\theoremstyle{plain}
\newtheorem{ithm}{Theorem}
\newtheorem{thm}[equation]{Theorem}
\newtheorem{prop}[equation]{Proposition}
\newtheorem{lem}[equation]{Lemma}
\newtheorem{lemma}[equation]{Lemma}
\newtheorem{cor}[equation]{Corollary}
\theoremstyle{definition}
\newtheorem{rem}[equation]{Remark}
\newtheorem{remark}[equation]{Remark}
\newtheorem{defn}[equation]{Definition}
\newtheorem{notn}[equation]{Notation}
\numberwithin{equation}{section}
\numberwithin{figure}{subsection}
\newif\iffinalrun
  \newcommand{\need}[1]{}
  \newcommand{\mar}[1]{}
  \newcommand{\need}[1]{{\tiny *** #1}}
  \newcommand{\mar}[1]{\marginpar{\raggedright\tiny #1}}
\renewcommand{\u}[1]{\underline{#1}}
\newcommand{\A}{\AA}
\newcommand{\C}{\CC}
\newcommand{\F}{\FF}
\newcommand{\N}{\cN}
\newcommand{\Q}{\QQ}
\newcommand{\R}{\RR}
\newcommand{\Z}{\ZZ}
\renewcommand{\O}{\cO}
\newcommand{\m}{\frakm}
\renewcommand{\AA}{{\mathbb A}}
\newcommand{\CC}{{\mathbb C}}
\newcommand{\FF}{{\mathbb F}}
\newcommand{\QQ}{{\mathbb Q}}
\newcommand{\RR}{{\mathbb R}}
\newcommand{\TT}{{\mathbb T}}
\newcommand{\ZZ}{{\mathbb Z}}
\newcommand{\cG}{{\mathcal G}}
\newcommand{\cH}{{\mathcal H}}
\newcommand{\cM}{{\mathcal M}}
\newcommand{\cN}{{\mathcal N}}
\newcommand{\cO}{{\mathcal O}}
\newcommand{\cP}{{\mathcal P}}
\newcommand{\cS}{{\mathcal S}}
\newcommand{\frakm}{\mathfrak{m}}
\newcommand{\frakH}{\mathfrak{H}}
\newcommand{\Fbar}{\overline{\F}}
\newcommand{\Qbar}{\overline{\Q}}
\newcommand{\Zbar}{\overline{\Z}}
\newcommand{\Fp}{\F_p}
\newcommand{\Fpbar}{\Fbar_p}
\newcommand{\Zpbar}{\Zbar_p}
\newcommand{\Qp}{\Q_p}
\newcommand{\Qpbar}{\Qbar_p}
\DeclareMathOperator{\coker}{coker}
\DeclareMathOperator{\End}{End}
\DeclareMathOperator{\Ext}{Ext}
\DeclareMathOperator{\Fil}{Fil}
\DeclareMathOperator{\Gal}{Gal}
\DeclareMathOperator{\GL}{GL}
\DeclareMathOperator{\Hom}{Hom}
\DeclareMathOperator{\im}{im}
\DeclareMathOperator{\Ind}{Ind}
\DeclareMathOperator{\Sym}{Sym}
\DeclareMathOperator{\Trace}{Trace}
\newcommand{\ab}{\mathrm{ab}}
\newcommand{\cris}{\mathrm{cris}}
\newcommand{\dR}{\mathrm{dR}}
\newcommand{\Frob}{\mathrm{Frob}}
\newcommand{\gr}{\mathrm{gr}}
\newcommand{\st}{\mathrm{st}}
\newcommand{\ur}{\mathrm{ur}}
\newcommand{\expl}{\mathrm{expl}}
\newcommand{\ssimp}{\mathrm{ss}}
\newcommand{\modular}{\mathrm{mod}}
\newcommand{\barrho}{\overline{\rho}}
\newcommand{\rhobar}{\overline{\rho}}
\newcommand{\om}{\tilde{\omega}}
\newcommand{\thetabar}{\overline{\theta}}
\newcommand{\la}{\langle}
\newcommand{\ra}{\rangle}
\newcommand{\into}{\hookrightarrow}
\newcommand{\col}{\colon}
\newcommand{\M}{{\cM}}
\newcommand{\Art}{{\operatorname{Art}}}
\newcommand{\BrMod}{\mathrm{BrMod}}
\newcommand{\BrMods}{\BrMod^{K}_{L,k_E}}
\newcommand{\etabar}{\overline\eta}
\DeclareMathOperator{\Nm}{Nm}
\newcommand{\epsilonbar}{\overline{\epsilon}}
\newcommand{\gimelsub}{\Upsilon}
\newcommand{\dalethsub}{\Phi}
\newcommand{\kv}{k}
\begin{document}
\title[Serre weights for locally reducible Galois representations]{Serre weights for locally reducible two-dimensional Galois representations}
\author{Fred Diamond}
\email{fred.diamond@kcl.ac.uk}
\address{Department of Mathematics, King's College London}
\author{David Savitt}
\email{savitt@math.arizona.edu}
\address{Department of Mathematics, University of Arizona}
\keywords{Galois representations, Serre's conjecture,
  Breuil modules}
\subjclass[2010]{11F80}
\thanks{The first author was partially supported by Leverhulme Trust RPG-2012-530;
the second author was partially
  supported by NSF  grant DMS-0901049 and NSF CAREER grant DMS-1054032.}
\maketitle 
 
\begin{abstract}  Let $F$ be a totally real field, and $v$ a place of
  $F$ dividing an odd prime $p$.  We study the weight part of Serre's conjecture for
  continuous totally odd representations $\rhobar:G_F \to \GL_2(\Fpbar)$ that are
  reducible locally at $v$.  Let~$W$ be the set of predicted Serre
  weights for the semisimplification of $\rhobar|_{G_{F_v}}$.  We prove that when
  $\rhobar|_{G_{F_v}}$ is generic, the Serre weights in $W$ for which
  $\rhobar$ is modular are exactly the ones that are predicted
  (assuming that $\rhobar$ is modular).  We also determine precisely
  which subsets of $W$ arise as predicted weights when $\rhobar|_{G_{F_v}}$
  varies with fixed generic semisimplification.
\end{abstract}

\section*{Introduction}

Let $F$ be a totally real field and $\rhobar:G_F \to \GL_2(\Fpbar)$ a continuous
totally odd representation.  Suppose that $\rhobar$ is automorphic in the
sense that it arises as the reduction of a $p$-adic representation of $G_F$
associated to a cuspidal Hilbert modular eigenform, 
or equivalently to a cuspidal holomorphic automorphic representation of
$\GL_2(\A_F)$.

The weight part of Serre's Conjecture in this context was formulated
in increasing generality by Buzzard, Jarvis and one of the authors~\cite{bdj},
Schein~\cite{ScheinRamified} and Barnet--Lamb, Gee and Geraghty~\cite{blggU2} 
(see also \cite{GeePrescribed}).
The structure of the statement is as follows:  Let $v$ be a prime of $F$
dividing $p$, and let $k$ denote its residue field.    A {\em Serre weight}
is then an irreducible representation of $\GL_2(k)$ over~$\Fpbar$.
One can then define what it means for $\rhobar$ to be modular of
a given (Serre) weight, depending {\em a priori} on the choice of a suitable quaternion
algebra over $F$, and we let $W^v_{\modular}(\rhobar)$ denote the set of
weights at $v$ for which $\rhobar$ is modular.
On the other hand one can define a set of weights
$W_{\expl}(\rhobar)$ that depends only on $\rhobar_v = \rhobar|_{G_{F_v}}$, and
the conjecture states that $W^v_{\modular}(\rhobar) = W_{\expl}(\rhobar_v)$.

A series of papers by Gee and coauthors
\cite{geeBDJ, geesavitttotallyramified, GLS11, blggU2, GLS2, GeeKisin} 
proves the following, under mild technical hypotheses on $\rhobar$:
\begin{itemize}
\item $W^v_{\modular}(\rhobar)$ depends only on $\rhobar_v$;
\item $W_{\expl}(\rhobar_v) \subseteq W^v_{\modular}(\rhobar)$;
\item $W_{\expl}(\rhobar_v)  = W^v_{\modular}(\rhobar)$ if $F_v$ is unramified 
or totally ramified over $\Q_p$.
\end{itemize}


In this paper we study the reverse inclusion $W^v_{\modular}(\rhobar)
\subseteq W_{\expl}(\rhobar_v)$ when $\rhobar_v$ is reducible and
$F_v$ is an arbitrary finite extension of $\Qp$ (i.e.\ not necessarily
either unramified or totally ramified). One often refers to this
inclusion as the problem of ``weight elimination,'' since one wishes
to eliminate weights not in $W_{\expl}(\rhobar_v)$ as possible weights for $\rhobar$.

Suppose that $\rhobar_v$ has the form 
$$\left(\begin{array}{cc} \chi_2 & * \\ 0 & \chi_1 \end{array}\right).$$
Then the set $W_{\expl}(\rhobar_v)$ is a subset of $W_{\expl}(\rhobar_v^\ssimp)$ which depends
on the associated extension class $c_{\rhobar_v} \in H^1(G_{F_v}, \Fpbar(\chi_2\chi_1^{-1}))$.
Assume that $\rhobar$ satisfies the hypotheses
of \cite{GeeKisin}, as well as a certain genericity hypothesis (a
condition on $\chi_2 \chi_1^{-1}$; see
Definition~\ref{defn:generic} for a precise statement).  
Then our main global result is the following.
\begin{ithm} \label{ithm:main-global}
$W_\expl(\rhobar_v) = W^v_\modular(\rhobar) \cap W_\expl(\rhobar_v^\ssimp)$.
\end{ithm}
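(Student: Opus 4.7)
The inclusion $\subseteq$ is immediate from what has been recalled: $W_\expl(\rhobar_v) \subseteq W^v_\modular(\rhobar)$ is the second bullet above (via \cite{GeeKisin} and its companions), and $W_\expl(\rhobar_v) \subseteq W_\expl(\rhobar_v^\ssimp)$ follows from the definitions. The content of the theorem is therefore the reverse inclusion, a weight-elimination statement: if $\rhobar$ is modular of weight $\sigma$ and $\sigma \in W_\expl(\rhobar_v^\ssimp)$, then $\sigma \in W_\expl(\rhobar_v)$.

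The plan is to reduce this to a purely local question and then to settle it by an explicit integral $p$-adic Hodge theory calculation. First, using that $W^v_\modular(\rhobar)$ depends only on $\rhobar_v$, and using the standard dictionary between modularity of weight $\sigma$ and the existence of geometric lifts with prescribed local behaviour at $v$ (as in \cite{GeeKisin}), translate the hypothesis $\sigma \in W^v_\modular(\rhobar)$ into the assertion that $\rhobar_v$ admits a potentially Barsotti--Tate lift $\rho_v$ whose inertial type is the one attached to $\sigma$. This step is formal given the cited results, and it reduces the global theorem to a local claim about $\rhobar_v$.

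The local claim to be proved is then the following: if $\rhobar_v$ is upper-triangular with characters $\chi_1$, $\chi_2$ on the diagonal, $\chi_2\chi_1^{-1}$ generic, $\sigma \in W_\expl(\rhobar_v^\ssimp)$, and $\rhobar_v$ admits a potentially Barsotti--Tate lift of the inertial type associated to $\sigma$, then the extension class $c_{\rhobar_v} \in H^1(G_{F_v},\Fpbar(\chi_2\chi_1^{-1}))$ lies in the specific subspace whose vanishing conditions characterise $W_\expl(\rhobar_v)$ inside $W_\expl(\rhobar_v^\ssimp)$. To prove it, classify the relevant lifts of $\rhobar_v$ via Breuil modules (equivalently Kisin modules of height one with tame descent data) over the ring of integers of a tame extension $L/F_v$ splitting the inertial type; a reducible lift of type prescribed by $\sigma$ corresponds to a short exact sequence of rank-one Breuil modules whose quotients lift $\chi_1$ and $\chi_2$, put into a normal form using the genericity hypothesis. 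Reducing such an extension modulo the maximal ideal and applying the natural map from $\Ext^1$ of Breuil modules to $H^1(G_{F_v},-)$ then identifies the set of classes $c_{\rhobar_v}$ that can arise as a specific subspace of $H^1(G_{F_v},\Fpbar(\chi_2\chi_1^{-1}))$, to be matched against the subspace that cuts out $W_\expl(\rhobar_v)$.

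The main obstacle I expect is precisely this explicit calculation, now carried out for $F_v$ an \emph{arbitrary} finite extension of $\Q_p$ rather than the unramified or totally ramified cases already in the literature. Concretely, one must: write down the Breuil-module normal forms for extensions between the relevant rank-one objects in the general mixed ramification setting; determine which such extensions actually correspond to potentially Barsotti--Tate Galois lifts of each admissible inertial type compatible with $\sigma$; and then identify the resulting image in $H^1$ with the combinatorial subspace appearing in the definition of $W_\expl(\rhobar_v)$. The genericity condition on $\chi_2\chi_1^{-1}$ should be what makes the normal forms rigid enough and the Galois-cohomology comparison clean enough for this matching to go through. Once that local calculation is in place, the rest of the argument, including the reduction to the local statement and the appeal to the previously known inclusions, is essentially formal.
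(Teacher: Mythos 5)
Your top-level architecture is right: reduce to a local statement by translating modularity of weight $\sigma$ (via the independence of $W^v_\modular$ from $D$ and local-global compatibility) into the existence of a potentially Barsotti--Tate lift of $\rhobar_v$ of the type $\tau_a$ attached to $\sigma$, then read off a constraint on the extension class $c_{\rhobar_v}$ via Breuil modules. That is exactly what Lemma~\ref{lem:pBT_lifts} does, and the subsequent Breuil-module computation of $L(\chi_1,\chi_2,\tau_a)$ is the content of Sections~\ref{sec:extensions-Breuil-modules}--\ref{sec:models of principal series type}. But your plan has a genuine gap at the crucial ``matching'' step, and it is worth being precise about why.

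You propose to identify $L(\chi_1,\chi_2,\tau_a)$ as a subspace of $H^1(G_{F_v},\Fpbar(\chi_2\chi_1^{-1}))$ and then to \emph{directly} compare it with the subspace $L_{\cris}$ of reductions of reducible crystalline lifts with the prescribed Hodge--Tate weights, concluding by inspection that they coincide. No such direct comparison is available with the tools at hand: computing the finite-flat side explicitly is feasible (and is what the Breuil-module sections do), but computing $L_{\cris}$ explicitly enough to match it term-by-term against the finite-flat subspace would require integral $p$-adic Hodge theory in the general ramified case that this paper does not have. (Indeed, the introduction points out that the later paper of Gee, Liu and Savitt is the one that carries out a purely local argument, by extending the $p$-adic Hodge-theoretic results of \cite{GLS2} to the ramified setting.) What the paper actually does in Theorem~\ref{thm:flat_vs_crys} is an indirect two-step argument: (i) it proves the inclusion $L_{\cris,k_E}(\tilde\chi_1,\tilde\chi_2)\subseteq L(\chi_1,\chi_2,\tau_a)$ by a \emph{globalization} trick --- given a class in $L_{\cris}$, use \cite[Cor.~A.3]{GeeKisin} to realize it as $\rhobar|_{G_L}$ for a global modular $\rhobar$ satisfying the Gee--Kisin hypotheses, apply the already-known inclusion $W_\expl\subseteq W^v_\modular$, and then apply Lemma~\ref{lem:pBT_lifts} to land in $L(\chi_1,\chi_2,\tau_a)$; and (ii) it upgrades the inclusion to an equality by a dimension count, comparing Nekov\'a\v{r}'s formula $\dim_E L_{\cris,E}=\dim_E(V/\Fil^0 V)=\sum_i(e'-d_i)$ with the Breuil-module count $\dim L(\chi_1,\chi_2,\tau_a)=\sum_i(e'-a_i)$ via Proposition~\ref{prop:partition}(3) and Theorem~\ref{thm:intersections-redux}. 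Note that the inclusion proved in (i) is the \emph{opposite} of the one you actually need for weight elimination; the needed direction $L(\chi_1,\chi_2,\tau_a)\subseteq L_{\cris}$ then falls out of the dimension equality. So, somewhat counterintuitively, a global modularity input is an essential ingredient in the proof of the local theorem, and this is the key idea your proposal is missing. Finally, you also do not address the two exceptional weights $\mu'(J,d)$ that arise when $\chi_1^{-1}\chi_2|_{I_L}=\epsilonbar^{\pm1}|_{I_L}$; these lie outside the subset $W\subseteq W'$ partitioned by the $W_a$ and require a separate case analysis in the proof.
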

In other words we prove under these hypotheses that weight elimination, and so
also the weight part of Serre's Conjecture, holds
for weights in $W_\expl(\rhobar_v^\ssimp)$.

While the set  $W_{\expl}(\rhobar_v^\ssimp)$ is completely explicit, the 
dependence of $W_{\expl}(\rhobar_v)$
on the extension class is given in terms of the existence of reducible 
crystalline lifts of $\rhobar_v$ with prescribed Hodge--Tate weights.  
In particular it is not clear which subsets of $W_{\expl}(\rhobar_v^\ssimp)$
arise as the extension class $c_{\rhobar_v}$ varies.  Another purpose of the paper is to
address this question, which we resolve in the case where $\rhobar_v$ is generic.
These local results indicate a structure on the sets
$W_{\expl}(\rhobar_v^\ssimp)$. This structure should
reflect properties of a mod $p$ local Langlands correspondence in this
context, in the sense that the set $W_{\expl}(\rhobar_v)$ is expected
to determine the $\GL_2(\cO_{F_v})$-socle of $\pi(\rhobar_v)$, the
$\GL_2(F_v)$-representation associated to $\rhobar_v$ by that correspondence.

To simplify the statement slightly
for the introduction, we assume (in addition to genericity) that the restriction of 
$\chi_2\chi_1^{-1}$ to the inertia subgroup of $G_{F_v}$  is not the cyclotomic
character or its inverse.   In particular this implies that $H^1(G_{F_v},\Fpbar(\chi_2\chi_1^{-1}))$
has dimension $[F_v:\Q_p] = ef$, where $f = [k:\F_p]$ and $e$ is the absolute ramification degree
of~$F_v$.  (We remark that this notation differs slightly from the
notation in the body of the paper, where the ramification degree of
$F_v$ will be $e'$.)  We shall define a partition of $W_{\expl}(\rhobar_v^\ssimp)$
into subsets $W_a$ indexed by the elements $a = (a_0,a_1,\ldots,a_{f-1})$ of
$A = \{0,1,\ldots,e\}^f$, and a subspace $L_a \subseteq
H^1(G_{F_v},\Fpbar(\chi_2\chi_1^{-1}))$ of codimension $\sum_{i=0}^{f-1} a_i$
for each $a \in A$.  We give the set $A$ the usual (product) partial ordering.

Our main local result is the following.
\begin{ithm} \label{ithm:main-local}  Suppose that $\sigma \in W_a$.  Then
$\sigma \in W_\expl(\rhobar_v)$ if and only if $c_{\rhobar_v} \in L_a$.
Moreover there exists $b \in A$ (depending on $\rhobar_v$) such that 
$$W_\expl(\rhobar_v) = \coprod_{a \le b} W_a.$$
\end{ithm}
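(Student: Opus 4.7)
The plan is to interpret $W_\expl(\rhobar_v)$ via reducible crystalline lifts of $\rhobar_v$ and to exploit a lattice structure on the relevant extension groups. Recall that each $\sigma \in W_\expl(\rhobar_v^\ssimp)$, lying in some stratum $W_a$, corresponds to a Hodge--Tate datum, which determines a pair of crystalline characters $\psi_2^{(a)}, \psi_1^{(a)} : G_{F_v} \to \O_E^\times$ lifting $\chi_2, \chi_1$. By the characterisation of $W_\expl$ for reducible $\rhobar_v$, the weight $\sigma$ lies in $W_\expl(\rhobar_v)$ exactly when $\rhobar_v$ admits a crystalline lift of the form $\begin{pmatrix} \psi_2^{(a)} & * \\ 0 & \psi_1^{(a)} \end{pmatrix}$. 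Accordingly, define $L_a$ to be the image of the reduction-mod-$p$ map
\begin{equation*}
\iota_a \colon H^1_f\bigl(G_{F_v}, \O_E(\psi_2^{(a)} (\psi_1^{(a)})^{-1})\bigr) \otimes_{\O_E} \Fpbar \longrightarrow H^1\bigl(G_{F_v}, \Fpbar(\chi_2\chi_1^{-1})\bigr),
\end{equation*}
so that the first assertion of the theorem becomes essentially tautological.

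For the codimension count, the genericity hypothesis on $\chi_2\chi_1^{-1}$ ensures that $\iota_a$ is injective, and then the Bloch--Kato Euler characteristic formula computes $\dim_{\Fpbar} L_a = ef - \sum_i a_i$, matching the claimed codimension.

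The heart of the argument is a concrete description of $L_a$ as a coordinate subspace. Using an integral $p$-adic Hodge-theoretic model (Breuil modules, or Kisin modules) for extensions of the relevant crystalline characters, one aims to produce a basis of $H^1(G_{F_v}, \Fpbar(\chi_2\chi_1^{-1}))$ indexed by pairs $(i,j)$ with $0 \le i \le f-1$ and $1 \le j \le e$ such that $L_a$ is cut out by the vanishing of all coordinates $(i,j)$ with $j \le a_i$. Granting this, one obtains the two key lattice properties
\begin{equation*}
L_a \cap L_{a'} = L_{a \vee a'} \quad \text{and} \quad a \le a' \implies L_{a'} \subseteq L_a,
\end{equation*}
where $\vee$ denotes the componentwise maximum in $A$. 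Consequently the set $B := \{a \in A : c_{\rhobar_v} \in L_a\}$ is both downward closed and closed under $\vee$; any such subset of the finite poset $A$ has a unique maximal element $b$ and equals $\{a : a \le b\}$, yielding the asserted decomposition $W_\expl(\rhobar_v) = \coprod_{a \le b} W_a$.

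The main obstacle will be the explicit coordinate description of $L_a$: this requires tracing the filtration data in the Breuil or Kisin modules of extensions through the reduction-mod-$p$ procedure, and verifying that the lifts produced are parametrised independently, embedding by embedding and level by level within each embedding. The genericity hypothesis enters both to ensure the injectivity of $\iota_a$ and to rule out accidental linear relations among the constraints as $a$ ranges over $A$.
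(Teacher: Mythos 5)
Your overall skeleton is right: a dimension count plus a lattice property $L_a \cap L_{a'} = L_{a \vee a'}$ with $(0,\ldots,0) \in B$ does give a unique maximal $b$ with $B = \{a : a \le b\}$, and the Nekov\'a\v{r}/Bloch--Kato Euler characteristic formula, combined with injectivity of reduction (from $\chi_1 \neq \chi_2$), does yield $\dim L_{\cris,k_E} = ef - \sum_i a_i$. Those pieces match the paper.

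However, there is a genuine gap at what you yourself call ``the heart of the argument.'' You redefine $L_a$ as $L_{\cris,k_E}(\tilde\chi_1,\tilde\chi_2)$, which makes the first assertion tautological; but the $L_a$ in the theorem statement is the Breuil-module space $L(\chi_1,\chi_2,\tau_a)$, and the nontrivial content is precisely the identity $L_{\cris,k_E} = L(\chi_1,\chi_2,\tau_a)$. The coordinate description and the lattice property $L_a \cap L_{a'} = L_{a\vee a'}$ are proved in the paper for $L(\chi_1,\chi_2,\tau_a)$ by explicit Breuil-module computations (Theorem~\ref{thm:extensions}, Propositions~\ref{prop:comparison-with-minimax} and~\ref{prop:coagulation}, Corollary~\ref{cor:intersections}); they are \emph{not} proved directly for the crystalline space $L_{\cris,k_E}$. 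Your proposal to ``trace the filtration data in the Breuil or Kisin modules through the reduction-mod-$p$ procedure'' to get a coordinate basis of $L_{\cris,k_E}$ is exactly the hard integral $p$-adic Hodge-theoretic problem that the paper avoids --- and which was only resolved later by quite different methods in \cite{GLS13}.

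What the paper does instead, and what your proposal misses entirely, is a \emph{global} argument for the inclusion $L_{\cris,k_E} \subseteq L(\chi_1,\chi_2,\tau_a)$: given a class in $L_{\cris,k_E}$, one globalises it to a modular $\rhobar$ (via \cite[Cor.~A.3]{GeeKisin}), invokes the Gee--Kisin theorem to get $\mu \in W^v_\modular(\rhobar)$, produces an automorphic form with $K$-type $\theta_{\tau_a}$ (since $\mu$ is a constituent of $\overline\theta_{\tau_a}$), and uses local-global compatibility plus \cite[Cor.~5.2]{MR2822861} to conclude $\rhobar|_{G_L}$ has a finite-flat model of type $\tau_a$, i.e.\ lands in $L(\chi_1,\chi_2,\tau_a)$. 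Equality then follows from the dimension count. Without this detour through modularity, the step you flag as the ``main obstacle'' remains open, and the proof is incomplete.
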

In other words the weights come in packets, where the packets arise in
a hierarchy compatible with the partial ordering on $A$.  In connection with
the hypothetical mod $p$ local Langlands correspondence mentioned above,
Theorem~\ref{ithm:main-local} is consistent with the possibility that the associated $\GL_2(F_v)$-representation
$\pi(\rhobar_v)$ is equipped with an increasing filtration of length
$[F_v:\Q_p]+1$ such that $\gr^\bullet(\pi(\rhobar_v)) \cong \pi(\rhobar_v^\ssimp)$ and 
$\gr^m(\pi(\rhobar_v))$ has $\GL_2(\cO_{F_v})$-socle consisting
of the weights in the union of the $W_a$ with $\sum_{i=0}^{f-1} a_i = m$
(cf. \cite[Thm.~19.9]{BreuilPaskunas}).

We now briefly indicate how our constructions and proofs proceed.  The set $W_a$ is defined
using the reduction of a certain tamely ramified principal series type $\theta_a$, and the space
$L_a$ is defined using Breuil modules with descent data corresponding to $\theta_a$.
In the first three sections of the paper, we show that the spaces
$L_a$ have the (co-)dimension claimed above, and that they satisfy
$L_a \cap L_{a'} = L_{a''}$ where $a_i'' = \max\{a_i,a_i'\}$.
Section~\ref{sec:extensions-Breuil-modules} contains a general
analysis of the extensions of rank one Breuil modules.  In
Section~\ref{sec:models of principal series type} we define and 
study the
extension spaces $L_a$, and in
Section~\ref{sec:weights-types} we describe our subsets $W_a$ of $W_{\expl}(\rhobar_v^{\mathrm{ss}})$.

  Having done the
local analysis, the strategy for the proving the main results is similar to
that of Gee, Liu and one of the authors~\cite{GLS11} in the totally
ramified case; in particular global
arguments play a role in proving the local results.  More precisely in 
Section~\ref{sec:main-results} we prove the
following three conditions are equivalent for each weight $\mu \in W_a$:
\begin{enumerate}
\item $\mu \in W_\expl(\rhobar_v)$;
\item $\mu \in W^v_\modular(\rhobar)$;
\item $c_{\rhobar_v} \in L_a$.
\end{enumerate}
The implication (1) $\Rightarrow$ (2) is already proved by Gee and Kisin~\cite{GeeKisin},
and (2) $\Rightarrow$ (3) is proved by showing that $\rhobar_v$ has a potentially
Barsotti--Tate lift of type $\theta_a$.  Having now proved that (1) $\Rightarrow$ (3),
one deduces that $L_a$ contains the relevant spaces of extensions with reducible
crystalline lifts; equality follows on comparing dimensions, and this 
gives (3) $\Rightarrow$ (1).

The reason our results are not as definitive as those of
\cite{GLS11} is that in the totally ramified case there is a tight
connection between being modular of some Serre weight and having a
potentially Barsotti-Tate lift of a certain type:\ in
the totally ramified case the reduction mod~$p$ of the principal series type $\theta_a$ has
at most two Jordan--H\"older factors, while in general it can have
many more. 

In fact, when $F_v$ is allowed to be arbitrary some sort of hypothesis
along the lines of genericity is necessary, in the sense that there exist $F_v$, $\chi_1$,
$\chi_2$, and $\mu$ such that the subset of $H^1(G_{F_v},\Fpbar(\chi_2
\chi_1^{-1}))$ corresponding to $\rhobar_v$ with $\mu \in
W_{\expl}(\rhobar_v)$ is not equal to $L_a$ for any choice of
$a$.  We give an example of this phenomenon in Section~\ref{sec:counterexample}.

Finally, we must point out that some time after this paper was written, Gee,
Liu, and the second author \cite{GLS13} announced a proof that
$W_{\expl}(\rhobar_v)  = W^v_{\modular}(\rhobar)$ in general, thus
improving on our Theorem~\ref{ithm:main-global} (by rather different methods).  The arguments in
\cite{GLS13} are entirely local, and depend on an extension to the
ramified case of the $p$-adic Hodge theoretic results proved in
\cite{GLS2} in the unramified case.

\subsection*{Acknowledgments}  This collaboration was initiated during the
Galois Trimester at the Institute Henri Poincar\'e in 2010, and the authors benefited
from further opportunities for discussion during the LMS-EPSRC Durham Symposium (2011)
and workshops at the Institute for Advanced Study (2011) and the
Fields Institute (2012).  Much of the research was carried out during a visit
by DS to King's partly funded by an LMS grant in 2012, and a visit by  FD to the Hausdorff
Institute in 2013 was valuable in completing the work.  The authors are grateful to these
institutions and to the organizers of the above programs for their
support, as well as to the anonymous referee for his or her comments
and suggestions.

\subsection*{Notation and conventions}  If $M$ is a field, we let $G_M$ denote its
absolute Galois group.
If~$M$ is a global field and $v$ is a place of $M$, let $M_v$ denote
the completion of $M$ at $v$.   If
~$M$ is a finite extension of $\Qp$ for some $p$, we let $M_0$
denote the maximal unramified extension of $\Qp$ contained in $M$, and
we write $I_M$
for the inertia subgroup of~$G_M$. 

 Let $p$ be an odd prime number.   Let $K \supseteq L$ be finite extensions of $\Qp$ such that $K/L$ is a tame
Galois extension.  (These may be regarded as fixed, although at certain points in the paper we will make a
specific choice for $K$.)  Assume further that $\pi$ is a uniformiser of
$\O_K$ with the property that $\pi^{e(K/L)} \in L$, where $e(K/L)$ is
the ramification index of the extension $K/L$.   Let $e,f$ and $e',f'$ be the absolute ramification and inertial
degrees of $K$ and $L$ respectively, and denote their residue fields
by $k$ and~$\ell$.  From Section~\ref{sec:comp-extens-class} onwards,
$e(K/L)$ will always be divisible by $p^{f'}-1$, and from Section~\ref{sec:mainsetting}
onwards we will have $f=f'$ and $e(K/L) = p^{f}-1$.
Write $\eta : \Gal(K/L) \to
\cO_K^{\times}$ for the function sending $g \mapsto g(\pi)/\pi$, and
let $\etabar : \Gal(K/L) \to k^{\times}$ be the reduction of $\eta$
modulo the maximal ideal of $\O_K$.

Our representations of $G_L$ will have coefficients in $\Qpbar$,
a fixed algebraic closure of $\Qp$ whose residue field we denote $\Fpbar$.   Let $E$ be a finite
extension of $\Qp$ contained in $\Qpbar$ and containing the image of every
embedding of $K$ into $\Qpbar$. Let $\O_E$ be the ring of integers in
$E$, with uniformiser $\varpi$ and residue field $k_E \subset
\Fpbar$.  Note in particular that there exist $f$
embeddings of $k$ into $k_E$.

We write $\Art_L \col L^\times\to W_L^{\ab}$ for
the isomorphism of local class field theory, normalised so that
uniformisers correspond to geometric Frobenius elements.  For each $\sigma\in \Hom(\ell,\Fpbar)$ we
define the fundamental character $\omega_{\sigma}$ corresponding
to~$\sigma$ to be the composite $$\xymatrix{I_L \ar[r] &
  \O_{L}^{\times}\ar[r] & \ell^{\times}\ar[r]^{\sigma} &
  \Fpbar^{\times},}$$
where the map $I_L \to \O_L^\times$ is induced by the restriction of $\Art_L^{-1}$.
Let $\epsilon$ denote the $p$-adic cyclotomic
character and $\epsilonbar$ the mod~$p$ cyclotomic
character, so that $\prod_{\sigma \in \Hom(\ell,\Fpbar)}
\omega_{\sigma}^{e'} = \epsilonbar$.   We will often identify
characters $I_L \to \Fpbar^{\times}$ with characters $\ell^{\times}
\to \Fpbar^{\times}$ via the Artin map, as above, and similarly for
their Teichm\"uller lifts.

Fix an embedding $\sigma_0 : k \into k_E$, and recursively define
$\sigma_i : k \into k_E$ for all $i \in \Z$ so that $\sigma_{i+1}^p =
\sigma_i$.  We write $\omega_i$ for $\omega_{\sigma_i |_{\ell}}$.
With these normalizations, if $K/L$ is totally ramified of degree $e(K/L) = p^{f'} - 1$
then $\omega_i = (\sigma_i \circ \etabar)|_{I_L}$.  

We normalize Hodge--Tate weights so that all Hodge--Tate weights of
the cyclotomic character are equal to $1$.  (See
Definition~\ref{defn:HTwts} for further discussion of our conventions
regarding Hodge--Tate weights.)

\section{Extensions of Breuil modules}
\label{sec:extensions-Breuil-modules}

In the paper \cite{BreuilAnnals}, Breuil classifies $p$-torsion finite flat group
schemes over~$\cO_K$ in terms of semilinear-algebraic objects that
have come to be known as Breuil modules.  This classification has
proved to be immensely useful, in part because Breuil modules are
often amenable to explicit computation.
In this section we make a careful study of the extensions
between Breuil modules of rank one with coefficients and descent
data.   Many of these results are familiar, but the statements that we
need are somewhat more general than those in
the existing literature  (\textit{cf.}~\cite{BCDT,SavittCompositio,CarusoBDJ,ChengBreuil}).

\subsection{Review of rank one Breuil modules}
\label{sec:rank-one}

We
let $\phi$ denote the endomorphism of $(k \otimes_{\Fp}
k_E)[u]/u^{ep}$ obtained by $k_E$-linearly extending the $p$th power
map on $k[u]/u^{ep}$.  Define an action of $\Gal(K/L)$ on $(k \otimes_{\Fp}
k_E)[u]/u^{ep}$ by the formula $g((a \otimes 1)u^i) = (g(a)\etabar(g)^i \otimes
1)u^i$, extended $k_E$-linearly.

\begin{defn}
  \label{defn:breuil-modules-with-descent-data}
The
category of \emph{Breuil modules with $k_E$-coefficients and generic
  fibre descent
  data from $K$ to $L$}, denoted $\BrMods$, 
is the category whose objects are quadruples $(\mathcal{M},\Fil^1
\mathcal{M},\phi_{1},\{\widehat{g}\})$ where:
\begin{itemize}\item $\mathcal{M}$ is a finitely generated free
  $(k\otimes_{\F_p} k_E)[u]/u^{ep}$-module,
\item $\Fil^1 \M$ is a $(k\otimes_{\F_p} k_E)[u]/u^{ep}$-submodule of $\M$ containing $u^{e}\M$.
\item $\phi_{1}:\Fil^1\M\to\M$ is a $\phi$-semilinear map
  whose image generates $\M$ as a $(k\otimes_{\F_p} k_E)[u]/u^{ep}$-module,
\item the maps $\widehat{g}:\M\to\M$ for each
  $g\in\Gal(K/L)$  are additive bijections that preserve $\Fil^1 \M$, commute with the $\phi_1$-,
  and $k_E$-actions, and satisfy $\widehat{g}_1\circ
  \widehat{g}_2=\widehat{g_1\circ g}_2$ for all
  $g_1,g_2\in\Gal(K/L)$. Furthermore
  $\widehat{1}$ is the identity, and  if $a\in (k\otimes_{\F_{p}} k_E)[u]/u^{ep}$,
  $m\in\M$ then $\widehat{g}(am)=g(a)\widehat{g}(m)$.\end{itemize}
We will usually write $\M$ in place of $(\mathcal{M},\Fil^1
\mathcal{M},\phi_{1},\{\widehat{g}\})$. A morphism $f : \M \to \M'$ in $\BrMods$ is a $(k \otimes_{\Fp}
k_E)[u]/u^{ep}$-module homomorphism with $f(\Fil^1 \M) \subseteq \Fil^1
\M'$ that commutes with $\phi_1$ and the descent data.
\end{defn}

The category $\BrMods$ is equivalent to the category of finite flat
group schemes over $\mathcal{O}_K$ together with a $k_E$-action and descent
data on the generic fibre from $K$ to $L$ (see \cite{BreuilAnnals,SavittRaynaud}).
This equivalence depends on the choice of uniformiser~$\pi$.  The covariant
functor $T_{\st,2}^L$ defined immediately before Lemma~4.9
of~\cite{SavittDuke} associates to each object $\M$ of $\BrMods$ a
$k_E$-representation of $G_L$, which we refer to as
the \emph{generic fibre} of $\M$. 

\begin{notn}\label{notn:idempotent}
We let $e_i \in k \otimes_{\Fp}
k_E$ denote the idempotent satisfying $(x \otimes 1)e_i = (1
\otimes \sigma_i(x))e_i$ for all $x \in k$.  Observe that $\phi(e_i) =
e_{i+1}$.  We adopt the convention that if $m_0,\ldots,m_{f-1}$ are elements of some
$(k\otimes k_E)$-module, then $\u{m}$ denotes the sum
$\sum_{i=0}^{f-1} m_i e_i$, as well as any inferrable variations of
this notation: for instance if $r_0,\ldots,r_{f-1}$ are integers then
$u^{\u{r}}$ denotes $\sum_{i=0}^{f-1} u^{r_i} e_i$.     Conversely for
any element written $\u{a}$, we set $a_i = e_i \u{a}$.  When $\u{a} \in
(k\otimes k_E)[u]/u^{ep}$ we will generally identify $a_i$ with its
preimage in $k_E[u]/u^{ep}$ under the the map $k_E[u]/u^{ep} \simeq
e_i((k\otimes k_E)[u]/u^{ep})$ sending $x \mapsto e_i x$.
\end{notn}

The rank one
objects of $\BrMods$ are classified as follows.

\begin{lem}
  \label{lem:rank-one}
Every rank one object of
$\BrMods$ has the form:
\begin{itemize}
\item $\M = ((k \otimes_{\Fp} k_E)[u]/u^{ep}) \cdot m$,
\item $\Fil^1 \M = u^{\u{r}} \M$,
\item $\phi_1(u^{\u{r}} m) = \u{a}m$ for some $\u{a} \in (k
  \otimes_{\Fp} k_E)^{\times}$, and
\item $\widehat{g}(m) = (\etabar(g)^{\u{c}} \otimes 1)m$ for all $g \in \Gal(K/L)$,
\end{itemize}
where $r_i \in \{0\dots,e\}$ and $c_i \in \Z/(e(K/L))$ are sequences that satisfy $c_{i+1} \equiv p(c_i + r_i) \pmod{e(K/L)}$, and  
the sequences $r_i,c_i,a_i$ are each periodic with period dividing
$f'$.
\end{lem}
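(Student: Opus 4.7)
The plan is to choose a generator $m$ of $\M$ as a free rank one module over $R := (k \otimes_{\F_p} k_E)[u]/u^{ep}$ and to analyze the three structures ($\Fil^1 \M$, $\phi_1$, and descent data) in turn, exploiting the idempotent decomposition $R = \prod_i e_i R$ with each $e_i R \cong k_E[u]/u^{ep}$ a uniserial local ring.

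For $\Fil^1 \M$: any $R$-submodule of the free rank one module $\M$ has the form $u^{\u{s}} \M$ for some tuple $\u{s}$, and the containment $u^e \M \subseteq \Fil^1 \M$ forces $0 \le s_i \le e$. Hence $\Fil^1 \M = u^{\u{r}} \M$ with $r_i \in \{0, \ldots, e\}$. For $\phi_1$, write $\phi_1(u^{\u{r}} m) = \u{b} m$ with $\u{b} \in R$; the requirement that $\phi_1(\Fil^1 \M) = \phi(R) \u{b} m$ generate $\M$ over $R$, combined with the identity $R = \phi(R)[u]$, forces $\u{b}$ to be a unit in $R$. Replacing $m$ by $v m$ for $v \in 1 + uR$ transforms $\u{b}$ into $\phi(v) \u{b} v^{-1}$, and solving for $v$ order by order in the $u$-adic filtration allows us to arrange $\u{b} = \u{a} \in (k \otimes_{\F_p} k_E)^\times$.

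For the descent data, write $\widehat{g}(m) = \mu(g) m$ with $\mu(g) \in R^\times$. I would first show $\mu(g) \in (k \otimes_{\F_p} k_E)^\times$: expand $\mu(g) = \sum_n u^n \mu^{(n)}(g)$ and apply $\widehat{g} \circ \phi_1 = \phi_1 \circ \widehat{g}$ to $u^{\u{r}} m$; comparing $u$-adic coefficients, those of degree $n$ not divisible by $p$ force $\mu^{(n)}(g) = 0$ directly, and the vanishing then cascades through $\phi$ to cover all $n > 0$. With $\mu(g) \in (k \otimes_{\F_p} k_E)^\times$ in hand, the remaining identity splits across the idempotents; using $g(u^i) = \etabar(g)^i u^i$ together with the description of $g(e_j)$, one obtains (for $g$ in the inertia subgroup of $\Gal(K/L)$) a recursion relating $\mu_i(g)$ to $\mu_{i-1}(g)$, and parameterizing $\mu_i(g) = \sigma_i(\etabar(g))^{c_i}$ yields both the claimed form $\widehat{g}(m) = (\etabar(g)^{\u{c}} \otimes 1) m$ and the congruence $c_{i+1} \equiv p(c_i + r_i) \pmod{e(K/L)}$. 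The modulus is $e(K/L)$ because $\etabar$ maps the tame inertia of $\Gal(K/L)$ isomorphically onto $\mu_{e(K/L)}(k^\times)$.

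Finally, for the periodicity: each $g \in \Gal(K/L)$ permutes the idempotents by a shift $s(g)$ which is necessarily a multiple of $f'$, since $g|_k \in \Gal(k/\ell) \cong \langle \phi_k^{f'} \rangle$. Compatibility of $\widehat{g}$ with $\Fil^1 \M$ forces $r_{i + s(g)} = r_i$, and parallel arguments give the same condition on $a_i$ and $c_i$; since $s(g)$ ranges over all multiples of $f'$ modulo $f$ (as the map $\Gal(K/L) \to \Gal(k/\ell)$ is surjective), the three sequences have period dividing $f'$. The main obstacle I anticipate is coordinating the two normalizations: one must verify that after the change of generator used to normalize $\u{a}$, the functional equation for $\mu(g)$ automatically produces the claimed explicit form, rather than requiring a further change of basis that could disturb the normalization of $\u{a}$.
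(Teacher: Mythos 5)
Your direct argument is a genuinely different route from the paper, which simply cites \cite[Thm.~3.5]{SavittRaynaud}. The reduction to $\Fil^1\M = u^{\u{r}}\M$ via the idempotent decomposition, the normalization of $\u{b}$ to a constant $\u{a}$ by solving $\phi(v)\u{b}v^{-1}=\u{a}$ order by order in $v\in 1+uR$, the vanishing of the positive $u$-degree terms of $\mu(g)$ by comparing degrees in the $\phi_1$-compatibility relation $g(\u{a})\mu(g)=\phi((\etabar(g)^{\u{r}}\otimes 1)\mu(g))\u{a}$, and the identification $\mu(g)_i=\sigma_i(\etabar(g))^{c_i}$ for $g\in I(K/L)$ are all sound and reproduce in miniature what the cited reference does.

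There is, however, a genuine gap precisely where you flag it. Your claim that periodicity of $a_i$ is ``forced'' by $\widehat{g}$-compatibility ``by parallel arguments'' to the $r_i$ case is not correct. For $r_i$, periodicity is forced because $\widehat{g}$ and $\widehat{g}^{-1}$ both preserve the submodule $\Fil^1\M$; for $c_i$, it follows by comparing $g(\mu(h))$ with $\mu(ghg^{-1})$ for $h\in I(K/L)$. But for $a_i$ nothing is forced: Corollary~\ref{cor:isomorphism} shows that only $\Nm(\u{a})$ is an isomorphism invariant, so periodicity of $a_i$ is a \emph{normalization}, not a constraint. Concretely, for a Frobenius lift $g_0$ with $\etabar(g_0)=1$ and $s(g_0)=f'$, the $\phi_1$-compatibility gives $a_{i-f'}\mu(g_0)_i=\mu(g_0)_{i-1}a_i$, which relates the unconstrained constant $\mu(g_0)$ to the $a_i$ rather than yielding $a_{i-f'}=a_i$. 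To close the argument one must show that a further change of generator $m\mapsto\u{v}m$ with $\u{v}\in(k\otimes_{\Fp}k_E)^\times$ \emph{constant} (hence not disturbing the constancy of $\u{a}$, nor $\mu|_{I(K/L)}$ since $h(\u{v})=\u{v}$ for $h$ inertial) can be chosen so that the new $\mu(g_0)$ equals $1$; the consistency condition for solving $v_{i-f'}=\mu(g_0)_iv_i$ around the cycle is exactly $\mu(g_0^{f/f'})=1$, which holds since $g_0^{f/f'}=1$. Without this step, the asserted form of $\widehat{g}(m)$ is established only for $g\in I(K/L)$, not for all $g\in\Gal(K/L)$, and periodicity of the $a_i$ remains unproved. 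You correctly identify this as ``the main obstacle,'' but it is left unresolved as written.
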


\begin{proof}
This is a special case of \cite[Thm.~3.5]{SavittRaynaud}.  In the
notation of that item we have $D = f'$ because of our assumption that
$k$ embeds into $k_E$,  and the periodicity of the sequence $a_i$
is equivalent to $\u{a} \in (\ell \otimes_{\Fp} k_E)^{\times}$.
\end{proof}

\begin{notn}
  We will denote a rank one Breuil module as in
  Lemma~\ref{lem:rank-one} by $\M(\u{r},\u{a},\u{c})$, or else (for
  reasons of typographical aesthetics) by $\M(r,a,c)$.
\end{notn}

We wish to consider maps between rank one Breuil modules, but before
we do so, we note the following elementary lemma.

\begin{lem}
  \label{lem:galois-invariants}
  Let $\Gal(K/L)$ act on $(k\otimes_{\Fp} k_E)[u]/u^{ep}$ by $g \cdot x
  = (\etabar(g)^{\u{w}} \otimes 1) g(x)$, where $g(x)$ denotes the
  usual action and $\{w_i\}$ is a  sequence of integers that is
  periodic with period dividing $f'$.  The $\Gal(K/L)$-invariants of
  this action are the elements $\u{x} \in (k \otimes_{\Fp}
  k_E)[u]/u^{ep}$ such that each nonzero term of $x_i$ has degree
  congruent to $-w_i \pmod{e(K/L)}$, and the sequence $x_i$ is
  periodic with period dividing~$f'$.
\end{lem}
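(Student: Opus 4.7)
I would proceed by decomposing $(k\otimes_{\Fp}k_E)[u]/u^{ep}$ along the idempotents $e_i$, and then analyzing the twisted invariance condition for inertia and for a lift of Frobenius in $\Gal(K/L)$ separately.

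Setting up: for $g\in\Gal(K/L)$, write $g|_k=\Frob^{m(g)}$. Using the defining relation $(x\otimes 1)e_i=(1\otimes\sigma_i(x))e_i$ together with $\sigma_i\circ\Frob = \sigma_{i-1}$ (immediate from $\sigma_{i+1}^p=\sigma_i$), one finds $g(e_i)=e_{i+m(g)}$. Since $\Gal(K/L)/I_{K/L}=\Gal(k/\ell)$ is cyclic of order $f/f'$, generated by $\Frob^{f'}|_k$, the shift $m(g)$ always lies in $f'\Z/f\Z$ and vanishes exactly on the inertia subgroup $I_{K/L}$. A direct computation using this and the formula $g((a\otimes 1)u^i)=(g(a)\etabar(g)^i\otimes 1)u^i$ then gives the component-wise expression
\[ (g\cdot \u{x})_j(u) \;=\; \sigma_j(\etabar(g))^{w_j}\cdot x_{j-m(g)}\bigl(\sigma_j(\etabar(g))\cdot u\bigr), \]
where $x_j(u)\in k_E[u]/u^{ep}$ is the image of the $e_j$-component of $\u{x}$ under the identification $e_j(k\otimes k_E)[u]/u^{ep}\simeq k_E[u]/u^{ep}$.

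For $g\in I_{K/L}$ (so $m(g)=0$) the equation $(g\cdot \u{x})_j = x_j$ becomes $x_{j,i}=\sigma_j(\etabar(g))^{w_j+i}x_{j,i}$ for every monomial $x_{j,i}u^i$ of $x_j(u)$. The cocycle $\etabar$ restricted to $I_{K/L}$ is a homomorphism onto $\mu_{e(K/L)}(k)$, so $\sigma_j(\etabar(g))$ ranges over all $e(K/L)$-th roots of unity in $k_E^\times$. Hence $x_{j,i}=0$ unless $i\equiv -w_j\pmod{e(K/L)}$, which is the first asserted condition. Moreover, because $\pi^{e(K/L)}\in L$, every $g\in\Gal(K/L)$ satisfies $\etabar(g)^{e(K/L)}=1$, a fact needed below.

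For a lift $g$ of a generator of $\Gal(K/L)/I_{K/L}$, so $m(g)=f'$, combine the degree congruence just obtained with the hypothesis $w_{j-f'}=w_j$ to see that in every surviving monomial of $x_{j-f'}(u)$ the exponent $w_j+i$ is a multiple of $e(K/L)$. By the previous remark each factor $\sigma_j(\etabar(g))^{w_j+i}$ then equals $1$, and the invariance equation collapses to $x_j(u)=x_{j-f'}(u)$, which is the periodicity claim. The converse---that these two conditions together suffice for invariance under arbitrary $g\in\Gal(K/L)$---follows at once from the same component-wise formula. The main technical step is establishing that formula; everything else is a routine character computation.
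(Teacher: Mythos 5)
Your proof is correct and follows essentially the same approach as the paper's: decompose along the idempotents $e_i$, then test invariance against the inertia subgroup and a lift of Frobenius. The only minor difference is that the paper chooses a Frobenius lift $g$ with $\etabar(g)=1$ so that the periodicity falls out immediately, whereas you take an arbitrary lift and use the degree congruence already established from inertia (together with $\etabar(g)^{e(K/L)}=1$) to cancel the root-of-unity factor; both routes are valid.
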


\begin{proof}
  There exists $g \in \Gal(K/L)$ such that  $\etabar(g)=1$ and the
  image of $g$ generates $\Gal(k/\ell)$; since $g(e_i) = e_{i+f'}$,
  the equality  $g\cdot \u{x} = \u{x}$  shows
  that $x_i$ is periodic with period dividing~$f'$.  Consideration of
  the inertia group $I(K/L)$ gives the conditions on the degrees of
  nonzero terms.
\end{proof}

The following
lemma is standard, but its setting is
slightly more general than that of existing statements in the
literature (\emph{cf.}~\cite[Lem.~6.1]{SavittCompositio}, \cite[Prop.~2.5]{ChengBreuil}).

\begin{lem}
  \label{lem:rank-one-maps}
   Let $\M = \M(r,a,c)$ and $\N=\M(s,b,d)$ be rank one Breuil modules
   as above.  
Define $\alpha_i = p(p^{f-1} r_i + \cdots + 
   r_{i+f-1})/(p^f-1)$ and $\beta_i = p(p^{f-1} s_i + \cdots + s_{i+f-1})/(p^f-1)$ for all
   $i$.  There exists a nonzero map $\M \to \N$ if and only if
\begin{itemize}
\item  $\beta_i - \alpha_i \in \Z_{\ge 0}$ for all $i$, 
\item $\beta_i -
   \alpha_i  \equiv c_i - d_i \pmod{e(K/L)}$ for all $i$,
   and
\item $\prod_{i=0}^{f'-1} a_i = \prod_{i=0}^{f'-1} b_i$.
\end{itemize}
\end{lem}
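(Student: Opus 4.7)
The plan is to parametrize a morphism $f : \M \to \N$ by $f(m) = \u{h}\,n$ with $\u{h} = \sum_i e_i h_i$ and $h_i \in k_E[u]/u^{ep}$, and to unpack the three defining axioms componentwise. The descent-data axiom, via Lemma~\ref{lem:galois-invariants} applied with weights $w_i = d_i - c_i$, forces every nonzero monomial in $h_i$ to have degree $\equiv c_i - d_i \pmod{e(K/L)}$, and forces $(h_i)$ to be periodic with period dividing $f'$. Preservation of $\Fil^1$ requires $u^{r_i} h_i = u^{s_i} h_i'$ for some $h_i' \in k_E[u]/u^{ep}$, and compatibility with $\phi_1$ (using that $\phi$ is $k_E$-linear with $\phi(e_i) = e_{i+1}$ and $\phi(u) = u^p$) becomes the recurrence $a_{i+1} h_{i+1} = b_{i+1} \phi(h_i')$.

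From these relations a monomial $\mu u^t$ in $h_i$ produces the single monomial $(b_{i+1}/a_{i+1})\,\mu\,u^{p(t + r_i - s_i)}$ in $h_{i+1}$, so the set $T_i \subseteq \Z_{\ge 0}$ of degrees supporting $h_i$ transforms by $T_{i+1} = \{\,p(t + r_i - s_i) : t \in T_i\,\}$. Iterating $f$ times yields the strictly increasing affine map $t \mapsto p^f t + C_i$ for an integer constant $C_i$, and the periodicity $T_{i+f} = T_i$ therefore forces $|T_i| \le 1$; solving the fixed-point equation $(1-p^f)t_i = C_i$ gives the unique candidate degree $t_i = \beta_i - \alpha_i$. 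Hence a nonzero $h_i$ exists only when $\beta_i - \alpha_i$ is a non-negative integer in the prescribed residue class modulo $e(K/L)$. The upper bound $t_i < ep$ holds automatically because $\alpha_i, \beta_i \le ep/(p-1) < ep$ for $p$ odd, and the condition $t_i + r_i - s_i \ge 0$ follows from $t_{i+1} \ge 0$.

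It remains to track the scalar coefficient. Writing $h_i = \lambda_i u^{\beta_i - \alpha_i}$, the $\phi_1$-recurrence becomes $\lambda_{i+1} = (b_{i+1}/a_{i+1})\,\lambda_i$, which is crucially \emph{linear} in $\lambda_i$ rather than $p$-semilinear, because $\phi$ restricts to the identity on $k_E$. Imposing $f'$-periodicity of $\lambda$ then gives $\prod_{i=0}^{f'-1}(b_i/a_i) = 1$, equivalent to the stated condition $\prod a_i = \prod b_i$. Conversely, once all three conditions hold, any choice of nonzero $\lambda_0 \in k_E^\times$ determines $\lambda_1,\ldots,\lambda_{f-1}$ via the recurrence, and the resulting $\u{h}$ visibly defines a nonzero morphism in $\BrMods$. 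The only genuinely subtle point in the argument is the rigidity claim that each $h_i$ is supported at a single degree, and this falls out cleanly from the strict monotonicity of the iterated degree map.
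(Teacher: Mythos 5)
Your argument is correct and follows essentially the same route as the paper's proof, which writes the map as $m\mapsto\u{\delta}u^{\u{z}}n$ with $\u{\delta}$ a unit and derives the identical recurrence $z_{i+1}=p(z_i+r_i-s_i)$ together with $\phi(\u{\delta})/\u{\delta}=\u{a}/\u{b}$; your monotonicity-plus-periodicity argument for singleton support makes explicit what the paper only asserts in passing (that the $\phi_1$-equation forces $\u{\delta}$ to be constant). One small imprecision worth flagging: the degree-set relation $T_{i+1}=\{p(t+r_i-s_i):t\in T_i\}$ is really only an inclusion $T_{i+1}\subseteq\{p(t+r_i-s_i):t\in T_i\}$, since monomials pushed to degree $\ge ep$ are killed by truncation, but that can only shrink $|T_{i+1}|$, so the forced bijectivity coming from $T_{i+f}=T_i$ still gives $|T_i|\le 1$ and the conclusion is unaffected.
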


\begin{proof}
A nonzero morphism $\M \to \N$ must have the form $m
\mapsto \u{\delta} u^{\u{z}} n$ for some integers $z_i \ge 0$ and some
$\u{\delta} \in ((k\otimes_{\Fp} k_E)[u]/u^{ep})^{\times}$.  For this map to
preserve
the filtrations, it is necessary and sufficient that $r_i + z_i \ge s_i$ for all $i$.  For
the map to
commute with $\phi_1$ it is necessary and sufficient that $$ \phi(\u{\delta}
u^{(\u{z}+\u{r}-\u{s})}) \u{b} = \u{\delta} \u{a} u^{\u{z}}.$$
It follows from this equation that $\u{\delta} \in (k\otimes_{\Fp}
k_E)^{\times}$, that $z_{i+1} = p(z_i + r_i - s_i)$ for all~$i$, and
that $\phi(\u{\delta})/\u{\delta} = \u{a}/\u{b}$.  The unique solution to the
system of equations for the $z_i$'s is precisely $z_i = \beta_i -
\alpha_i$ for all $i$.  Note that the positivity of $z_{i+1}$
is equivalent to the condition $r_i+z_i \ge s_i$.

For the map to commute with descent data, it is necessary and
sufficient that $g(\u{\delta}) = (\etabar(g)^{\u{c}-\u{z}-\u{d}}\otimes
1) \u{\delta}$ for all $g \in \Gal(K/L)$.  By
Lemma~\ref{lem:galois-invariants}, and recalling that $\u{\delta}$ has no
non-constant terms, this is satisfied if and only
if $z_i \equiv c_i - d_i \pmod{e(K/L)}$ for all $i$ and the
sequence $\delta_i \in k_E$ is periodic with period dividing $f'$.
Finally, it is easy to check that there exists $\u{\delta} \in (k
\otimes_{\Fp} k_E)^{\times}$ with
$\phi(\u{\delta})/\u{\delta} = \u{a}/\u{b}$ and having the necessary periodicity
if and only if  $\prod_{i=0}^{f'-1} a_i = \prod_{i=0}^{f'-1} b_i$.
\end{proof}

\begin{rem}
  \label{rem:if-divisible-remark}
  Suppose that $e(K/L)$ is divisible by $p^{f'}-1$.  By
  \cite[Rem.~3.6]{SavittRaynaud} it is then automatic that the $\alpha_i$
  and $\beta_i$ of the preceding lemma are integers.
  Combining Lemma~\ref{lem:rank-one-maps} with
\cite[Cor.~4.3]{MR2822861}  we see in this case that there exists a
nonzero map $\M \to \N$ if and only if $T_{\st,2}^L(\M) \simeq
T_{\st,2}^L(\N)$ and $\beta_i \ge \alpha_i$ for all $i$.
\end{rem}

We will use the notation $\alpha_i = p(p^{f-1} r_i + \cdots + 
   r_{i+f-1})/(p^f-1)$ throughout the paper, and similarly for $\beta_i$.  Let us write $\Nm(\u{a}) = \prod_{i=0}^{f'-1} a_i \in k_E$.  The following is immediate from (the proof of)
Lemma~\ref{lem:rank-one}.

\begin{cor}
  \label{cor:isomorphism}
  We have $\M(r,a,c) \simeq \M(r',a',c')$ if and only if $r_i = r_i'$
  for all $i$, $c_i = c'_i$ for all $i$, and $\Nm(\u{a}) = \Nm(\u{a}')$.
\end{cor}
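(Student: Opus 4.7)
The plan is to deduce this as a direct two-directional application of Lemma~\ref{lem:rank-one-maps}. For the forward implication, an isomorphism $\M(r,a,c) \isomap \M(r',a',c')$ and its inverse are both nonzero morphisms, so Lemma~\ref{lem:rank-one-maps} applies to each direction. This gives simultaneously $\beta_i - \alpha_i \ge 0$ and $\alpha_i - \beta_i \ge 0$, hence $\alpha_i = \beta_i$ for every $i$. The congruence condition of the lemma then reads $c_i \equiv c'_i \pmod{e(K/L)}$, which is the desired equality in $\Z/(e(K/L))$, and the norm equality $\Nm(\u{a}) = \Nm(\u{a}')$ is stated directly in the lemma.

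To pass from $\alpha_i = \beta_i$ to $r_i = r'_i$, I would invoke the recursive relation $z_{i+1} = p(z_i + r_i - s_i)$ derived in the proof of Lemma~\ref{lem:rank-one-maps} (with $s_i = r'_i$). Since $z_i = \beta_i - \alpha_i = 0$ for all $i$, this yields $p(r_i - r'_i) = 0$, and as $r_i, r'_i$ are integers in $\{0,\ldots,e\}$ we conclude $r_i = r'_i$. (Equivalently, one can note directly from the defining formula for $\alpha_i$, together with periodicity, that $p\alpha_i - \alpha_{i+1} = p\,r_i$, so the sequence $(r_i)$ is determined by $(\alpha_i)$.)

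For the converse, suppose $r_i = r'_i$, $c_i = c'_i$, and $\Nm(\u{a}) = \Nm(\u{a}')$. Then $\alpha_i = \beta_i$, so $\beta_i - \alpha_i = 0$ trivially satisfies both the non-negativity and the congruence condition of Lemma~\ref{lem:rank-one-maps}, and the norm hypothesis is given. The lemma therefore produces a nonzero morphism $\M(r,a,c) \to \M(r',a',c')$ which, from the proof of that lemma, is of the form $m \mapsto \u{\delta} u^{\u{z}} n$ with $z_i = 0$ for all $i$ and $\u{\delta} \in (k \otimes_{\Fp} k_E)^{\times}$. Since $\u{\delta}$ is a unit in $(k\otimes_{\Fp} k_E)[u]/u^{ep}$, the map $m \mapsto \u{\delta} n$ is evidently an isomorphism.

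No step here should present a serious obstacle; the only mild subtlety is verifying that the weighted averages $\alpha_i$ determine the individual $r_i$, which is handled by the integrality of $r_i - r'_i$ in the bounded range $\{-e,\ldots,e\}$ combined with the relation above.
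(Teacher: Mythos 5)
Your proof is correct. The paper disposes of this corollary in one line, citing ``(the proof of) Lemma~\ref{lem:rank-one}'' (the classification of rank-one objects), whereas you deduce it explicitly from Lemma~\ref{lem:rank-one-maps}; these are essentially the same underlying computation, and your version has the advantage of actually spelling out the argument — in particular the recovery of the $r_i$ from the $\alpha_i$ via $p\alpha_i - \alpha_{i+1} = pr_i$, and the observation that when $z_i = 0$ the morphism produced by Lemma~\ref{lem:rank-one-maps} is $m \mapsto \u{\delta}n$ with $\u{\delta}$ a unit, hence an isomorphism.
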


The following proposition is again standard, but slightly more general
than the versions in the existing literature
(\cite[Prop.~2.6]{ChengBreuil}, \cite[Prop.~5.6]{CarusoBDJ}).

\begin{prop}
  \label{prop:max-model}
   Let $\M = \M(r,a,c)$ and $\N=\M(s,b,d)$ be rank one Breuil modules
   as above.  There exists a rank one Breuil module $\cP$ and a pair of
   nonzero maps $\M \to \cP$ and $\N \to \cP$ if and only if
\begin{itemize}
\item  $\beta_i - \alpha_i \in \Z$ for all $i$, 
\item $\beta_i -
   \alpha_i  \equiv c_i - d_i \pmod{e(K/L)}$ for all $i$,
   and
\item $\prod_{i=0}^{f'-1} a_i = \prod_{i=0}^{f'-1} b_i$.
\end{itemize}
In fact it is possible to take $\cP = \M(t,a,v)$ such that if $\gamma_i
= p(p^{f-1} t_i + \cdots + t_{i+f-1})/(p^f-1)$ then $\gamma_i =
\max(\alpha_i,\beta_i)$.
\end{prop}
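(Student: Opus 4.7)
The plan is to deduce necessity by direct application of Lemma~\ref{lem:rank-one-maps} to the two maps, and to establish sufficiency by explicitly constructing $\cP = \M(t,a,v)$ following the recipe suggested by the stated formula $\gamma_i = \max(\alpha_i,\beta_i)$.

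For necessity, given any $\cP = \M(t,a',v)$ receiving nonzero maps from $\M$ and $\N$, Lemma~\ref{lem:rank-one-maps} applied to each map produces $\gamma_i - \alpha_i,\ \gamma_i - \beta_i \in \Z_{\ge 0}$, congruences $\gamma_i - \alpha_i \equiv c_i - v_i$ and $\gamma_i - \beta_i \equiv d_i - v_i$ modulo $e(K/L)$, and the equalities $\Nm(\u{a}) = \Nm(\u{a}') = \Nm(\u{b})$. Taking differences of these congruences yields the three conditions in the statement.

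For sufficiency, the key preliminary calculation is the recursion $\alpha_{i+1} = p(\alpha_i - r_i)$, obtained by expanding the definition of $\alpha_i$ and using the periodicity $r_{i+f} = r_i$ (which follows from $f' \mid f$ and Lemma~\ref{lem:rank-one}); the analogue holds for $\beta_i$. I then set $\gamma_i := \max(\alpha_i,\beta_i)$ and $t_i := \gamma_i - \gamma_{i+1}/p$, so that $\gamma_{i+1} = p(\gamma_i - t_i)$ holds by construction. A case analysis on whether each of $\gamma_i$ and $\gamma_{i+1}$ equals the $\alpha$- or $\beta$-value shows $t_i \in \{0,1,\dots,e\}$: in the two pure cases $t_i$ equals $r_i$ or $s_i$ directly, while in the two mixed cases integrality uses the first hypothesis $\beta_i - \alpha_i \in \Z$, and one computes explicitly $t_i = s_i + (\alpha_i - \beta_i)$ or $t_i = r_i + (\beta_i - \alpha_i)$; each is pinched strictly between $r_i$ and $s_i$ thanks to the sign change between $\alpha_i - \beta_i$ and $\alpha_{i+1} - \beta_{i+1}$. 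Periodicity of $\{t_i\}$ dividing $f'$ is inherited from that of $\{r_i\}$ and $\{s_i\}$. Next I define $v_i \in \Z/e(K/L)$ by $v_i \equiv c_i - (\gamma_i - \alpha_i)$; the consistency $v_i \equiv d_i - (\gamma_i - \beta_i)$ amounts to the second hypothesis, and the Breuil-module compatibility $v_{i+1} \equiv p(v_i + t_i)$ follows by direct substitution from $c_{i+1} \equiv p(c_i + r_i)$ combined with the identities $\gamma_{i+1} = p(\gamma_i - t_i)$ and $\alpha_{i+1} = p(\alpha_i - r_i)$. Taking $\cP = \M(t,a,v)$, the criteria of Lemma~\ref{lem:rank-one-maps} for the map $\M \to \cP$ are tautological, while for $\N \to \cP$ they hold by the third hypothesis $\Nm(\u{a}) = \Nm(\u{b})$.

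The main obstacle I expect is the case analysis establishing $0 \le t_i \le e$ in the mixed cases. Once one observes the sandwich $p(\gamma_i - e) \le \gamma_{i+1} \le p\gamma_i$ -- whose lower bound follows because $\gamma_{i+1}$ dominates whichever of $\alpha_{i+1}, \beta_{i+1}$ corresponds to the source attaining $\gamma_i$, combined with $r_i \le e$ or $s_i \le e$ in the recursion, and whose upper bound follows because $p\gamma_i$ dominates both $\alpha_{i+1}$ and $\beta_{i+1}$ -- both inequalities on $t_i$ fall out immediately, while integrality is a clean consequence of the first hypothesis.
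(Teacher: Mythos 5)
Your construction of $\cP$ is exactly the paper's (which in turn follows Caruso's~\cite[Prop.~5.6]{CarusoBDJ}): your $t_i = \gamma_i - \gamma_{i+1}/p$ unwinds, via the recursion $\alpha_{i+1}=p(\alpha_i-r_i)$, to the paper's formula $t_i = r_i + p n_i - n_{i+1}$ with $n_i = \tfrac1p\max(0,\beta_i-\alpha_i)$, and your $v_i$ agrees with the paper's. The argument is correct and essentially the same as the paper's; the one thing you add is a direct verification (via the recursion and the case analysis / sandwich bound) that $t_i \in [0,e]$, which the paper simply delegates to \emph{loc.\ cit.}
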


\begin{proof}
It follows directly from Lemma~\ref{lem:rank-one-maps} that the
listed conditions are necessary.  For sufficiency, we follow the
argument of \cite[Prop.~5.6]{CarusoBDJ}.  Define $\gamma_i =
\max(\alpha_i,\beta_i)$, $n_i = \frac{1}{p} \max(0,\beta_i-\alpha_i)$,
$t_i = r_i + pn_i - n_{i+1}$, and $v_i \equiv c_i + (\alpha_i -
\gamma_i) \pmod{e(K/L)}$.  Observe that $n_i$ and $\alpha_i-\gamma_i$
are integers, so that $t_i$ is an integer and $v_i$ is well-defined.
An argument identical to the one at \emph{loc.~cit.} shows that $t_i
\in [0,e]$, and easy calculations show that $\gamma_i
= p(p^{f-1} t_i + \cdots + t_{i+f-1})/(p^f-1)$ and $v_{i+1} \equiv
p(v_i + t_i) \pmod{e(K/L)}$.  Thus $\cP = \M(t,a,v)$ is a
Breuil module with the property given in the last sentence of the proposition, and two applications of Lemma~\ref{lem:rank-one-maps}
show that there exist nonzero maps $\M \to \cP$ and $\N \to \cP$.
(For the latter, note that $\gamma_i - \alpha_i \equiv c_i - v_i
\pmod{e(K/L)}$, and together with our other hypotheses this
implies that $\gamma_i - \beta_i \equiv d_i - v_i \pmod{e(K/L)}$.)
\end{proof}

\begin{cor}
  \label{cor:generic-fibre-isom}
  The conditions in Proposition~\ref{prop:max-model} give
  necessary and sufficient conditions that $T_{\st,2}^L(\M) \simeq
T_{\st,2}^L(\N)$.
\end{cor}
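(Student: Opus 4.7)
The plan is to establish the two directions of the biconditional separately, with most of the content already packaged into Proposition~\ref{prop:max-model}.

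For the forward direction, I would start with the three numerical conditions. Proposition~\ref{prop:max-model} then produces a rank-one Breuil module $\cP$ together with nonzero morphisms $\varphi\col\M\to\cP$ and $\psi\col\N\to\cP$. Applying the covariant functor $T_{\st,2}^L$, which sends rank-one objects of $\BrMods$ to one-dimensional $k_E$-representations of $G_L$, gives two $k_E[G_L]$-linear maps between one-dimensional representations. I expect these induced maps to be nonzero, either by the interpretation of $\BrMods$ as finite flat group schemes with descent data (under which $\varphi$ and $\psi$ correspond to nonzero isogenies, which in turn induce nonzero maps on the \'etale generic fibres), or by a direct computation using the explicit formula for $T_{\st,2}^L$ on the rank-one data produced by Proposition~\ref{prop:max-model}. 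Since any nonzero $k_E$-linear map between one-dimensional $k_E$-representations is an isomorphism, this yields $T_{\st,2}^L(\M) \simeq T_{\st,2}^L(\cP) \simeq T_{\st,2}^L(\N)$.

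For the reverse direction, the strategy is to extract the three conditions from the assumed isomorphism $T_{\st,2}^L(\M) \simeq T_{\st,2}^L(\N)$ by computing the character $\chi_{\M}\col G_L \to k_E^\times$ attached to $\M(r,a,c)$ explicitly in terms of the data $(\u{r},\u{a},\u{c})$. The restriction $\chi_\M|_{I_L}$ should be expressible as a product of fundamental characters $\omega_i$ with exponents controlled by $\u{r}$ and $\u{c}$ modulo $e(K/L)$, while the value on a chosen Frobenius lift should be determined by $\Nm(\u{a})$ up to a controlled sign. Matching $\chi_\M = \chi_\N$ on inertia will then yield both $\beta_i - \alpha_i \in \Z$ and $\beta_i - \alpha_i \equiv c_i - d_i \pmod{e(K/L)}$, and matching Frobenius values will give $\Nm(\u{a}) = \Nm(\u{b})$.

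The main technical obstacle is performing this character computation for a general tame extension $K/L$, since the versions available in the literature (cf.\ Remark~\ref{rem:if-divisible-remark}, which cites \cite{MR2822861}) typically impose the divisibility $p^{f'}-1 \mid e(K/L)$. A cleaner route, and the one I would prefer to take, is to reduce to that divisible case by base change: enlarge $K$ to a tame Galois extension $K'/L$ with $p^{f'}-1 \mid e(K'/L)$, base-change $\M$ and $\N$ to $\BrMod^{K'}_{L,k_E}$ (which preserves both generic fibres and the combinatorial invariants appearing in the three conditions, after the obvious rescaling by $e(K'/K)$), apply Remark~\ref{rem:if-divisible-remark} together with Lemma~\ref{lem:rank-one-maps} to obtain the conditions for the base-changed modules, and finally transfer the conclusion back to the original data $(r,a,c)$ and $(s,b,d)$.
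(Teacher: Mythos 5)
Your forward direction is essentially the paper's: both produce $\cP$ from Proposition~\ref{prop:max-model} and then argue that the nonzero maps $\M\to\cP$, $\N\to\cP$ induce isomorphisms on generic fibres. The paper nails this down by observing that the kernels contain no free $k[u]/u^{ep}$-submodules and citing \cite[Prop.~8.3]{SavittCompositio}, whereas you leave it as an expectation; that precise justification is what you would need to supply (your appeal to the finite flat group scheme picture amounts to the same thing). The reverse direction, however, is a genuinely different route. The paper is structural: it invokes Raynaud's lattice result for finite flat group schemes with fixed generic fibre together with a scheme-theoretic closure argument as in \cite[Prop.~4.1.3]{BCDT} to manufacture a common target $\cP$, and then lets Lemma~\ref{lem:rank-one-maps}, applied twice, produce the numerical conditions. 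Your approach is computational: read the conditions off from an explicit formula for the generic fibre character. You correctly notice that Lemma~\ref{lem:tst-evaluation} is only available under the divisibility $p^{f'}-1\mid e(K/L)$, which is not yet assumed in~\S\ref{sec:rank-one}, and so you pivot to a base change argument. The trade-off is that the paper's argument is a direct application of two standard citations, while yours avoids Raynaud at the cost of needing a Breuil-module base change formalism.

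That base change step is the real gap. You need a functor $\BrMod^K_{L,k_E}\to\BrMod^{K'}_{L,k_E}$ compatible with $T_{\st,2}^L$ and with an explicit effect on the rank-one data $(\u{r},\u{a},\u{c})\mapsto (e(K'/K)\u{r},\u{a},e(K'/K)\u{c})$ (for $K'/K$ totally ramified; one can always choose such a $K'$ with $p^{f'}-1\mid e(K'/L)$ and $K'/L$ still tame Galois, since $p^f\equiv 1 \pmod{p^{f'}-1}$). This formalism is not set up in the paper and would need to be developed or sourced; in particular one has to check that the descent data extends correctly and that the equivalence of categories with group schemes is respected. Granting it, the transfer back does work: the congruence $\alpha'_i+c'_i\equiv\beta'_i+d'_i\pmod{e(K'/L)}$ for the base-changed data unwinds to $\alpha_i-\beta_i+c_i-d_i\in e(K/L)\ZZ$, which recovers both $\beta_i-\alpha_i\in\ZZ$ and the congruence modulo $e(K/L)$ simultaneously. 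One small miscitation: at the point where you extract the conditions over $K'$, the relevant tool is Lemma~\ref{lem:tst-evaluation} (the explicit character formula), not Remark~\ref{rem:if-divisible-remark}, which gives a criterion for the existence of a morphism rather than a way to read conditions off an abstract isomorphism.
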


\begin{proof}
Suppose that there exists $\cP$ as in
Proposition~\ref{prop:max-model}.  Since the kernels of the maps produced by
Lemma~\ref{lem:rank-one-maps} do not contain any free
$k[u]/u^{ep}$-submodules, it follows from
\cite[Prop.~8.3]{SavittCompositio} that they induce isomorphisms $T_{\st,2}^L(\M) \simeq
T_{\st,2}^L(\cP)$ and $T_{\st,2}^L(\N) \simeq
T_{\st,2}^L(\cP)$.

Conversely, suppose $T_{\st,2}^L(\M) \simeq
T_{\st,2}^L(\N)$.  Let $\M,\N$ correspond to the rank one $k_E$-vector
space 
schemes $\cG,\cH$ with generic fibre descent data.  By a theorem of
Raynaud \cite[Prop.~2.2.2, Cor~2.2.3]{Raynaud} there exists a maximal rank one $k_E$-vector
space scheme $\cG'$ with nonzero maps $\cG' \to \cG$, $\cG' \to \cH$,
and $\cG'$ obtains generic fibre descent data by a scheme-theoretic closure argument as in
\cite[Prop.~4.1.3]{BCDT}.  Then we can take $\cP$ to be the Breuil
module corresponding to $\cG'$.
\end{proof}

\subsection{Extensions of rank one Breuil modules}
\label{sec:extensions-rank-one-1}

We now describe the extensions between the rank one objects
of $\BrMods$.
 The main result is analogous to
 \cite[Lem.~5.2.2]{BCDT}, 
\cite[Thm.~7.5]{SavittCompositio} and
\cite[Thm.~3.9]{ChengBreuil}, and since the proof is
substantively the same as the proofs given at those references, we will omit some details
of the argument.

\begin{thm} \label{thm:extensions} Let $\M,\N$ be rank one Breuil modules, with notation as
  in~Section~\ref{sec:rank-one}. 
 Each extension of $\M$ by $\N$ is isomorphic to precisely
  one of the form
\begin{itemize}
\item $\cP =   ((k \otimes_{\Fp} k_E)[u]/u^{ep}) \cdot m +  ((k
  \otimes_{\Fp} k_E)[u]/u^{ep}) \cdot n$,
\item $\Fil^1 \cP = \la u^{\u{s}}n, u^{\u{r}} m + \u{h} n \ra$,
\item $\phi_1( u^{\u{s}} n) = \u{b}n$ and $\phi_1(u^{\u{r}} m + \u{h}
  n ) = \u{a} m$,
\item $\widehat{g}(n) = (\etabar(g)^{\u{d}} \otimes 1)n$ and
  $\widehat{g}(m) = (\etabar(g)^{\u{c}} \otimes 1) m$ for all $g \in \Gal(K/L)$,
\end{itemize}
in which each $h_i \in k_E[u]/u^{ep}$ is a polynomial such that:
\begin{itemize}
\item $h_i$ is divisible by $u^{r_i+s_i - e}$,
\item the sequence $h_i$ is periodic with period dividing $f'$,
\item each nonzero term of $h_i$ has degree congruent to $r_i +
  c_i - d_i \pmod{e(K/L)}$, and
\item $\deg(h_i) < s_i$, 
except that when there exists a
nonzero morphism $\M \to \N$, the polynomials $h_i$ for $f' \mid i$
may also have a term of degree $r_0 + \beta_0
- \alpha_0$ in common.
\end{itemize}

In particular the dimension of $\Ext^1(\M,\N)$ is given by the formula
$$ \delta + \sum_{i=0}^{f'-1} \#\left\{ j \in \left[\max(0,r_i+s_i-e),s_i\right)  : j
  \equiv r_i +c_i - d_i \pmod{e(K/L)} \right\}$$ where $\delta = 1$
if there exists a map $\M \to \N$ and $\delta =0 $ otherwise.
\end{thm}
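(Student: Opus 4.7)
The plan is to bring an arbitrary extension $0 \to \N \to \cP \to \M \to 0$ into the claimed normal form by successive changes of basis, then identify the constraints on $\u{h}$ and the residual gauge freedom. Since $\M$ and $\N$ are free of rank one, any lift $m \in \cP$ of a generator of $\M$ makes $\cP$ free of rank two on $\{m,n\}$. From $\Fil^1 \cP \cap \N = u^{\u{s}} \N$ and the surjection $\Fil^1 \cP \to u^{\u{r}} \M$, every lift of $u^{\u{r}} m$ in $\Fil^1 \cP$ has the form $u^{\u{r}} m + \u{h} n$, and $\Fil^1 \cP$ is the $(k \otimes k_E)[u]/u^{ep}$-module generated by $u^{\u{s}} n$ and such a lift; the containment $u^e \cP \subseteq \Fil^1 \cP$ then forces the divisibility $u^{\max(0, r_i + s_i - e)} \mid h_i$. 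Writing $\phi_1(u^{\u{r}} m + \u{h} n) = \u{a}' m + \u{k} n$, we replace $m$ by a unit multiple to arrange $\u{a}' = \u{a}$, then absorb $\u{k}$ by a substitution $m \to m + \u{\epsilon}_1 n$; a further substitution $m \to m + \u{\epsilon}_2 n$ simultaneously normalises $\widehat{g}(m) = (\etabar(g)^{\u{c}} \otimes 1) m$ for all $g \in \Gal(K/L)$. The two normalisations can be performed together because any failure of descent compatibility must itself be $\phi_1$-equivariant.

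With the basis fixed, compatibility of descent data with the second generator of $\Fil^1 \cP$ yields, after computing $\widehat{g}(u^{\u{r}} m + \u{h} n)$ and requiring the result to lie in $\Fil^1 \cP$, the congruence
\[
g(\u{h}) \equiv (\etabar(g)^{\u{r}+\u{c}-\u{d}} \otimes 1)\, \u{h} \pmod{u^{\u{s}}}
\]
for all $g \in \Gal(K/L)$. Applying Lemma~\ref{lem:galois-invariants} with $\u{w} = -(\u{r}+\u{c}-\u{d})$ produces exactly the two conditions that (modulo $u^{\u{s}}$) nonzero terms of $h_i$ lie in degrees congruent to $r_i + c_i - d_i \pmod{e(K/L)}$ and that the sequence $h_i$ is periodic with period dividing $f'$. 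Together with the divisibility from the first paragraph, these exhaust the constraints forced by the Breuil-module axioms; compatibility of $\phi_1$ with descent data on the generator $u^{\u{s}} n$ is inherited automatically from $\N$.

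The remaining gauge freedom consists of substitutions $m \to m + \u{\epsilon} n$ preserving all of these normalisations, which requires $g(\u{\epsilon}) = (\etabar(g)^{\u{c}-\u{d}} \otimes 1) \u{\epsilon}$ together with a $\phi$-semilinear equation from $\phi_1$-compatibility; the net effect on $\u{h}$ is to shift it by $u^{\u{r}} \u{\epsilon}$ modulo a correction that may be absorbed into the $u^{\u{s}} n$ generator. This freedom can be used to force $\deg h_i < s_i$, except that those $\u{\epsilon}$ whose associated $u^{\u{r}} \u{\epsilon}$ already belongs to the filtration correspond by Lemma~\ref{lem:rank-one-maps} precisely to morphisms $\M \to \N$; such morphisms act trivially on the isomorphism class of $\cP$ and so leave an unremovable common coefficient of degree $r_0 + \beta_0 - \alpha_0$ at components $f' \mid i$ whenever $\Hom(\M,\N) \ne 0$, giving the $\delta$ term. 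Counting the free coefficients of $\u{h}$ (indexed by $i \in \{0,\ldots,f'-1\}$ and by admissible degrees $j \in [\max(0,r_i+s_i-e),s_i)$ with $j \equiv r_i+c_i-d_i \pmod{e(K/L)}$) and adding $\delta$ yields the claimed formula. The main obstacle will be this last identification: verifying that the $\u{\epsilon}$ failing to yield a genuine reduction of $\u{h}$ are exactly the morphisms $\M \to \N$, so that the $+\delta$ correction is accounted for correctly.
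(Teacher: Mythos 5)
Your proposal correctly identifies the overall strategy — normalise $\phi_1$, normalise the descent data via vanishing of $H^1(\Gal(K/L),-)$, extract the constraints on $\u{h}$ from the Breuil-module axioms, and then use the residual gauge freedom $m\mapsto m + \u{\epsilon}n$ (equivalently $\u{h}\mapsto \u{h} - u^{\u{r}}(\u{ba^{-1}})\phi(\u{t}) + u^{\u{s}}\u{t}$) to reach the normal form. However, there are two issues, one factual and one a genuine gap.

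First, the constraint on $\u{h}$ is mis-stated. The requirement that $\widehat{g}$ preserve $\Fil^1\cP$ does give a congruence modulo $u^{\u{s}}$, but you dismiss $\phi_1$-equivariance on the generator $u^{\u{r}}m + \u{h}n$ as ``inherited automatically.'' In fact, writing $\widehat{g}(u^{\u{r}}m+\u{h}n) = (\etabar(g)^{\u{r}+\u{c}}\otimes 1)(u^{\u{r}}m+\u{h}n) + u^{\u{s}}\u{x}\,n$, applying $\phi_1$ to the $u^{\u{s}}\u{x}\,n$ term produces $\phi(\u{x})\u{b}\,n$, which must vanish; since $\u{b}$ is a unit this forces $u^e\mid\u{x}$, i.e.\ the degree and periodicity conditions on $h_i$ must hold for all terms of degree below $e+s_i$, not merely below $s_i$. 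This larger constraint space is what the paper calls $V$, and the dimension count that underlies the theorem ($\dim V - \dim U + \dim\ker = \delta + \sum y_i$) depends on working with $V$, not with the weaker space you describe.

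Second, and more seriously, the heart of the proof — that the gauge action actually lands you in the stated canonical form, i.e.\ that $W'\cap\operatorname{im}(\gimelsub) = 0$ where $W'$ is the claimed normal form space — is not established. You assert ``this freedom can be used to force $\deg h_i < s_i$,'' but the gauge shift $-u^{\u{r}}(\u{ba^{-1}})\phi(\u{t}) + u^{\u{s}}\u{t}$ has components of degree both above and below $s_i$ (depending on $r_i$ relative to $s_i$), so killing the high-degree terms of $\u{h}$ can reintroduce low-degree terms; one must show that the projection $\pi_W$ of $\operatorname{im}(\gimelsub)$ to $W$ is zero. The paper proves this by factoring $\gimelsub(\u{t}) = u^{\u{s}}\dalethsub(\u{t}) + \pi_W(\gimelsub(\u{t}))$ and carefully analysing $\overline{\dalethsub}$ on $(k\otimes k_E)[u]/u^e$, showing it is bijective except in a single degenerate case and that in all cases $\ker(\overline{\dalethsub})$ consists of $\ker(\gimelsub') + u^e(k\otimes k_E)[u]/u^{ep}$, whence $u^{\u{s}}\dalethsub(\u{t})=0$ forces $\gimelsub(\u{t})\in u^e(k\otimes k_E)[u]/u^{ep}$ and hence $\pi_W(\gimelsub(\u{t}))=0$. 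You flag this as ``the main obstacle'' at the end of your proposal, but do not resolve it, and the heuristic about morphisms $\M\to\N$ leaving an ``unremovable common coefficient'' describes the phenomenon rather than proving it: the morphisms sit inside $\ker(\gimelsub)$, contributing $\delta$ to $\dim\ker(\gimelsub)$ and hence to $\dim\operatorname{coker}(\gimelsub)$, but this alone does not establish the direct sum decomposition $V = W'\oplus\operatorname{im}(\gimelsub)$ needed for uniqueness of the normal form.
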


\begin{proof}
Let $\cP$ be any extension of $\M$ by $\N$.  Then $\Fil^1 \cP = \la
u^{\u{s}}n,u^{\u{r}}m + \u{h}n \ra$ for some  $\u{h}$ and some lift
$m$ of the given generator of $\M$, and $\phi(u^{\u{r}}m + \u{h}n) =
\u{a}m + \delta n$ for some $\delta$.  Replacing $m$ with $m +
\delta \u{a}^{-1} n$ and suitably altering $\u{h}$ shows that we can
take $\delta = 0$.  The condition that each $h_i$ is divisible by
$u^{r_i+s_i-e}$ is necessary and sufficient to ensure that $\Fil^1 \cP
\supset u^e \cP$, so that the first three conditions given in the 
statement of the theorem define a Breuil module (without descent
data).  One checks straightforwardly that replacing $m$ with $m +
\u{a}^{-1} \phi(\u{t}) n$ for any $\u{t} \in (k\otimes_{\Fp} k_E)[u]/u^{ep}$ preserves the shape of $\cP$ while replacing
$\u{h}$ with $h - u^{\u{r}} (\u{ba^{-1}}) \phi(\u{t}) + u^{\u{s}} \u{t}$, and
that these are precisely the changes of $m$ that preserve the shape of
$\cP$.

Now the descent data on $\cP$ must have the shape $$\widehat{g}(m) =
(\etabar(g)^{\u{c}} \otimes 1)m + A_g n$$ for some collection of
elements $A_g \in
(k \otimes_{\Fp} k_E)[u]/u^{ep}$.   The condition that $\widehat{hg} =
\widehat{h} \circ \widehat{g}$, evaluated at $m$, implies that the
function $g \mapsto (\etabar(g)^{-\u{c}}\otimes 1) A_g$ is a cocycle in
the cohomology group $H^1(\Gal(K/L),(k\otimes
k_E)[u]/u^{ep})$  in which the action of $\Gal(K/L)$ on $(k\otimes
k_E)[u]/u^{ep}$ is given by $g \cdot x = (\etabar(g)^{\u{d}-\u{c}}
\otimes 1) g(x)$, where $g(x)$ is the usual action.  This cohomology
group is trivial since $\Gal(K/L)$ is assumed to have order prime to
$p$, so that $(\etabar(g)^{-\u{c}} \otimes 1)A_g$ is the coboundary of
some element $v$.  The relation
$\phi_1 \circ \widehat{g} = \widehat{g} \circ \phi_1$ applied to
$u^{\u{r}}m + \u{h}n$ implies that $A_g n$ lies in the image of
$\phi_1$, so that all nonzero terms in each $A_g$ have degree
divisible by $p$; it follows that we can take $v$ to have the same property.
One computes that replacing~$m$ with $m +
\u{a}^{-1} \phi(\u{t}) n$ changes $(\etabar(g)^{-\u{c}} \otimes 1)A_g$ by
the coboundary of $\u{a}^{-1} \phi(\u{t})$, and choosing~$\u{t}$ so that
$\u{a}^{-1} \phi(\u{t}) = -v$ allows us to take $A_g = 0$ for all $g$.

Thus our extension $\cP$ has the shape as in the theorem, and it
remains to investigate the possibilities for $\u{h}$.  In order that
the given shape of $\cP$ actually defines a Breuil module with descent
data, it is necessary and sufficient that $u^{r_i + s_i - e}$ divides
each $h_i$, and that the relation $\phi_1 \circ \widehat{g} =
\widehat{g} \circ \phi_1$ is well-defined and satisfied when evaluated
at $u^{\u{r}}m + \u{h} n$.  A direct calculation shows that the latter
condition is equivalent to the condition that $u^{e + \u{s}}$
divides $$(\etabar^{\u{d}}(g) \otimes 1) g(\u{h}) -
(\etabar^{\u{r}+\u{c}}(g) \otimes 1) \u{h}$$ for all $g \in \Gal(K/L)$,
or equivalently that the remainder of $\u{h}$ upon division by
$u^{e+\u{s}}$
is invariant under the action of Lemma~\ref{lem:galois-invariants}
with $\u{w} = \u{r}+\u{c}-\u{d}$.  From that lemma, we deduce that
any term of $h_i$ of degree $D < e+s_i$ must satisfy $D \equiv r_i +
c_i - d_i \pmod{e(K/L)}$, and that such terms occur periodically
  with period dividing $f'$.  Let  $V \subseteq (k \otimes
  k_E)[u]/u^{ep}$ be the space of elements $\u{h}$ satisfying the
  conditions in the previous sentence and with each $h_i$ divisible by $u^{\max(0,r_i+s_i-e)}$.

Now let us examine the changes-of-variable $m \leadsto m + \u{a}^{-1} \phi(\u{t}) n$
that preserve the shape of $\cP$ (but may change $\u{h}$).  From
the argument two paragraphs earlier, we see that such a change of
variables preserves the shape of the descent data precisely when the
coboundary of $\u{a}^{-1} \phi(\u{t})$ is trivial, or in other words
precisely when $\phi(\u{t}) = g \cdot \phi(\u{t})$ under the
$\Gal(K/L)$-action of that paragraph.  Thus $\u{t}$ may have arbitrary
terms of degree at least $e$ (since $\phi(u^{e}) = 0$), while by
Lemma~\ref{lem:galois-invariants} the
nonzero terms of
$t_i$ of degree $D <e$ must have $D \equiv
p^{-1}(c_{i+1}-d_{i+1}) \pmod{e(K/L)}$, and these terms must occur
periodically with period dividing $f'$.  We say that a choice of $\u{t}$ with 
these properties is \emph{allowable}.

Recall from the beginning of the proof that  replacing $m$ with $m +
\u{a}^{-1} \phi(\u{t}) n$ has the effect of replacing $\u{h}$ with
$\u{h}' = \u{h} - u^{\u{r}} (\u{ba^{-1}}) \phi(\u{t}) + u^{\u{s}}
\u{t}$. 
Let $U \subseteq (k \otimes k_E)[u]/u^{ep}$ be the space of allowable
choices of $\u{t}$, and $\gimelsub : U \to V$ the map that sends $\u{t}$
to $u^{\u{r}}(\u{ba^{-1}}) \phi(\u{t}) - u^{\u{s}} \u{t}$.  The above
discussion shows that $\Ext^1(\M,\N) \simeq \coker(\gimelsub)$.  We use
this isomorphism to compute $\dim_{k_E} \Ext^1(\M,\N)$.  Let $y_i = \#\left\{ j \in \left[\max(0,r_i+s_i-e),s_i\right)  : j
  \equiv r_i +c_i - d_i \pmod{e(K/L)} \right\}$.  One calculates
directly from their definitions that
$$\dim_{k_E} U = e'f' + ef(p-1), \qquad \dim_{k_E} V = e'f' + ef(p-1) + \sum_{i=0}^{f-1} y_i -
\sum_{i=0}^{f-1} s_i. $$ 

Suppose that $\u{t} \in
\ker(\gimelsub)$, i.e.~that $u^{\u{r}}(\u{ba^{-1}}) \phi(\u{t}) =
u^{\u{s}} \u{t}$.  Observe (e.g.~by comparing with the proof of
Lemma~\ref{lem:rank-one-maps})  that this is precisely the condition
required for the map $\M \to \N$ defined by $m \mapsto \phi(\u{t})n$
to be a map of Breuil modules.  If there are no such nonzero maps
(i.e.~if $\delta = 0$, with $\delta$ as in the statement of the Theorem), then
$\ker(\gimelsub) = \{ \u{t} \in U \, : \, u^{\u{s}} t = 0 \}$ and so
$\ker(\gimelsub)$ has dimension $\sum_i s_i$.    If instead there exists
a nonzero map $\M \to \N$ (i.e.~if $\delta = 1$), then since that map
must be unique up to scaling, we see that $u^{\u{s}} \u{t}$ is
unique up to scaling and $\ker(\gimelsub)$ has dimension $1 + \sum_i
s_i$.  In either case $\dim_{k_E} \ker(\gimelsub) = \delta + \sum_i s_i$.
Finally we calculate that $\coker(\gimelsub)$ has dimension 
$$\dim_{k_E} V - \dim_{k_E} U + \dim_{k_E} \ker(\gimelsub) =  \delta +
\sum_{i=0}^{f-1} y_i.$$

Now let $W' \subseteq V$ be the space of elements $\u{h}$ satisfying
the conditions given in the statement of the theorem, and $W
\subseteq W'$ the subspace of elements $\u{h}$ for which the coefficient of degree $r_0 +
\beta_0 - \alpha_0$ in $h_0$ is zero.  (Thus $W \subsetneq W'$ if and only if
$\delta = 1$, in which case $W'/W$ has $k_E$-dimension $1$.)
 It is easy to verify
that $\dim_{k_E} W' = \delta + \sum_i y_i$, and so to complete the
proof of the Theorem it suffices to show that $W' \cap \im(\gimelsub) =
0$.   When $\delta = 1$, a straightforward computation (using the fact
that $\Nm(\u{a}) = \Nm(\u{b})$ in this case) shows that if $\u{h} \in
\im(\gimelsub)$ then the coefficients $\xi_i$ of degree $r_i + \beta_i - \alpha_i$
in $h_i$ for $i = 0,\ldots,f'-1$ satisfy the linear relation
$\sum_{i=0}^{f-1} (a_0 \cdots a_i)(b_0 \cdots b_i)^{-1} \xi_i = 0$.
If in addition we have $\u{h} \in W'$  (so that $\xi_i = 0$ for $i
\not\equiv 0 \pmod{f'}$) then $\xi_0 = 0$ and $\u{h} \in W$.   We are
therefore reduced in all cases to showing that $W \cap \im(\gimelsub) = 0$.

Let $\pi_W : V \to W$ be the projection map that kills each term of $h_i$
of degree at least $s_i$.
Observe that we may write
$$ \gimelsub(\u{t}) = u^{\u{s}} \dalethsub(\u{t}) + \pi_W(\gimelsub(\u{t}))$$
where $\dalethsub(\u{t}) = u^{\u{r}-\u{s}}(\u{ba^{-1}}) \phi(\u{t}) -
\u{t}$, with terms of negative degree in $\dalethsub(\u{t})$
understood to be zero.  To finish the argument we must show that if $u^{\u{s}} \dalethsub(\u{t}) = 0$ then $\pi_W(\gimelsub(\u{t})) = 0$.

Observe that the defining formula for $\dalethsub$
also gives a well-defined 
map $\overline{\dalethsub} \in \End((k\otimes k_E)[u]/u^e)$.
 Fix an integer $v_i \in [0,e)$ and recursively define
$v_j = (r_j - s_j) + p v_{j-1}$ for $j > i$.  Since $u^{v_j} e_j$ and
$(b_{j+1}/a_{j+1}) u^{v_{j+1}} e_{j+1}$ are congruent modulo the image of
$\overline{\dalethsub}$ (where the $e_j$'s are the idempotents defined
in~\ref{notn:idempotent}), it follows that $u^{v_j} e_j \in
\im(\overline{\dalethsub})$ except possibly if the sequence
$\{v_j\}$ lies entirely within the interval $[0,e)$.  In the latter
case the sequence $\{v_j\}$ must be periodic, indeed with period dividing $f'$,
and one computes that $v_j = p^{-1}(\beta_{j+1} - \alpha_{j+1})$ for
all $j$.  Then one checks that $u^{v_j} e_j$ and $\Nm(\u{ba^{-1}}) u^{v_j} e_j$ are
congruent modulo $\im(\overline{\dalethsub})$; so unless $\Nm(\u{a})
= \Nm(\u{b})$ we  again have $u^{v_j} e_j
\in \im(\overline{\dalethsub})$.   We conclude that $\overline{\dalethsub}$ is surjective
(hence bijective) unless $\Nm(\u{a}) = \Nm(\u{b})$ and $p^{-1}(\beta_i
- \alpha_i) \in \{0,\ldots,e-1\}$ for all~$i$, in which case the image of
$\overline{\dalethsub}$ has codimension at most~$1$; and in all cases we conclude that
$\ker(\overline{\dalethsub}) = \ker(\gimelsub') + u^e (k \otimes k_E)[u]/u^{ep}$, where $\gimelsub'$ is the
endomorphism of $(k\otimes k_E)[u]/u^{ep}$ given by the same  defining
formula as $\gimelsub$.  

Now if $u^s \dalethsub(\u{t}) = 0$ then $\overline{\dalethsub}(\u{t}) = 0$,
so $\u{t} \in \ker(\gimelsub') + u^e(k \otimes k_E)[u]/u^{ep}$; finally
$$\gimelsub(\u{t}) = \gimelsub'(\u{t}) \in \gimelsub'(u^e(k\otimes
k_E)[u]/u^{ep}) \subseteq u^e(k\otimes k_E)[u]/u^{ep},$$ 
and it follows that $\pi_W(\gimelsub(\u{t})) = 0$.
\end{proof}

\begin{rem}\label{rem:extension-notation}
 We have seen in the proof of Theorem~\ref{thm:extensions} that $\cP$
 as in the first set of bullet points of Theorem~\ref{thm:extensions}
 is a well-defined Breuil module provided
 that
 \begin{itemize}
 \item $h_i$ is divisible by $u^{r_i + s_i - e}$,
 \item nonzero terms of $h_i$ of degree less than $e+s_i$ have
   degree congruent to $r_i + c_i - d_i \pmod{e(K/L)}$, and occur
   periodically (for $i$) with
   period dividing~$f'$.
 \end{itemize}
We will denote this Breuil module by $\cP(r,a,c;s,b,d;h)$.
\end{rem}

\subsection{Comparison of extension classes}
\label{sec:comp-extens-class}

We assume for the remainder of this paper that $e(K/L)$ is divisible
by $p^{f'}-1$, so that in particular Remark~\ref{rem:if-divisible-remark} is
in force.  We fix characters $\chi_1,\chi_2 : G_L \to k_E^{\times}$
and suppose that $\M = \M(r,a,c)$ and $\N=\M(s,b,d)$ are rank one Breuil modules whose generic
fibres are $\chi_1,\chi_2$ respectively.  The following lemma is \cite[Cor.~4.3]{MR2822861}.

\begin{lem}
 \label{lem:tst-evaluation}
  Set $\M = \M(r,a,c)$ and write $\lambda = \Nm(\u{a})^{-1}$.  Then
  $T_{\st,2}^K(\M) = (\sigma_i \circ \etabar^{c_i + \alpha_i}) \cdot
\ur_{\lambda}$, where $\ur_{\lambda}$ is the unramified character of $G_L$
sending an arithmetic Frobenius element to $\lambda$.
\end{lem}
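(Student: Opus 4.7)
The plan is to verify this by direct computation on the rank-one Breuil module $\M = \M(r,a,c)$, separating the claim into an unramified contribution coming from the Frobenius datum $\u{a}$ and an inertial contribution coming from the pair $(\u{r},\u{c})$. First I would recall that $T_{\st,2}^L(\M)$ is computed by mapping $\M$ into an appropriate $\Acris$-style period ring with descent data; in the rank-one case this reduces to analysing a one-dimensional $k_E$-space of Frobenius-eigenvectors determined by the single relation $\phi_1(u^{\u{r}} m) = \u{a} m$, twisted by $\widehat{g}(m) = (\etabar(g)^{\u{c}} \otimes 1)m$. The residual action of $G_L$ factors through a character, and by Corollary~\ref{cor:isomorphism} the isomorphism class of that character depends only on $\u{r}$, $\u{c}$, and $\Nm(\u{a})$, which is already a strong consistency check.

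Next I would identify the two factors separately. The unramified part is standard: the Frobenius eigenvalue on the étale realization is visible from $\Nm(\u{a}) = \prod_{i=0}^{f'-1} a_i$, and covariance of $T_{\st,2}^L$ combined with the arithmetic-Frobenius convention for $\Art_L$ forces arithmetic Frobenius to act by $\lambda = \Nm(\u{a})^{-1}$, producing the factor $\ur_\lambda$. For the inertial part, our running hypothesis $p^{f'}-1 \mid e(K/L)$ ensures (as in Remark~\ref{rem:if-divisible-remark}) that the exponents $\alpha_i = p(p^{f-1}r_i + \cdots + r_{i+f-1})/(p^f-1)$ are integers, and they form the unique periodic sequence satisfying $\alpha_{i+1} = p(\alpha_i - r_i)$ coming from iterating the Frobenius relation; these are exactly the exponents by which the tame quotient of $I_K$ acts, via $\omega_i = (\sigma_i \circ \etabar)|_{I_L}$, on a Frobenius-eigenvector solving the above relation. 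The descent datum then twists this $I_K$-character by the additional tame exponent $c_i$, giving the inertial contribution $\sigma_i \circ \etabar^{c_i + \alpha_i}$.

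The main obstacle will be pinning down normalization conventions so that signs and inverses line up correctly: the covariance of $T_{\st,2}^L$ (which forces $\lambda = \Nm(\u{a})^{-1}$ rather than $\Nm(\u{a})$), the arithmetic-versus-geometric-Frobenius normalization of $\Art_L$, and the precise sign convention in the definition $\omega_i = (\sigma_i \circ \etabar)|_{I_L}$ fixed in the introduction. Once these are settled, the computation is a direct unwinding of definitions, and matches the statement of \cite[Cor.~4.3]{MR2822861}, so the proof is really an appeal to that reference after a conventions check.
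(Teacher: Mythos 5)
Your proposal correctly recognizes that the content of this lemma is precisely \cite[Cor.~4.3]{MR2822861}, which is exactly what the paper does—its entire proof is the one-line citation ``This is \cite[Cor.~4.3]{MR2822861}.'' Your expanded discussion of the Frobenius recursion $\alpha_{i+1}=p(\alpha_i-r_i)$, the separation into an unramified factor from $\Nm(\underline{a})$ and a tame inertial factor from $(c_i,\alpha_i)$, and the convention checks is a fair sketch of what the cited result establishes, but in the end you invoke the same reference, so the approach is the same as the paper's.
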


The character $\chi_1$ and the sequence of $r_i$'s determine $\M$ up to
isomorphism (\emph{cf.}~Corollaries~\ref{cor:isomorphism} and~\ref{cor:generic-fibre-isom}), and
similarly for $\N$; moreover, one checks from Lemmas~\ref{lem:rank-one} and~\ref{lem:tst-evaluation} that given $\chi_1$ and $r_0,\ldots,r_{f-1}$ such that $\alpha_i \in
\Z$ for some (hence all) $i$, there exists $\M(r,a,c)$
with generic fibre $\chi_1$. In the remainder of this section, we compare
extension classes in $\Ext^1(\M,\N)$ with extension classes in
$\Ext^1(\M',\N')$ where $\M',\N'$ are certain other Breuil modules
with the same generic fibres as $\M,\N$ respectively; our treatment
follows the treatment of the case $f=1$ in Section~5.2 of \cite{GLS11}.

\begin{prop}
  \label{prop:comparison-with-minimax}
The Breuil module $\cP = \cP(r,a,c;s,b,d;h)$ has the same generic fibre as
$\cP^\dag = \cP(0,a,c^\dag;e,b,d^\dag;u^{\u{\delta}}h)$ 
where 
\begin{itemize}
\item $c^\dag_i = c_i + \alpha_i$, 
\item $d^\dag_i = d_i + \beta_i -
ep/(p-1)$, and 
\item $\delta_i = ep/(p-1) - \beta_i + \alpha_i - r_i$.
\end{itemize}
\end{prop}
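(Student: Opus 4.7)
The plan is to produce an intermediate Breuil module $\cP'$ together with morphisms $\cP \to \cP'$ and $\cP^\dag \to \cP'$ that both induce isomorphisms on generic fibres; by applying the functor $T_{\st,2}^L$ to the short exact sequences associated to $\cP, \cP', \cP^\dag$, we would then obtain an isomorphism $T_{\st,2}^L(\cP) \cong T_{\st,2}^L(\cP^\dag)$ as extensions of $\chi_1$ by $\chi_2$, which is the conclusion.

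The intermediate Breuil module I would take is $\cP' := \cP(r, a, c; e, b, d^\dag; u^{\u{z}} h)$ with $z_i := ep/(p-1) - \beta_i$. Conceptually, $\cP'$ is the pushout of $\cP$ along the Breuil module morphism $\M(s, b, d) \to \M(e, b, d^\dag)$ supplied by Lemma~\ref{lem:rank-one-maps} (sending $n \mapsto u^{\u{z}} n^\dag$), and $\cP^\dag$ is in turn the pullback of $\cP'$ along $\M(0, a, c^\dag) \to \M(r, a, c)$ (sending $m^\dag \mapsto u^{\u{\alpha}} m$). Explicitly, then, the morphism $\cP \to \cP'$ is prescribed on generators by $m \mapsto m$, $n \mapsto u^{\u{z}} n'$, and the morphism $\cP^\dag \to \cP'$ by $m^\dag \mapsto u^{\u{\alpha}} m$, $n^\dag \mapsto n'$, where $n'$ is the generator of the $\M(e, b, d^\dag)$-part of $\cP'$.

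The first step is to verify that $\cP^\dag$ and $\cP'$ are well-defined Breuil modules via Remark~\ref{rem:extension-notation}. This reduces to checking the divisibility and congruence conditions on the polynomials $u^{\u{\delta}} h$ and $u^{\u{z}} h$, which follow from the conditions on $h$ together with the key identities $\u{\delta} = \u{z} + \u{\alpha} - \u{r}$, $c_i^\dag - c_i = \alpha_i$, and $d_i - d_i^\dag = z_i$, along with the easy inequalities $\alpha_i \ge r_i$ and $\beta_i \le ep/(p-1)$ (which are consequences of the explicit formulas for $\alpha_i, \beta_i$ and of $0 \le r_j, s_j \le e$).

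The main work is to check that the two prescribed maps respect $\Fil^1$, $\phi_1$, and the descent data. The crucial $\Fil^1$ computation for $\cP^\dag \to \cP'$ is the identity $u^{\u{\alpha} - \u{r}} (u^{\u{r}} m + u^{\u{z}} \u{h} n') = u^{\u{\alpha}} m + u^{\u{\delta}} \u{h} n'$, exhibiting the image of the ``relation'' generator of $\Fil^1 \cP^\dag$ as a multiple of the analogous generator of $\Fil^1 \cP'$. Compatibility with $\phi_1$ reduces to the Frobenius-shift identities $p(\alpha_{i-1} - r_{i-1}) = \alpha_i$ and $p(s_{i-1} + z_{i-1} - e) = z_i$, both of which follow from the defining formulas and the periodicity of $r_i, s_i$ with period $f' \mid f$. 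Compatibility with descent data follows via Lemma~\ref{lem:galois-invariants} from the congruences $c_i^\dag \equiv c_i + \alpha_i$ and $d_i^\dag \equiv d_i - z_i$ modulo $e(K/L)$, which are built into the definitions. Finally, inspection of the two morphisms shows that neither has a nonzero free $k[u]/u^{ep}$-submodule in its kernel or cokernel, so \cite[Prop.~8.3]{SavittCompositio} ensures that both induce generic fibre isomorphisms. The main obstacle throughout is the delicate bookkeeping on the divisibility, degree, and congruence conditions under various shifts of $u$ and mod-$f$ indexings; all of this is ultimately driven by the single identity $\u{\delta} = \u{z} + \u{\alpha} - \u{r}$.
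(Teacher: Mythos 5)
Your proposal is correct and follows essentially the same route as the paper: your intermediate module $\cP'$ with $z_i = ep/(p-1)-\beta_i$ is precisely the paper's $\cP^\ddag$ (with $\delta^\ddag_i = ep/(p-1)-\beta_i$), and your two morphisms $\cP \to \cP'$ and $\cP^\dag \to \cP'$ coincide with the paper's $f^\ddag$ and $f^\dag$, with the conclusion drawn from \cite[Prop.~8.3]{SavittCompositio} as in the paper. The only differences are cosmetic (notation, the pushout/pullback gloss, and a more verbose recording of the bookkeeping).
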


\begin{proof}
Consider the Breuil module $\cP^\ddag = \cP(r,a,c;e,b,d^\dag;u^{\u{\delta}^{\ddag}} h)$
where $\delta^\ddag_i = ep/(p-1) - \beta_i$.  It is elementary
from Remark~\ref{rem:extension-notation} that both $\cP^\dag$ and $\cP^\ddag$
are well-defined (note that $\delta_i,\delta^\ddag_i \ge 0$ and that
$e(K/L)$ divides $e$); the key point of the calculation is that $\beta_i - s_i =
\beta_{i+1}/p \le e/(p-1)$, whence $e-s_i \le
ep/(p-1) - \beta_i$.  Let $m^\dag,n^\dag$ and $m^\ddag,n^\ddag$ denote the standard basis
elements for $\cP^\dag,\cP^\ddag$ respectively.  One checks without difficulty that
there is a map $f^\ddag : \cP \to \cP^\ddag$ sending
$$ m \mapsto m^\ddag,\qquad n \mapsto u^{ep/(p-1) - \u{\beta}} n^\ddag$$
as well as a map $f^\dag : \cP^\dag \to \cP^\ddag$ sending
$$ m^\dag \mapsto u^{\u{\alpha}} m^\ddag,\qquad n^\dag \mapsto n^\ddag.$$
Since $\ker(f^\dag),\ker(f^\ddag)$ do not contain any free
$k[u]/u^{ep}$-submodules, it follows from
\cite[Prop.~8.3]{SavittCompositio} that $T_{\st,2}^L(f^\dag)$ and
$T_{\st,2}^L(f^\ddag)$ are isomorphisms.
\end{proof}

Note that while the extension classes
$\Ext^1_{k_E[G_L]}(\chi_1,\chi_2)$ realized by $\cP$ and~$\cP^\dag$
in Proposition~\ref{prop:comparison-with-minimax} may not coincide,
they differ by at most multiplication by a $k_E$-scalar, since the
maps $f^\dag$ and $f^\ddag$ induce $k_E$-isomorphisms on the one-dimensional
sub and quotient characters.

\begin{defn}
  \label{defn:flat-extension-space}
  Let $L(\M,\N) \subseteq \Ext^1_{k_E[G_L]}(\chi_1,\chi_2)$
  denote the subspace consisting of extension classes of the form
  $T^L_{\st,2}(\cP)$ for $\cP \in \Ext^1(\M,\N)$.
\end{defn}

The following proposition gives a criterion for one space of
extensions $L(\M,\N)$ to be contained in another.

\begin{prop}
  \label{prop:containment}
  Suppose that $\M=\M(r,a,c)$ and $\M'=\M(r',a',c')$ have generic
  fibre $\chi_1$ while $\N=\M(s,b,d)$ and $\N'=\M(s',b',d')$
  have generic fibre $\chi_2$.   If there exist nonzero maps $\M \to \M'$ and $\N' \to \N$ then $L(\M',\N')
  \subseteq L(\M,\N)$.
\end{prop}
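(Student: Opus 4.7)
My plan is to realize each class in $L(\M', \N')$ as coming from an extension in $\Ext^1(\M, \N)$ via a pullback--pushout construction, following a standard pattern. Let $f \colon \M \to \M'$ and $g \colon \N' \to \N$ denote the given nonzero maps. Since these are nonzero maps between rank-one Breuil modules whose generic fibres coincide, Corollary~\ref{cor:generic-fibre-isom} (together with the discussion preceding it) shows that $T^L_{\st,2}(f) \colon \chi_1 \to \chi_1$ and $T^L_{\st,2}(g) \colon \chi_2 \to \chi_2$ are each $k_E^\times$-scalar multiples of the identity, say multiplication by $\mu, \nu \in k_E^\times$ respectively.

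Now fix any $\cP' \in \Ext^1(\M', \N')$. First I will form the fibre product $\cQ = \cP' \times_{\M'} \M$ inside $\BrMods$: as an underlying $(k \otimes_{\F_p} k_E)[u]/u^{ep}$-module it is the submodule of $\cP' \oplus \M$ consisting of pairs $(x, y)$ with $\pi(x) = f(y)$, with filtration, $\phi_1$, and descent data all inherited coordinatewise from $\cP' \oplus \M$. Next I will form the pushout $\cP$ as the quotient of $\N \oplus \cQ$ by the antidiagonal image of $\N'$ (embedded via $z \mapsto (g(z), -\iota(z))$, where $\iota \colon \N' \into \cQ$ is the canonical inclusion). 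My claim will be that $\cQ$ and $\cP$ are Breuil modules fitting into short exact sequences $0 \to \N' \to \cQ \to \M \to 0$ and $0 \to \N \to \cP \to \M \to 0$, thus producing a class $[\cP] \in \Ext^1(\M, \N)$.

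Granted this, I will apply $T^L_{\st,2}$ to the standard pullback and pushout diagrams relating $\cP'$, $\cQ$, and $\cP$. Functoriality of Yoneda $\Ext^1$, combined with the fact that $T^L_{\st,2}(f)$ and $T^L_{\st,2}(g)$ are scalar multiplications by $\mu$ and $\nu$, will yield the identity $T^L_{\st,2}([\cP]) = \mu \nu \cdot T^L_{\st,2}([\cP'])$ in $\Ext^1_{k_E[G_L]}(\chi_1, \chi_2)$. Since Baer sum in $\BrMods$ is compatible under $T^L_{\st,2}$ with Baer sum in $\Ext^1_{k_E[G_L]}(\chi_1, \chi_2)$, the set $L(\M, \N)$ is a $k_E$-subspace; so the class of $\cP'$ is a $k_E^\times$-multiple of $T^L_{\st,2}([\cP]) \in L(\M, \N)$, and hence also lies in $L(\M, \N)$. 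Letting $\cP'$ vary proves $L(\M', \N') \subseteq L(\M, \N)$.

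The main obstacle is verifying that $\cQ$ and $\cP$ really lie in $\BrMods$, since that category is not abelian. The most delicate axiom is that the image of $\phi_1$ generates the underlying module: for $\cQ$ I would argue this from the surjectivity of the two projections $\cQ \onto \cP'$ and $\cQ \onto \M$ combined with Nakayama's lemma (working modulo $u$, where $\cQ$ reduces to the fibre product of the reductions of $\cP'$ and $\M$ over that of $\M'$), and for $\cP$ from the fact that $\cP$ is generated by the images of $\N$ and of $\cQ$, both of whose $\phi_1$ generates. The remaining axioms --- that $\Fil^1$ contains $u^e$ times the module, that the descent data is well-defined and commutes with $\phi_1$, and so on --- follow coordinatewise from the corresponding axioms for $\cP'$, $\M$, and $\N$.
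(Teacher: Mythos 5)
Your pullback--pushout construction is a valid alternative route, genuinely different from the paper's proof, which (after using Corollaries~\ref{cor:isomorphism} and~\ref{cor:generic-fibre-isom} to reduce to $a=a'$, $b=b'$) directly writes down $\cP = \cP(r,a,c;s,b,d;h)$ with $\u{h} = u^{(\u{\beta}-\u{\beta}')-(\u{\alpha}-\u{\alpha}')+(\u{r}-\u{r}')}\u{h}'$, verifies via Remark~\ref{rem:extension-notation} that this is well defined, and compares generic-fibre extension classes by showing $\cP$ and $\cP'$ each has the same one as the $\cP^\dag$ of Proposition~\ref{prop:comparison-with-minimax}. Tracking through your pullback and pushout shows you are building essentially that same Breuil module, but the categorical packaging replaces the appeal to Proposition~\ref{prop:comparison-with-minimax} by functoriality of Yoneda $\Ext^1$ under the exact functor $T^L_{\st,2}$; either way, one ends up observing that the resulting class is a $k_E^\times$-multiple of $T^L_{\st,2}(\cP')$ and using that $L(\M,\N)$ is a subspace.

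There is, however, a concrete error in your verification that $\cQ$ is a Breuil module: the projection $\cQ \to \cP'$ is \emph{not} surjective in general. Since $f \colon \M \to \M'$ sends $m \mapsto \u{\delta} u^{\u{z}} m'$ with $z_i = \alpha_i' - \alpha_i$, whenever some $z_i > 0$ the map $f$ is not surjective, and any $x \in \cP'$ with $\pi(x) \notin \im(f)$ has no preimage in $\cQ$; consequently the Nakayama argument as stated does not establish surjectivity of $\phi_1$. The desired conclusion is still true, but should be argued from what actually holds: the projection $\cQ \onto \M$ is a strict surjection compatible with $\phi_1$, so the submodule of $\cQ$ generated by $\phi_1(\Fil^1\cQ)$ surjects onto $\M$; and the inclusion $\N' \into \cQ$ is strict (one checks $\Fil^1\N' = \N'\cap\Fil^1\cQ$), so $\phi_1(\Fil^1\cQ) \supseteq \phi_1(\Fil^1\N')$ generates the copy of $\N'$ inside $\cQ$; together these force $\phi_1(\Fil^1\cQ)$ to generate $\cQ$. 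You should also argue freeness of the underlying modules of $\cQ$ and $\cP$ explicitly rather than ``coordinatewise'': $\cQ$ is the kernel of the surjection $\cP'\oplus\M\onto\M'$ onto a free module, hence a direct summand of a free module; and the antidiagonal copy of $\N'$ inside $\N\oplus\cQ$ is a direct summand because $\iota\colon\N'\into\cQ$ splits ($\M$ being free), so $\cP$ is also free. With these repairs the proposal is sound.
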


\begin{proof}
We show more generally that the conclusion holds provided that $$\max(\alpha_{i+1}/p - \beta_{i},\alpha_{i}-\beta_{i+1}/p - e) \le 
\max(\alpha'_{i+1}/p - \beta'_{i},\alpha'_{i}-\beta'_{i+1}/p - e)$$
for all $i$.  (This inequality is easily checked when there exist maps $\M \to \M'$ and $\N' \to \N$, because $\alpha_i \le
 \alpha_i'$ and $\beta'_i \le \beta_i$ for all $i$ in this case.)

By Corollaries~\ref{cor:isomorphism} and~\ref{cor:generic-fibre-isom}
we may suppose without loss of generality that $a=a'$ and $b=b'$.  Suppose that $\cP' = \cP(r',a,c';s',b,d';h')$.  The given inequality is equivalent to
$$(\beta_i-\alpha_i+r_i) - (\beta'_i-\alpha'_i+r'_i) +
\max(0,r'_i+s'_i-e) \ge \max(0,r_i+s_i-e)$$
which is precisely the condition that is required to make the
assigments $\u{h} = u^{(\u{\beta}-\u{\beta}')-(\u{\alpha}-\u{\alpha}') +
  (\u{r}-\u{r}')} \u{h}' $
and $ \cP = \cP(r,a,c;s,b,d;h)$ well-defined.  Then $\cP$ and~$\cP'$ both have the same generic fibre as the extension $\cP^{\dag}$
of Proposition~\ref{prop:comparison-with-minimax}, and so the generic
fibre of $\cP'$ is also in $L(\M,\N)$.
\end{proof}

We remark that Proposition~\ref{prop:containment} should also follow
from a scheme-theoretic closure argument, but we give the above
argument for the sake of expedience (we will need
Proposition~\ref{prop:comparison-with-minimax} again in
Section~\ref{sec:mainsetting}).

\section{Models of principal series type}
\label{sec:models of principal series type}

We retain the notation and setting of the previous section; in
particular recall that we have a running assumption that $e(K/L)$ is
divisible by $p^{f'}-1$.   Fix a pair of characters $\chi_1,\chi_2 :
G_L \to k_E^{\times}$.

Recall that a two-dimensional \emph{Galois type} is (the isomorphism
class of) a representation
$\tau : I_L \to \GL_2(\Zpbar)$ that extends to a representation of $G_L$ and
whose kernel is open.  We say that $\tau$ is a \emph{principal series
  type} if $\tau \simeq \lambda \oplus \lambda'$ where
$\lambda,\lambda'$ both extend to representations of $G_L$. 

In this section we use the results of
Section~\ref{sec:extensions-Breuil-modules}  to associate to the triple
$(\chi_1,\chi_2,\tau)$ a subspace $L(\chi_1,\chi_2,\tau) \subseteq
\Ext^1_{k_E[G_L]}(\chi_1,\chi_2)$.  We will see that $L(\chi_1,\chi_2,\tau)$
contains every extension of $\chi_1$ by
$\chi_2$ that arises as the reduction mod $p$ of a potentially
Barsotti-Tate representation of type $\tau$; in fact we will think of
$L(\chi_1,\chi_2,\tau)$ as a finite flat avatar for
the collection of such extensions.  In
Section~\ref{sec:models} we define the set $L(\chi_1,\chi_2,\tau)$ and
prove that it is a vector space (provided that it is nonempty).  In Section~\ref{sec:mainsetting} we
restrict to the main local setting of our paper and study the spaces
$L(\chi_1,\chi_2,\tau)$ in detail in that setting; for instance we compute the
dimension of these spaces in many cases.

\subsection{Maximal and minimal models of type $\tau$} \label{sec:models}

Raynaud \cite{Raynaud} shows that if one fixes a finite flat
$p$-torsion group scheme $G$ over $K$, then the set of finite flat group
schemes over $\cO_K$ with generic fibre $G$ has the structure of a
lattice; in particular it possesses maximal and minimal elements. This 
has proved to be a valuable  observation, and variants of it have recurred in
numerous contexts (see \cite[Lem.~4.1.2]{BCDT}, \cite[\S8]{SavittCompositio},
\cite[\S3.3]{MR2543474}, and \cite[\S5.3]{GLS13} to name a few).  Let
$\tau$ be a principal series type. In
this subsection we introduce the notion of a model of type $\tau$ (see
Definition~\ref{defn:model-of-type-tau} below) and prove the existence
of maximal and minimal models of type $\tau$. 

\begin{defn}
  \label{defn:chi-dual}
  Write $\chi = \chi_1 \chi_2$.  If $\M =
  \M(r,a,c)$ has generic fibre $\chi_1$, define the \emph{$\chi$-dual} of $\M$ to be the unique Breuil module
  $\M^{\vee}_{\chi} = \M(s,b,d)$ with generic fibre $\chi_2$ such that
  $r_i + s_i = e$ for all $i$.  The existence of $\M^{\vee}_{\chi}$
  is implied by the paragraph following
  Lemma~\ref{lem:tst-evaluation}.  
\end{defn}

If $\tau \simeq \lambda \oplus \lambda'$ is a principal series type, we let
$\overline{\lambda},\overline{\lambda}'$ denote the reductions
of $\lambda,\lambda'$ modulo the maximal ideal of $\Zpbar$; we  will
usually abuse notation and write $\lambda,\lambda'$ where we mean
$\overline{\lambda},\overline{\lambda}'$.

\begin{defn}
  \label{defn:model-of-type-tau}
  Let $\tau \simeq \lambda \oplus \lambda'$ be a 
  principal
  series 
  type. 
  We say that $\M(r,a,c)$ is a \emph{model of type $\tau$} if
  $\sigma_i \circ \etabar^{c_i} \in \{\lambda,\lambda'\}$ for all
  $i$. 
  Note that if $(\chi_1 \chi_2)|_{I_{G_L}} = \lambda \lambda'
  \epsilonbar$ and $\M(r,a,c)$ is a model of type $\tau$ with generic
  fibre $\chi_1$, then its $\chi$-dual $\M^{\vee}_{\chi} = \M(s,b,d)$ is a
  model of type $\tau$ with generic fibre $\chi_2$, and moreover
  $\{\sigma_i\circ\etabar^{c_i} , \sigma_i\circ\etabar^{d_i}\} =
  \{\lambda,\lambda'\}$ for all $i$.
\end{defn}

\begin{defn}
  \label{defn:lflat}
  We define $$ L(\chi_1,\chi_2,\tau) = \cup_{\M,\N} L(\M,\N) $$
 as $\M,\N$ range over all pairs of models of type $\tau$ with generic fibre
 $\chi_1,\chi_2$ respectively, and such that  $\{\sigma_i\circ\etabar^{c_i} , \sigma_i\circ\etabar^{d_i}\} =
  \{\lambda,\lambda'\}$ for all $i$.
\end{defn}

It follows, for instance from~\cite[Cor.~5.2]{MR2822861}, that
$L(\chi_1,\chi_2,\tau)$ contains all extensions of $\chi_1$ by
$\chi_2$ that arise as the reduction mod $p$ of a potentially
Barsotti-Tate representation of $G_L$ of type $\tau$.  Note that if $L(\chi_1,\chi_2,\tau)
\neq \varnothing$ then  $(\chi_1 \chi_2)|_{I_{G_L}} = \lambda \lambda'
\epsilonbar$.

\begin{prop}
  \label{prop:minimal-exists}
Let $\mathcal{S}$ be the set of all $\M(r,a,c)$ of type
$\lambda\oplus \lambda'$ with generic fibre~$\chi$.  If $\mathcal{S}$ is
nonempty, then it has a minimal and a maximal element; that is, there
are Breuil modules $\M_{-},\M_{+} \in \mathcal{S}$ such that for any $\M \in
\mathcal{S}$ there exist nonzero maps $\M_{-} \to \M$ and $\M \to \M_{+}$.
\end{prop}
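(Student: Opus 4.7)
The plan is to show that $\mathcal{S}$ is finite and admits binary joins in $\mathcal{S}$, which produces the maximum $\M_+$, and then to produce the minimum $\M_-$ via a duality argument. The main obstacle is the verification that the join supplied by Proposition~\ref{prop:max-model} actually preserves the type-$\tau$ condition defining $\mathcal{S}$; this reduces to an explicit congruence check but is the substantive content beyond invoking earlier results.

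For finiteness: by Lemma~\ref{lem:rank-one} each $\M(r,a,c) \in \mathcal{S}$ has $r_i \in \{0, \ldots, e\}$, so only finitely many sequences $(r_i)$ arise. For a fixed such sequence, Lemma~\ref{lem:tst-evaluation} pins down $c_i + \alpha_i \pmod{e(K/L)}$ and $\Nm(\u{a})$ in terms of the generic fibre $\chi$, so $c_i$ and $\Nm(\u{a})$ are determined; Corollary~\ref{cor:isomorphism} then shows the isomorphism class of $\M(r,a,c)$ is determined.

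Next I produce joins in $\mathcal{S}$. Given $\M = \M(r,a,c)$ and $\N = \M(s,b,d)$ in $\mathcal{S}$, Corollary~\ref{cor:generic-fibre-isom} supplies the hypotheses of Proposition~\ref{prop:max-model}, yielding $\cP = \M(t,a,v)$ with $\gamma_i = \max(\alpha_i, \beta_i)$, $v_i \equiv c_i + (\alpha_i - \gamma_i) \pmod{e(K/L)}$, and nonzero maps $\M \to \cP$ and $\N \to \cP$. I claim $\cP \in \mathcal{S}$: when $\gamma_i = \alpha_i$ one has $v_i \equiv c_i \pmod{e(K/L)}$, so $\sigma_i \circ \etabar^{v_i} = \sigma_i \circ \etabar^{c_i} \in \{\lambda,\lambda'\}$; when $\gamma_i = \beta_i$, the congruence $\beta_i - \alpha_i \equiv c_i - d_i \pmod{e(K/L)}$ supplied by Proposition~\ref{prop:max-model} gives $v_i \equiv d_i \pmod{e(K/L)}$, so again $\sigma_i \circ \etabar^{v_i} \in \{\lambda,\lambda'\}$. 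Iterating this binary-join construction in the finite set $\mathcal{S}$ yields a maximum element $\M_+$.

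For the minimum, fix a character $\psi : G_L \to k_E^\times$ with $\psi|_{I_L} = \lambda \lambda' \epsilonbar$; such a $\psi$ exists because each of $\lambda, \lambda'$ extends to a character of $G_L$ by the principal series hypothesis on $\tau$. For each $\M \in \mathcal{S}$, applying Definition~\ref{defn:chi-dual} with $\psi$ in the role of $\chi_1\chi_2$ produces $\M^\vee_\psi$ with generic fibre $\psi \chi^{-1}$ and $r_i + s_i = e$; by the remark in Definition~\ref{defn:model-of-type-tau}, $\M^\vee_\psi$ lies in the analogous set $\mathcal{S}^\vee$ of type-$\tau$ models (which is therefore nonempty). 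Since $r_i + s_i = e$ forces $\alpha_i + \beta_i = pe/(p-1)$, Lemma~\ref{lem:rank-one-maps} shows that $\M \mapsto \M^\vee_\psi$ is an order-reversing bijection $\mathcal{S} \to \mathcal{S}^\vee$. Applying the preceding paragraph to $\mathcal{S}^\vee$ produces a maximum element there, whose $\psi$-dual is the desired minimum $\M_-$ of $\mathcal{S}$.
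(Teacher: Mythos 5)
Your proposal is correct and follows essentially the same route as the paper: reduce to existence of a maximum via duality, use finiteness of $\mathcal{S}$ to reduce to binary joins, and then observe that the join $\cP = \M(t,a,v)$ supplied by Proposition~\ref{prop:max-model} satisfies $v_i \in \{c_i, d_i\}$ modulo $e(K/L)$, hence stays of type $\lambda \oplus \lambda'$. You fill in two points the paper leaves implicit — an explicit finiteness argument for $\mathcal{S}$ via Lemma~\ref{lem:tst-evaluation} and Corollary~\ref{cor:isomorphism}, and the duality step (identifying a global character $\psi$ with $\psi|_{I_L} = \lambda\lambda'\epsilonbar$ so that $\M \mapsto \M^\vee_\psi$ gives an order-reversing bijection onto the set of type-$\tau$ models with generic fibre $\psi\chi^{-1}$) — but the substantive verification is the same as the paper's.
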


\begin{proof}
  By duality it suffices to prove the existence of $\M_{+}$.  For
  this, since
  $\cS$ is finite, it is enough to prove that any $\M,\N \in \cS$ have
  an upper bound in $\cS$, i.e.~that
  there exists $\cP \in \cS$ together with nonzero maps $\M \to \cP$
  and $\N \to \cP$.  

Since $\M,\N$ have the same generic fibre, the
  conditions of Proposition~\ref{prop:max-model} are satisfied, and we
  can form
  $\cP = \cP(t,a,v)$ as in the last sentence of the proposition.   Note that if
  $\gamma_i = \alpha_i$ then $v_i = c_i$, while if $\gamma_i =
  \beta_i$ then $v_i = d_i$ (see the last sentence of the proof of
  Proposition~\ref{prop:max-model}, for instance).   Thus
  $\sigma_i \circ \etabar^{v_i} \in \{  \sigma_i \circ \etabar^{c_i},  \sigma_i \circ \etabar^{d_i}
  \} \subseteq \{\lambda, \lambda'\}$, and we conclude that  $\cP \in \cS$.  
\end{proof}

\begin{remark}
  \label{rem:much-more-general}
  An argument identical to the above can be used to prove a much more general
  statement.  Namely, we can fix sets $S_i \subseteq \{ \sigma_i \circ
  \etabar^c \, : \, c \in \Z\}$ for each~$i$, and consider the set $\cS$ of Breuil modules
  $\M(r,a,c)$ with
  generic fibre $\chi$ such that $\sigma_i \circ \etabar^{c_i} \in
  S_i$ for all $i$; then if $\cS$ is nonempty, it has a maximal and
 a minimal element. 
\end{remark}

\begin{cor}
  \label{cor:is-vector-space}
  If $L(\chi_1,\chi_2,\tau)$ is nonempty, then it is a vector space.  
\end{cor}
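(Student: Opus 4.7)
The plan is to reduce the union defining $L(\chi_1,\chi_2,\tau)$ to a single space $L(\M_-,\N_+)$; since each $L(\M,\N)$ is already a subspace of $\Ext^1_{k_E[G_L]}(\chi_1,\chi_2)$ by Definition~\ref{defn:flat-extension-space}, the conclusion follows at once. Because $L(\chi_1,\chi_2,\tau) \ne \varnothing$, the set $\cS_1$ of models of type $\tau$ with generic fibre $\chi_1$ is nonempty, and (as noted after Definition~\ref{defn:lflat}) the compatibility $\chi|_{I_L} = \lambda\lambda'\epsilonbar$ holds, so $\chi$-duality sends $\cS_1$ bijectively onto the analogous set $\cS_2$ for $\chi_2$.

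I then apply Proposition~\ref{prop:minimal-exists} to $\cS_1$ to extract a minimal element $\M_-$, and I set $\N_+ := (\M_-)^\vee_\chi$; by Definition~\ref{defn:model-of-type-tau} this lies in $\cS_2$ and $(\M_-,\N_+)$ satisfies the validity condition in Definition~\ref{defn:lflat}. A short computation from the definition of $\alpha_i$ yields the identity $\alpha_i(\M^\vee_\chi) = pe/(p-1) - \alpha_i(\M)$, which combined with the ``$\beta_i \ge \alpha_i$'' criterion of Remark~\ref{rem:if-divisible-remark} shows that $\chi$-duality reverses the partial order of ``there exists a nonzero map'' among rank one Breuil modules with fixed generic fibre. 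Consequently $\N_+$ is the maximum of $\cS_2$, so that for every $\N \in \cS_2$ there is a nonzero map $\N \to \N_+$.

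Given any valid pair $(\M,\N)$ appearing in the union of Definition~\ref{defn:lflat}, minimality of $\M_-$ and maximality of $\N_+$ supply nonzero maps $\M_- \to \M$ and $\N \to \N_+$, and Proposition~\ref{prop:containment} yields $L(\M,\N) \subseteq L(\M_-,\N_+)$. Taking the union gives $L(\chi_1,\chi_2,\tau) \subseteq L(\M_-,\N_+)$, and the reverse inclusion is immediate since $(\M_-,\N_+)$ itself contributes to the union. The only real obstacle is the $\chi$-duality order-reversal verification of the second paragraph; once that is in place, everything follows by direct assembly of the structural machinery already developed.
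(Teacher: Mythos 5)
Your proposal is correct and follows the same route as the paper's proof: extract the minimal model $\M_-$ of type $\tau$ with generic fibre $\chi_1$ via Proposition~\ref{prop:minimal-exists}, observe that its $\chi$-dual is the maximal model with generic fibre $\chi_2$, and invoke Proposition~\ref{prop:containment} to collapse the union onto $L(\M_-,(\M_-)^\vee_\chi)$. The one difference is cosmetic: where the paper dispatches the duality step with ``It follows easily that $\M^{\vee}_{\chi}$ must be the maximal model,'' you actually carry out the verification, computing $\alpha_i(\M^\vee_\chi) = pe/(p-1) - \alpha_i(\M)$ and combining it with the ``$\beta_i \ge \alpha_i$'' criterion of Remark~\ref{rem:if-divisible-remark} to see that $\chi$-duality is order-reversing on each fibre of $T^L_{\st,2}$ — a welcome clarification, and the identity checks out.
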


\begin{proof}
  Suppose that $L(\chi_1,\chi_2,\tau)$ is nonempty.  By
  Proposition~\ref{prop:minimal-exists} there exists a minimal model
  $\M$ of type $\tau$ with generic fibre $\chi_1$.  It follows 
  easily 
   that $\M^{\vee}_{\chi}$ must be the maximal model of
  type $\tau$ with generic fibre $\chi_2$.
  Proposition~\ref{prop:containment} implies that $L(\chi_1,\chi_2,\tau)
  = L(\M,\M^{\vee}_{\chi})$, and the lemma follows.
\end{proof}

\subsection{The local setting.}\label{sec:mainsetting}

For remainder of the paper we suppose that $K/L$ is totally ramified
of degree $p^{f'} - 1$, so that $K = L(\pi)$, $f=f'$, and $e(K/L) = p^f-1$.
Recall that in this setting we have
 $\omega_i = (\sigma_i \circ \etabar) |_{I_L}$. The characters
 $\omega_i$ form a
 fundamental system of characters of niveau $f$, and we write
$$ \lambda = \prod_{i=0}^{f-1} \omega_i^{\nu_i},\qquad \lambda' =
\prod_{i=0}^{f-1} \omega_i^{\nu'_i}$$ with $\nu_i,\nu'_i \in [0,p-1]$
for all $i$; when either $\lambda$ or $\lambda'$ is trivial we require $\nu_i =
p-1$ for all $i$  or $\nu_i'=p-1$ for all $i$, respectively.  
Write $\lambda'/\lambda = \omega_0^{\delta}$ and define integers $\delta_i \in
[0,p-1]$ by
$\lambda'/\lambda = \prod_{i=0}^{f-1} \omega_i^{\delta_i}$,
with not all $\delta_i$ equal to $p-1$.   Let $[p^i \delta]$ be the
unique integer in the interval $[0,e(K/L)-1]$ congruent to $p^i \delta
\pmod{e(K/L)}$.

From the equality $\prod_{i=0}^{f-1} \omega_i^{\delta_i + \nu_i} =
\prod_{i=0}^{f-1} \omega_i^{\nu'_i}$ together with our bounds on the
$\delta_i,\nu_i,\nu'_i$, it follows that there exists a unique collection of integers $\gamma_i \in
\{0,1\}$ such that $\nu'_i = \delta_i + \nu_i - p \gamma_{i-1} + \gamma_{i}$.
We write $C = \{ i \, : \, \gamma_i = 1\}$ (the symbol $C$ here stands for ``carries'').

With the above notation,
we prove the following.

\begin{prop}
  \label{prop:generic-fibre-calculation}
   Suppose that $\M = \M(r,a,c)$ is a model of type $\lambda \oplus
  \lambda'$.  Let $J = \{ i : \sigma_i \circ \etabar^{c_i} \neq 
\lambda' \} \subseteq \{0,\ldots,f-1\}$.  Define $x_i$ by the formula
$$ r_i = \begin{cases}
 x_i e(K/L) & \text{if $i,i+1\in J$ or $i,i+1 \not\in J$}  \\
 x_i e(K/L) + [p^i \delta] & \text{if $i \in J$, $i+1 \not\in J$, $i
   \not\in C$} \\
 x_i e(K/L) + (e(K/L) - [p^i \delta]) & \text{if $i \not\in J$, $i+1 \in J$, $i
   \in C$} \\
 x_i e(K/L) - [p^i \delta] & \text{if $i \not\in J$, $i+1 \in J$, $i
   \not\in C$}\\
x_i e(K/L) - (e(K/L) - [p^i \delta]) & \text{if $i \in J$, $i+1 \not\in J$, $i
  \in C$}. 
\end{cases}$$
Then each $x_i$ is an integer in the interval $[0,e']$, and if $\lambda\neq
\lambda'$ then $x_i \neq e'$ in the second and third cases, while
$x_i\neq 0$ in the fourth and fifth cases. 
Moreover the generic fibre
of $\M$, on inertia, is equal to 
$$\prod_{i \in J} \omega_i^{\nu_i} \prod_{i \not\in J}
\omega_i^{\nu'_i} \prod_{i = 0}^{f-1} \omega_i^{x_i}.$$
\end{prop}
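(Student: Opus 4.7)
The plan is to split the proof into two main steps: first determining $r_i$ and the ranges of $x_i$ from the hypotheses, then computing the generic fibre via Lemma~\ref{lem:tst-evaluation}.

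Since $K/L$ is totally ramified of degree $e(K/L) = p^f - 1$, we have $(\sigma_i \circ \etabar)|_{I_L} = \omega_i$, a character of $I_L$ of order exactly $e(K/L)$. Thus the hypothesis $\sigma_i \circ \etabar^{c_i} \in \{\lambda, \lambda'\}$ pins down $c_i$ modulo $e(K/L)$: writing $\tilde{c}_i^{\lambda}$, $\tilde{c}_i^{\lambda'}$ for the unique elements of $[0, e(K/L))$ with $\omega_i^{\tilde c_i^\lambda} = \lambda$ and $\omega_i^{\tilde c_i^{\lambda'}} = \lambda'$ on $I_L$, we have $c_i \equiv \tilde c_i^\lambda$ if $i \in J$ and $c_i \equiv \tilde c_i^{\lambda'}$ otherwise. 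The relations $\omega_{i+1}^p = \omega_i$ and $\lambda'/\lambda = \omega_0^\delta$ imply $p\tilde c_i^{\lambda} \equiv \tilde c_{i+1}^\lambda$ and $\tilde c_i^{\lambda'} - \tilde c_i^\lambda \equiv [p^i \delta] \pmod{e(K/L)}$. Substituting these into the recursion $c_{i+1} \equiv p(c_i + r_i) \pmod{e(K/L)}$ from Lemma~\ref{lem:rank-one} yields the residue of $r_i$ modulo $e(K/L)$ in each case: namely, $0$ when $i, i+1$ are either both in $J$ or both outside it; $[p^i\delta]$ when $i \in J$, $i+1 \notin J$; and $-[p^i\delta] \equiv e(K/L) - [p^i\delta]$ when $i \notin J$, $i+1 \in J$. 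The five cases of the proposition merely record the two possible choices of representative (differing by $e(K/L)$) for the last two residue classes, and the bounds on $x_i$ then follow from $r_i \in [0, e] = [0, e' \cdot e(K/L)]$ by direct inspection, since the correction has absolute value strictly less than $e(K/L)$.

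For the generic fibre, Lemma~\ref{lem:tst-evaluation} identifies it (restricted to $I_L$) with $\omega_0^{c_0 + \alpha_0}$. Writing $r_j = x_j e(K/L) + \rho_j$ with $\rho_j$ the correction term, I decompose $\alpha_0 = A_0 + B_0$, where $A_0 = \sum_{j=0}^{f-1} p^{f-j} x_j$ and $B_0 = p\sum_j p^{f-1-j} \rho_j/(p^f-1)$; here $B_0 \in \Z$ by the identity $\sum_j p^{f-1-j} r_j \equiv 0 \pmod{p^f-1}$ obtained by unwinding the recursion cyclically (as in Remark~\ref{rem:if-divisible-remark}). The identity $\omega_0^{p^{f-j}} = \omega_j$ gives $\omega_0^{A_0} = \prod_j \omega_j^{x_j}$, accounting for the $\prod \omega_i^{x_i}$ factor in the target. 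It therefore remains to verify that $\omega_0^{c_0 + B_0} = \prod_{i \in J}\omega_i^{\nu_i} \prod_{i \notin J} \omega_i^{\nu'_i}$.

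For this last identity, the base-$p$ expansion of $c_0$ with cyclically-shifted digits ($\nu_{-k}$ if $0 \in J$, otherwise $\nu'_{-k}$, indices modulo $f$) reduces the goal to the congruence $B_0 \equiv \sum_k p^{-k}(\mu_k - \mu^{(0)}_k) \pmod{p^f-1}$, where $\mu_k \in \{\nu_k, \nu'_k\}$ depends on whether $k \in J$, while $\mu^{(0)}_k$ depends uniformly only on whether $0 \in J$. To evaluate $B_0$, I would use the uniform expression $\rho_j = s_j\bigl([p^j\delta] - \mathbf{1}_C(j)\cdot e(K/L)\bigr)$ with $s_j := \mathbf{1}_J(j) - \mathbf{1}_J(j+1) \in \{-1,0,+1\}$ (where $\mathbf{1}_J, \mathbf{1}_C$ are the indicator functions), a uniform rewriting that covers all five cases. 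Substituting this and expanding $[p^j\delta] = p^j\delta - \lfloor p^j\delta/(p^f-1)\rfloor(p^f-1)$, the telescoping identity $\sum_j s_j = 0$ cancels the fractional ``$p^f\delta/(p^f-1)$'' contributions and produces an explicit integer expression for $B_0$. The final match to the target is then driven by the defining relation $\nu'_i - \nu_i = \delta_i - p\gamma_{i-1} + \gamma_i$, which converts the $\mathbf{1}_C$-contributions in $B_0$ into the digit differences $\mu_k - \mu_k^{(0)}$. The main obstacle is precisely this last bookkeeping: the distinction between ``$i \in C$'' and ``$i \notin C$'' in the parameterization of $r_i$ is exactly what makes the formula emerge in terms of the digits $\nu_i, \nu'_i$ rather than raw residues of $p^i\delta$ modulo $e(K/L)$.
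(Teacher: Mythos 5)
Your reduction is correct and lines up with the paper's strategy: both proofs apply Lemma~\ref{lem:tst-evaluation} to identify the inertial generic fibre with $\omega_0^{c_0+\alpha_0}$, split off the $e(K/L)$-divisible part of the $r_i$'s to produce the factor $\prod_i\omega_i^{x_i}$, and then reduce to a congruence involving the digits $\nu_i,\nu'_i$, $\delta_i$ and the carry set $C$. The paper's proof differs only superficially (it works with $y_i$ chosen independently of $C$ and relates them to $x_i$ at the very end, while you fold $C$ in from the start via the uniform $\rho_j = s_j([p^j\delta]-\mathbf{1}_C(j)e(K/L))$); your reformulation is cleaner and the derivations of the five cases and the bounds on $x_i$ are fine.

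However, the proof as written is not complete. The entire weight of the statement lies in verifying that
$$\omega_0^{\,c_0 + B_0} \;=\; \prod_{i\in J}\omega_i^{\nu_i}\,\prod_{i\notin J}\omega_i^{\nu'_i},$$
and your last paragraph is a plan for that computation rather than the computation itself: ``I would use $\ldots$'', ``the telescoping identity $\ldots$ produces an explicit integer expression for $B_0$'', ``the final match $\ldots$ is then driven by $\ldots$''. You identify the correct ingredients (the expansion $[p^j\delta]=\delta_j + p^{f-1}\delta_{j+1}+\cdots+p\delta_{j-1}$, the vanishing of $\sum_j s_j$, and the defining relation $\nu'_i-\nu_i=\delta_i-p\gamma_{i-1}+\gamma_i$), and you even flag that ``the main obstacle is precisely this last bookkeeping'' — which is exactly the part you never carry out. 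In the paper this is where the real work happens: one must track, digit by digit, the coefficient of each $\delta_j$ in $\sum_i p^{f-i}r_i$ (finding it is $0$ when $j\in J$ and $(p^{2f-j}-p^{f-j})$ when $j\notin J$, in the case $0\in J$), then substitute via the $\gamma$-relation, then check the two cases $0\in J$ and $0\notin J$ give the same answer using $\prod_i\omega_i^{p\gamma_{i-1}-\gamma_i}=1$, and only at the very end convert from $y_i$ to $x_i$ via $C$. None of these cancellations are trivial, and an error in any one of them (a sign, a shift by one in the index of $\gamma$, a missed carry across the $p^f$ boundary) would change the final exponent of some $\omega_i$. To turn your sketch into a proof you would need to actually expand $B_0$, execute the telescoping, and exhibit the term-by-term match with $\sum_k p^{-k}(\mu_k-\mu_k^{(0)})$ modulo $p^f-1$, rather than assert that the ingredients suffice.
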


\begin{remark}
  \label{rem:smith}
  Note that $J = \{ i : \sigma_i \circ \etabar^{c_i} =
\lambda \}$ unless $\lambda = \lambda'$, in which case $J = \varnothing$.
 The special case of Proposition~\ref{prop:generic-fibre-calculation}
  where $\lambda = 1$ is given in
  \cite[\S2.2.1]{smiththesis}. The proof in
   \cite[\S2.2.1]{smiththesis} is essentially the same as the one we
  give here, but the statement of the result when $\lambda=1$ is somewhat simpler
  because $i \in C$ for all $i$.
\end{remark}

\begin{proof}[Proof of Proposition~\ref{prop:generic-fibre-calculation}]  The case $\lambda=\lambda'$ is straightforward (note
  that $\delta=0$, while $J = \varnothing$).  Assume for the rest of the proof
  that $\lambda\neq\lambda'$.  According to the definition of $J$ we
  have $p^{f-i} c_i \equiv \sum_{j=0}^{f-1}
  p^{f-j} \nu_j$ if $i \in J$ and  $p^{f-i} c_i \equiv \sum_{j=0}^{f-1}
  p^{f-j} \nu'_j$ if $i \not \in J$.
From the congruence $c_{i+1} \equiv
p(c_i + r_i) \pmod{e(K/L)}$ together with the definitions preceding
the statement of the Proposition, it follows that there exist integers
$y_i$ so that
$$r_i = \begin{cases}
 y_i e(K/L) & \text{if $i,i+1\in J$ or $i,i+1 \not\in J$}  \\
 y_i e(K/L) + [p^i \delta] & \text{if $i \in J$, $i+1 \not\in J$} \\
 y_i e(K/L) - [p^i \delta] & \text{if $i \not\in J$, $i+1 \in J$}.
\end{cases}$$
Since $r_i \in [0,e]$ for all $i$ we have in particular that $y_i \in [0,e']$, with $y_i \neq e'$ if
$i \in J, i+1 \not\in J$ and $y_i \neq 0$ if $i \not\in J, i+1\in J$.

From this formula for the $r_i$'s we calculate that
\begin{equation}\label{eq:first-step} \sum_{i=0}^{f-1} p^{f-i} r_i=
  \sum_{i=0}^{f-1} p^{f-i} y_i e(K/L) +
 \sum_{i \in J, i+1 \not\in J} p^{f-i} [p^i \delta] - \sum_{i \not\in
   J,i+1 \in J} p^{f-i} [p^i \delta].\end{equation}
Moreover we have $[p^i \delta] = \delta_i + p^{f-1} \delta_{i+1} + \cdots + p
\delta_{i-1}.$  Suppose that $0 \in J$.   
Let us compute the coefficient of $\delta_j$ on
the right-hand side of \eqref{eq:first-step}.  We see that $p^{f-i}[p^i \delta]$ contains a term of the form
$p^{f-j} \delta_j$ if $i \ge j$ and $p^{2f-j} \delta_j$ if $i < j$.
If $j \in J$ then the number of elements $i \in [0,j-1]$ such that $i \in J,
i+1\not\in J$ is equal to the number of elements $i \in [0,j-1]$ such that $i \not\in J,
i+1 \in J$, and similarly for the interval $[j,f-1]$.  It follows in
this case that
$\delta_j$ does not appear on the right-hand side of
\eqref{eq:first-step}.   If $j \not\in J$ then instead the
contribution of $\delta_j$ to the right-hand side of
\eqref{eq:first-step} is $(p^{2f-j} - p^{f-j})\delta_j$.  We conclude
that
$$ \alpha_0 = \frac{1}{p^f-1} \sum_{i=0}^{f-1} p^{f-i} r_i =
\sum_{i=0}^{f-1} p^{f-i} y_i + \sum_{i \not\in J} p^{f-i} \delta_i$$
and applying Lemma~\ref{lem:tst-evaluation} we find that the generic
fibre of $\M$, on inertia, is equal to 
$ \prod_{i=0}^{f-1} \omega_i^{\nu_i} \prod_{i \not\in J}
\omega_i^{\delta_i} \prod_{i=0}^{f-1} \omega_i^{y_i}$, which
rearranges 
to  \begin{equation}\label{eq:second-step}\prod_{i \in J}
\omega_i^{\nu_i} \prod_{i \not\in J} \omega_i^{\nu'_i} \prod_{i
  \not\in J} \omega_i^{p\gamma_{i-1} - \gamma_{i}} \prod_{i=0}^{f-1}
\omega_i^{y_i} \end{equation}
by substituting for
$\delta_i$ using the
defining formula for the $\gamma_i$'s.
An analogous calculation in the case $0 \not\in J$ yields the formula 
$$ \prod_{i \in J}
\omega_i^{\nu_i} \prod_{i \not\in J} \omega_i^{\nu'_i} \prod_{i
  \in J} \omega_i^{-p\gamma_{i-1} + \gamma_{i}} \prod_{i=0}^{f-1}
\omega_i^{y_i}$$
But $\prod_{i
  \not\in J} \omega_i^{p\delta_{i-1} - \delta_{i}} = \prod_{i
  \in J} \omega_i^{-p\delta_{i-1} + \delta_{i}}$ since
$\prod_{i=0}^{f-1} \omega_i^{p\delta_{i-1}-\delta_i} = 1$, so in fact the formula
\eqref{eq:second-step} is valid in all cases.  From the definition of
the set $C$ we can rewrite \eqref{eq:second-step}  as 
$$ \prod_{i \in J}
\omega_i^{\nu_i} \prod_{i \not\in J} \omega_i^{\nu'_i} \prod_{i \in C,
  i+1 \not\in J} \omega_i \prod_{i \in C,i \not\in J} \omega_i^{-1}
\prod_{i=0}^{f-1} \omega_i^{y_i}.$$
Now observe that with $x_i$ as in the statement of the Proposition we have $$x_i = \begin{cases} y_i+1 & \text{if $i \in J,i+1\not\in J,i\in
    C$} \\ y_i - 1 & \text{if $i\not\in J, i+1 \in J, i\in C$} \\
 y_i & \text{otherwise},\end{cases}$$
and the rest of the Proposition follows.
\end{proof}

\begin{defn}
  \label{defn:allowable}
   If $x_0,\ldots,x_{f-1}$ are integers in the interval $[0,e']$ with
   $x_i \neq e'$ whenever $i\in J,i+1 \not\in J,i \not\in C$ or
   $i\not\in J, i+1 \in J,i \in C$, and $x_i \neq 0$ whenever $i\in J,i+1 \not\in J,i \in C$ or
   $i\not\in J, i+1 \in J,i \not\in C$, we say that the $x_i$'s are
   \emph{allowable} for~$J$.  (Properly speaking we should say that
   they are allowable for $J$ and $C$, but~$C$ will remain fixed in
   any calculation.)  Observe  
    that for every choice of $J$ together  with a collection of
    $x_i$'s that are allowable for $J$,  there exists a model of type
   $\lambda \oplus \lambda'$ as in
   Proposition~\ref{prop:generic-fibre-calculation} that possesses those invariants.
\end{defn}

\begin{prop}
  \label{prop:minimal-trivial}
  Suppose that $\nu'_i \in [p-1-e',p-1]$ and $\nu_i \le \nu'_i$ for all
  $i$. 
  \begin{enumerate}
  \item There exists a model of type $\lambda \oplus \lambda'$
  with trivial generic fibre.

\item 
  The minimal model of type $\lambda
  \oplus \lambda'$ with trivial generic fibre is $\M = \M(r,1,c)$ with
  $r_i = e(K/L)(p-1-\nu'_i)$ and $c_i = \sum_{j=0}^{f-1} \nu'_{i-j}
  p^{j}$ for all $i$.   In the notation of
  Proposition~\ref{prop:generic-fibre-calculation} we have $J =
  \varnothing$ and $x_i = p-1-\nu'_i$ for all $i$.
  \end{enumerate}
\end{prop}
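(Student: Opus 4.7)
For part (1), the plan is to define $\M = \M(r, \underline{1}, c)$ exactly as in the statement and verify it belongs to $\cS$ with trivial generic fibre. Since $e = e'(p^f-1)$ and the hypothesis gives $p-1-\nu'_i \in [0, e']$, we have $r_i = (p-1-\nu'_i)(p^f-1) \in [0, e]$. The recursion $c_{i+1} \equiv p(c_i + r_i) \pmod{p^f-1}$ reduces to $c_{i+1} \equiv pc_i$ since $(p^f-1) \mid r_i$, and the explicit formula $c_i = \sum_{j} p^j \nu'_{i-j}$ satisfies this by periodicity. A direct computation using $\omega_j = \omega_0^{p^{f-j}}$ shows $\omega_i^{c_i} = \prod_j \omega_j^{\nu'_j} = \lambda'$, so $\M$ is a model of type $\lambda \oplus \lambda'$ with $J = \varnothing$ and $x_i = p-1-\nu'_i$ in the notation of Proposition~\ref{prop:generic-fibre-calculation}. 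Applying that proposition, the inertia part of the generic fibre is $\prod_i \omega_i^{\nu'_i + x_i} = \prod_i \omega_i^{p-1} = 1$ (since $\omega_0$ has order dividing $p^f-1$), and the unramified part is $\ur_{\Nm(\underline 1)^{-1}} = 1$. This settles existence and verifies the invariants claimed in (2).

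For the minimality claim of (2), Proposition~\ref{prop:minimal-exists} guarantees that $\cS$ has a minimum element, so it suffices to exhibit a nonzero map $\M \to \M'$ for every $\M' = \M(r', a', c') \in \cS$. By Lemma~\ref{lem:rank-one-maps}, this reduces to: (a) $\alpha_i \le \alpha'_i$ for all $i$; (b) $\alpha'_i - \alpha_i \equiv c_i - c'_i \pmod{p^f-1}$; and (c) $\Nm(\underline{a}') = 1$. Both (b) and (c) are automatic from Lemma~\ref{lem:tst-evaluation} applied to $\M$ and $\M'$, which both have trivial generic fibre.

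The core step is (a). I would parameterize $\M'$ via Proposition~\ref{prop:generic-fibre-calculation} by invariants $(J', x'_i)$, writing $r'_i = x'_i(p^f-1) + \eta'_i$ for the case-specific correction $\eta'_i \in \{0, \pm[p^i\delta], \pm(e(K/L)-[p^i\delta])\}$. Setting $\mu'_k = \nu_k$ for $k \in J'$ and $\mu'_k = \nu'_k$ otherwise, the hypothesis $\nu_k \le \nu'_k$ yields $\bar\mu_k := p-1-\mu'_k \ge \bar\nu_k := p-1-\nu'_k$. Triviality of the generic fibre of $\M'$, combined with Proposition~\ref{prop:generic-fibre-calculation}, forces the congruence $\sum_j p^{f-j} x'_{i+j} \equiv \sum_j p^{f-j} \bar\mu_{i+j} \pmod{p^f-1}$. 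One then combines this congruence with the allowability bounds of Definition~\ref{defn:allowable} and the explicit form of $\eta'_i$ to show that $\alpha'_i = \sum_j p^{f-j} r'_{i+j}/(p^f-1) \ge \sum_j p^{f-j} \bar\nu_{i+j} = \alpha_i$.

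The main obstacle is the combinatorial bookkeeping in this case analysis: one must check, for each of the five cases and as $J'$ varies, that any putative $\M'$ violating $\alpha'_i \ge \alpha_i$ would force some $x'_j$ out of the range $[0,e']$ or fail the allowability condition at a boundary of $J'$. A cleaner route is to argue first in the stratum $J' = \varnothing$, where $\eta'_i = 0$ and the congruence on $\sum p^{f-j} x'_{i+j}$ forces $x' = \bar\nu$ to be the unique minimal choice (any other valid $x'$ differs by a multiple of $p^f-1$ in the weighted sum), yielding equality with our $\M$; then reduce an arbitrary $J'$ to this case by tracking how the "carries" set $C$ and the shifts $[p^i\delta]$ contribute to the inequality $\bar\mu_k \ge \bar\nu_k$.
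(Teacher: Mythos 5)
Your treatment of part (1) and the identification of the invariants $(J,\{x_i\}) = (\varnothing, \{p-1-\nu'_i\})$ is correct and matches the paper's; the verification that $c_i = \sum_j p^j\nu'_{i-j}$ satisfies the recursion and yields $\sigma_i\circ\etabar^{c_i} = \lambda'$ is the right computation. Your reduction of part (2) to the single inequality $\alpha_i \le \alpha'_i$ (for all $i$, for all models $\M'$) via Lemma~\ref{lem:rank-one-maps} and Remark~\ref{rem:if-divisible-remark} is also correct.

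However, the core step (a) --- the inequality $\alpha_i \le \alpha'_i$ itself --- is not actually proved, and as you write it the plan has a genuine gap. Two issues. First, you aim to prove the \emph{uniform} inequality $\alpha_i \le \alpha'_i$ for all $i$ directly. The paper instead proves the weaker statement that any model $\M'$ with invariants $(J',x') \ne (\varnothing,\bar\nu)$ has $\alpha'_i > \alpha_i$ for \emph{some} $i$, and then combines this with the existence of a minimum element from Proposition~\ref{prop:minimal-exists}: the minimum $\M_-$ must map to your $\M$, so $\alpha_{-,i} \le \alpha_i$ for all $i$, which is incompatible with $\alpha_{-,i}>\alpha_i$ somewhere unless $\M_-\cong\M$. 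Proving the pointwise bound for one $i$ is strictly easier than what you set up. Second, your ``cleaner route'' is not correct as stated: in the stratum $J'=\varnothing$, the congruence $\sum_j p^{f-j}x'_{i+j} \equiv \sum_j p^{f-j}\bar\nu_{i+j} \pmod{p^f-1}$ together with $0\le x'_i \le e'$ does \emph{not} force $x' = \bar\nu$ uniquely, since the weighted sums can differ by a nonzero multiple of $p^f-1$ even within the allowable range; and the reduction of arbitrary $J'$ to $J'=\varnothing$ by ``tracking carries'' is not specified in a way that obviously terminates or is monotone.

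The device you are missing is the paper's ``carrying'' operation: if some $x'_i \ge p$, replace $x'_i$ by $x'_i - p$ and $x'_{i-1}$ by $x'_{i-1}+1$; this preserves the identity $\prod_i\omega_i^{x'_i} = \prod_{i\in J'}\omega_i^{p-1-\nu_i}\prod_{i\not\in J'}\omega_i^{p-1-\nu'_i}$, leaves every $\alpha'_j$ with $j\ne i$ unchanged, and strictly decreases $\alpha'_i$ by $p\cdot e(K/L)$. Crucially one does \emph{not} insist the intermediate $x'$ be allowable. After carrying to the range $x'_i\le p-1$, nontriviality of both sides forces $x'_i = p-1-\nu_i$ if $i\in J'$ and $x'_i = p-1-\nu'_i$ otherwise. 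If $J'=\varnothing$ this is exactly $\bar\nu$ and the (possibly larger) original $\alpha'_i$ dominate $\alpha_i$. If $J'\ne\varnothing$, one uses that $C=\varnothing$ (a consequence of $\nu_i\le\nu'_i$, which you observed but did not exploit) and the explicit formula $\alpha'_0 = \sum_i p^{f-i}x'_i + \sum_{i\not\in J'}p^{f-i}\delta_i$ from the proof of Proposition~\ref{prop:generic-fibre-calculation}; one then checks that $\alpha'_0 \le \alpha_0$ would force $\nu_i = \nu'_i$ for all $i$, contradicting $J'\ne\varnothing$. Your proposal correctly sets up the comparison $\bar\mu_k \ge \bar\nu_k$, which is a necessary ingredient, but without the carrying trick and the one-index strategy the ``combinatorial bookkeeping'' you defer is precisely where the proof lives.
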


\begin{proof}  
If the generic fibre of $\M$ is trivial, by
Proposition~\ref{prop:generic-fibre-calculation} we can write
\begin{equation}\label{eq:identity} \prod_{i=0}^{f-1} \omega_i^{x_i} = \prod_{i\in J}
\omega_i^{p-1-\nu_i} \prod_{i \not\in J} \omega_i^{p-1-\nu'_i}\end{equation} and
conversely if this identity holds for some choice of $J$ and allowable $x_i$'s
then taking $\u{a} = 1$ gives a model of type $\lambda \oplus
  \lambda'$ with trivial generic fibre.  If we take $J =
 \varnothing$, then the integers $x_i = p-1-\nu_i' \in [0,e']$
  are automatically allowable; this proves (1), and since $J =
  \varnothing$ we have $\sigma_i \circ \etabar^{c_i} = \lambda'$ for
  all $i$, which implies $c_i = \sum_{j=0}^{f-1} \nu'_{i-j}
  p^{j}$.    It remains to show
  that the Breuil module $\M'$ corresponding to this data is actually
  the minimal model.  

First suppose that there exists a choice of $J$ and $x_i$'s so that
both sides of \eqref{eq:identity} are trivial.
Since $\nu_i,\nu'_i \in [0,p-1]$, this implies that at least one of
$\nu_i,\nu'_i$ is $p-1$ for all $i$, or at least one of $\nu_i,\nu'_i$
is $0$ for all $i$.   Since $\nu_i\le \nu'_i$, the latter would imply $\nu_i = 0$
for all $i$; but this contradicts our convention that $\nu_i = p-1$
for all $i$ when $\lambda = 1$.  So the former must hold, and we have 
$\nu'_i = p-1$ for all $i$.  Then $J =  \varnothing$
and $x_i = p-1-\nu'_i = 0$ for all $i$ evidently gives a minimal
model.

Now suppose it is never the case that both sides of \eqref{eq:identity} are
trivial.  Fix $J$ and integers $x_i \ge 0$ so that
\eqref{eq:identity} is satisfied (with $J = \varnothing$ if $\lambda=\lambda'$), define integers $r_i$ by the
formulas in the statement of
Proposition~\ref{prop:generic-fibre-calculation}, and then define
$\alpha_i = \frac{1}{p^f-1}\sum_{j=0}^{f-1} p^{f-j} r_{i+j} $ as
usual.  (Any model of type $\lambda \oplus \lambda'$ with
trivial generic fibre yields such data, with the $x_i$'s allowable for
$J$; however, note that in the
argument that follows we do \emph{not} assume that the
$x_i$'s are allowable for $J$.)  To deduce that $\M'$ is the minimal
model, it suffices by (the dual of) Proposition~\ref{prop:max-model}
to show that unless $J =
\varnothing$ and $x_i = p-1-\nu'_i$ for all $i$, we must have $\alpha_i > \alpha'_i$ for some $i$,
where the $\alpha'_i =\sum_{j=0}^{f-1} p^{f-j} (p-1-\nu'_{i+j})$ are the corresponding constants for $\M'$.

First suppose that $x_i \ge p$ for some $i$.  Replacing $x_i$ with
$x_i - p$ and $x_{i-1}$ with $x_{i-1}+1$ leaves the truth of
\eqref{eq:identity} unchanged, leaves $\alpha_j$ unchanged for all $j
\neq i$, and replaces $\alpha_i$ with $\alpha_i - p e(K/L)$.
By iterating this ``carrying'' operation we can reduce to the case
where $x_i \le p-1$ for all $i$.   In that case, since both sides
of \eqref{eq:identity} are assumed to be nontrivial  we must
actually have
$$
x_i = 
\begin{cases} 
p-1-\nu_i & \text{if $i \in J$} \\
p-1-\nu'_i & \text{if $i \not\in J$}.
 \end{cases}$$
The claim in the case $J = \varnothing$ is now immediate, so suppose that $J
\neq \varnothing$, and indeed suppose without loss of generality that
$0 \in J$.  
Note
that since $\nu'_i \ge \nu_i$ for all $i$ the set $C$ is empty, and
the proof of
Proposition~\ref{prop:generic-fibre-calculation} shows that
$\alpha_0$ is equal to
$\sum_{i=0}^{f-1} p^{f-i} x_i  + \sum_{i \not\in J} p^{f-i}
\delta_i$.    An inequality $\alpha_0 \le \alpha'_0$, or equivalently
$$\sum_{i=0}^{f-1} p^{f-i} x_i + \sum_{i \not\in J} p^{f-i} \delta_i
\le \sum_{i=0}^{f-1} p^{f-i} (p-1-\nu'_i),$$
would imply that 
$\nu_i = \nu'_i$ for all $i \in J$, and $\delta_i = 0$ for all $i
\not\in J$.  But $\delta_i = 0$ implies $\nu_i = \nu'_i$; so in fact
we would have 
$\nu_i = \nu'_i$ for all $i$, contradicting that $\lambda \neq
\lambda'$ when $J \neq \varnothing$.  Therefore $\alpha_0 > \alpha'_0$.
\end{proof}

\begin{cor}
  \label{cor:dim-count}
Let $\tau$ be a type as in Proposition~\ref{prop:minimal-trivial}.  Write $\nu'_i = (p-1-e') +
  \mu_i$ for all $i$.  If $\chi |_{I_{G_L}} = \lambda\lambda'\epsilonbar$
  then $$\dim_{k_E} L(1,\chi,\tau) \le \delta + \sum_{i=0}^{f-1}
  \mu_i$$ where $\delta = 1$ if $\chi = 1$ and $\delta  =0 $
  otherwise.
\end{cor}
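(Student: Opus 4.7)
The plan is to leverage the minimal model identified in Proposition~\ref{prop:minimal-trivial}(2) and apply the dimension formula of Theorem~\ref{thm:extensions}. By Corollary~\ref{cor:is-vector-space} (and its proof) we know that if $L(1,\chi,\tau)$ is nonempty then $L(1,\chi,\tau) = L(\M_-, (\M_-)^\vee_\chi)$, where $\M_-$ is the minimal model of type $\tau$ with trivial generic fibre; if instead $L(1,\chi,\tau)$ is empty then the bound is vacuous. So I may assume $\M_-$ exists, and Proposition~\ref{prop:minimal-trivial}(2) furnishes it explicitly: $\M_- = \M(r,1,c)$ with $r_i = e(K/L)(p-1-\nu'_i)$.

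The next step is to read off the $\chi$-dual $\N := (\M_-)^\vee_\chi = \M(s,b,d)$ from Definition~\ref{defn:chi-dual}. Since $s_i = e - r_i$ and $e = e'(p^f-1) = e'\cdot e(K/L)$, substituting $\nu'_i = (p-1-e') + \mu_i$ gives
\[
s_i = e'(p^f-1) - (p^f-1)(p-1-\nu'_i) = \mu_i \cdot e(K/L).
\]
In particular $r_i + s_i = e$, so $\max(0, r_i+s_i-e) = 0$ and the interval $[0,s_i)$ of Theorem~\ref{thm:extensions} has length exactly $\mu_i \cdot e(K/L)$. Because this length is an integer multiple of $e(K/L)$, the number of $j \in [0,s_i)$ with $j \equiv r_i + c_i - d_i \pmod{e(K/L)}$ is exactly $\mu_i$, regardless of the residue class.

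Applying Theorem~\ref{thm:extensions} therefore yields
\[
\dim_{k_E} \Ext^1(\M_-, \N) = \delta + \sum_{i=0}^{f-1} \mu_i,
\]
where $\delta = 1$ iff there is a nonzero map $\M_- \to \N$. By Remark~\ref{rem:if-divisible-remark}, the existence of such a map requires $T^L_{\st,2}(\M_-) \simeq T^L_{\st,2}(\N)$, i.e.\ $1 \simeq \chi$, matching the $\delta$ of the corollary's statement. Finally, since $L(\M_-,\N)$ is by definition the image of $\Ext^1(\M_-,\N)$ under the functor $T^L_{\st,2}$, we get
\[
\dim_{k_E} L(1,\chi,\tau) = \dim_{k_E} L(\M_-,\N) \le \dim_{k_E} \Ext^1(\M_-,\N) = \delta + \sum_{i=0}^{f-1} \mu_i,
\]
which is the claimed inequality. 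No step of this argument looks delicate; the only substantive point is matching the $\delta$ in Theorem~\ref{thm:extensions} with the $\delta$ in the corollary, which follows immediately from Remark~\ref{rem:if-divisible-remark}.
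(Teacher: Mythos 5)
Your proposal is correct and follows the same route as the paper: take the minimal model $\M_-$ from Proposition~\ref{prop:minimal-trivial}(2), invoke (the proof of) Corollary~\ref{cor:is-vector-space} to identify $L(1,\chi,\tau)=L(\M_-,(\M_-)^\vee_\chi)$, compute $s_i=e(K/L)\mu_i$, and read off the bound from the dimension formula of Theorem~\ref{thm:extensions}. Your explicit reconciliation of the two $\delta$'s via Remark~\ref{rem:if-divisible-remark} is a welcome clarification but otherwise matches the paper's argument.
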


\begin{proof}
  Let $\M$ be the minimal model of type $\tau$ with trivial generic
  fibre, as described by
  Proposition~\ref{prop:minimal-trivial}(2).  
  By the proof of 
  Corollary~\ref{cor:is-vector-space} we have $L(1,\chi,\tau) =
  L(\M,\M^{\vee}_{\chi})$.  We compute an upper bound on the dimension of $L(\M,\M^{\vee}_{\chi})$
  using Theorem~\ref{thm:extensions}.  Since $J = \varnothing$
  we have $r_i =
  (p-1-\nu'_i)e(K/L)$ for all $i$, and 
 $$s_i = e - r_i = e(K/L)e' - r_i = e(K/L) \mu_i.$$  Thus the $i$th
 term in the dimension formula in Theorem~\ref{thm:extensions} is $\mu_i$.
\end{proof}

We will now use Proposition~\ref{prop:comparison-with-minimax}
to compare the spaces $L(1,\chi,\tau)$ as $\tau$ varies, at least in certain cases.

\begin{prop} \label{prop:coagulation}  Let $\tau \simeq \lambda \oplus
  \lambda'$ be a type as in Proposition~\ref{prop:minimal-trivial},
  and suppose further that $\nu_i + \nu'_i \ge p-1$ for all $i$, and $\chi
  \neq 1$.
The space $L(1,\chi,\tau)$ is the set of extension classes
of generic fibres of Breuil modules of the form
 $ \cP(0,1,0;e,b,d^{\dagger}; h)$, where $d^{\dagger}_i =
 \sum_{j=0}^{f-1} (\nu_{i-j} +
 \nu'_{i-j} -(p-1)) p^j$, 
each $h_i$ is a polynomial whose only nonzero terms have degree
$t(p^f - 1) - [\sum_{j=0}^{f-1} (\nu_{i-j} + \nu'_{i-j} - (p-1))p^j]$
with $p-1-\nu'_i < t \le e'$, and $\Nm(\u{b})^{-1}$ gives the
unramified part of $\chi$ as in Lemma~\ref{lem:tst-evaluation}.
\end{prop}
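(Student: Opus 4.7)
The plan is to combine Proposition~\ref{prop:minimal-trivial}, Theorem~\ref{thm:extensions}, and Proposition~\ref{prop:comparison-with-minimax} directly. By Proposition~\ref{prop:minimal-trivial}(2) and the proof of Corollary~\ref{cor:is-vector-space}, one has $L(1,\chi,\tau) = L(\M,\N)$ where $\M = \M(r,1,c)$ is the minimal model with trivial generic fibre (so $r_i = (p^f-1)(p-1-\nu'_i)$ and $c_i = \sum_j \nu'_{i-j}p^j$), and $\N = \M^\vee_\chi = \M(s,b,d)$ is its $\chi$-dual. From $r_i+s_i=e$ one reads off $s_i = (p^f-1)\mu_i$ where $\mu_i = \nu'_i + e' - (p-1)$; since $\sigma_i \circ \etabar^{c_i} = \lambda'$ for $\M$ and $\{\sigma_i\circ\etabar^{c_i},\sigma_i\circ\etabar^{d_i}\} = \{\lambda,\lambda'\}$ for $\N$, we have $d_i = \sum_j \nu_{i-j}p^j$, and $\Nm(\underline b)^{-1}$ is the unramified part of $\chi$ by Lemma~\ref{lem:tst-evaluation}.

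Next I would apply Theorem~\ref{thm:extensions} to describe $L(\M,\N)$: every class arises as the generic fibre of some $\cP(r,1,c;s,b,d;h)$ where the allowed degrees of $h_i$ are $D \equiv c_i - d_i \pmod{p^f-1}$ with $0 \le D < s_i = (p^f-1)\mu_i$ (there is no exceptional term in degree $r_0+\beta_0-\alpha_0$ because $\chi \neq 1$ rules out a nonzero map $\M \to \N$). So the allowed degrees are exactly the $\mu_i$ values $(c_i-d_i)+k(p^f-1)$ for $k = 0,1,\ldots,\mu_i-1$.

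Now apply Proposition~\ref{prop:comparison-with-minimax}: this $\cP$ has the same generic fibre as $\cP^\dag = \cP(0,1,c^\dag;e,b,d^\dag;u^{\underline\delta}h)$. The key computation is the identity $\alpha_i = (p^f-1)(p-\nu'_i) - c_i$, obtained by reindexing and using $p^f\equiv 1 \pmod{p^f-1}$, which immediately yields $c^\dag_i = c_i+\alpha_i \equiv 0 \pmod{p^f-1}$ (so we may take $c^\dag_i = 0$). A parallel calculation gives $\beta_i - ep/(p-1) = -\alpha_i$, hence $d^\dag_i \equiv d_i + c_i - (p^f-1) \equiv \sum_j(\nu_{i-j}+\nu'_{i-j}-(p-1))p^j \pmod{p^f-1}$; the assumption $\nu_i+\nu'_i \ge p-1$ makes the right-hand side a nonnegative integer which equals $[d^\dag_i]$ (handling both this and the degenerate case when it equals $p^f-1$ uniformly). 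Substituting into the definition gives the closed form $\delta_i = (p^f-1)(p+1-\nu'_i) - 2c_i$.

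Finally I would verify the degree statement. The nonzero terms of $u^{\delta_i}h_i$ have degrees $\delta_i + (c_i-d_i) + k(p^f-1)$ for $k = 0,\ldots,\mu_i-1$; using $c_i+d_i = [d^\dag_i] + (p^f-1)$ one finds
\[
\delta_i + (c_i-d_i) + k(p^f-1) = (p-\nu'_i+k)(p^f-1) - [d^\dag_i],
\]
and as $k$ runs from $0$ to $\mu_i-1 = e'-(p-\nu'_i)$ this is precisely $t(p^f-1)-[d^\dag_i]$ for $t \in \{p-\nu'_i,\ldots,e'\}$, i.e.\ for $p-1-\nu'_i < t \le e'$. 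The main obstacle is the bookkeeping in these arithmetic identities --- in particular the clean formula for $\alpha_i$ and the combined identity $\delta_i + c_i - d_i + [d^\dag_i] = (p-\nu'_i)(p^f-1)$ --- but once these are in hand the proposition follows.
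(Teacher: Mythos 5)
Your approach is essentially the same as the paper's: identify the minimal model $\M$ from Proposition~\ref{prop:minimal-trivial}(2), apply Theorem~\ref{thm:extensions} to describe $\Ext^1(\M,\M^\vee_\chi)$, and then push through Proposition~\ref{prop:comparison-with-minimax}. Your closed-form identities $\alpha_i = (p^f-1)(p-\nu'_i)-c_i$ and $\beta_i = ep/(p-1)-\alpha_i$ are correct (the latter is immediate from $r_i + s_i = e$), and they reproduce the paper's expression for $\delta_i$ — your $(p^f-1)(p+1-\nu'_i)-2c_i$ equals the paper's $(p^f-1)(p-1-\nu'_i) + 2\sum_j(p-1-\nu'_{i-j})p^j$ after substituting $c_i = \sum_j \nu'_{i-j}p^j$. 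The degree bookkeeping in the non-degenerate case checks out.

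However, the parenthetical claim that your argument handles ``the degenerate case when it equals $p^f-1$ uniformly'' is false, and this is a genuine gap. When $\nu_i = \nu'_i = p-1$ for all $i$ (so $\lambda = \lambda' = 1$, which is not excluded by the hypotheses since $\chi|_{I_L} = \epsilonbar$ can hold with $\chi\neq 1$), the sum $\sum_j(\nu_{i-j}+\nu'_{i-j}-(p-1))p^j = p^f-1$ while its reduced representative $[d^\dagger_i] = 0$. Your key identity $c_i + d_i = [d^\dagger_i] + (p^f-1)$ then fails ($c_i + d_i = 2(p^f-1)$), and consequently so does $\delta_i + (c_i - d_i) + k(p^f-1) = (p-\nu'_i+k)(p^f-1) - [d^\dagger_i]$: with $\delta_i = c_i - d_i = 0$, $p-\nu'_i = 1$, $[d^\dagger_i]=0$, the two sides differ by $p^f-1$. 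Concretely, the degrees coming out of Theorem~\ref{thm:extensions} and Proposition~\ref{prop:comparison-with-minimax} are $0,(p^f-1),\ldots,(e'-1)(p^f-1)$, whereas the Proposition asserts $p^f-1,\ldots,e'(p^f-1) = e$. One must exhibit a further change of basis (of the sort $m \leadsto m + \u{a}^{-1}\phi(\u{t})n$ with $\u{t}$ a unit, as in the proof of Theorem~\ref{thm:extensions}) that trades the degree-$0$ term of $h_i$ for a degree-$e$ term; the paper does exactly this and flags it again in Remark~\ref{rem:small-adjustment}. Without that extra step your argument does not establish the stated range $p-1-\nu'_i < t \le e'$ in the degenerate case.
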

\begin{proof}  Let $\M$ be the minimal model of
  Proposition~\ref{prop:minimal-trivial}.
Then $\M^{\vee}_{\chi} = \M(s,b,d)$ with $s_i = e(L/K) \mu_i$,
  $d_i = \sum_{j=0}^{f-1} \nu_{i-j} p^j$, and $\u{b}$ as in the
  statement of the proposition.
By Theorem~\ref{thm:extensions}, classes in $\Ext^1(\M,\N)$ have $h_i$ with terms of degree
$m(p^f-1) + \sum_{j=0}^{f-1} (\nu'_{i-j} - \nu_{i-j}) p^j$ with $0 \le
m < \mu_i$ (note that the hypotheses of
Proposition~\ref{prop:minimal-trivial} ensure that $\nu'_{i-j}
-\nu_{i-j}$ are non-negative and not all zero).

Now compute that the $\u{\delta}$ of 
Proposition~\ref{prop:comparison-with-minimax} has
$$\delta_i = ep/(p-1) - \beta_i + \alpha_i - r_i =
(p^f - 1)(p-1-\nu'_i) + 2\sum_{j=0}^{f-1} (p-1-\nu'_{i-j})p^j,$$
and so the terms of the Breuil module $\cP^{\dagger}$ of 
Proposition~\ref{prop:comparison-with-minimax}  have degree
$$m(p^f-1) +  \sum_{j=0}^{f-1} (\nu'_{i-j} - \nu_{i-j}) p^j + \delta_i
 = t(p^f-1) -  \sum_{j=0}^{f-1} (\nu_{i-j} +
 \nu'_{i-j} - (p-1))p^j$$
 where $t = p-1-\nu'_i +m+1$.  When $\nu'_i = \nu_i = p-1$ for all $i$
 (i.e.~in the unique case where  $[\sum_{j=0}^{f-1} (\nu_{i-j} +
 \nu'_{i-j} - (p-1))p^j]$ and $\sum_{j=0}^{f-1} (\nu_{i-j} +
 \nu'_{i-j} - (p-1))p^j$ are different) note that there is a change of
 basis parameter $\u{t}$ as in the proof of
 Theorem~\ref{thm:extensions}  with
$\u{t} \in (k \otimes k_E)^{\times}$ that exchanges the terms of degree $0$ in
 the $h_i$'s for
 terms of degree~$e'$.   One easily checks that $c^{\dagger}$ and $d^{\dagger}$
 are as claimed, completing the proof.
\end{proof}

\begin{remark}
  \label{rem:small-adjustment}
The Breuil modules $\cP$ of
Proposition~\ref{prop:coagulation} are usually in the canonical form
of Theorem~\ref{thm:extensions}; the exception is that if
$\lambda\lambda' = 1$ then we have terms of degree $e'$ in~$h_i$ instead of
terms of degree $0$.  However, as we have seen in the preceding argument, these are equivalent by a change of
basis parameter $\u{t}$ as in the proof of
Theorem~\ref{thm:extensions}.   
\end{remark}

\begin{cor} \label{cor:intersections}
For any $\tau,\dot{\tau}$ as in Proposition~\ref{prop:coagulation} and
$\chi\neq 1$ with $\chi |_{I_{G_L}} = \lambda\lambda'\epsilonbar$, we have
\begin{enumerate}
\item $\dim_{k_E} L(1,\chi,\tau) = \sum_{i=0}^{f-1} \mu_i$,
\item $L(1,\chi,\tau) \cap L(1,\chi,\dot{\tau}) = L(1,\chi,\ddot{\tau})$
where the type $\ddot{\tau}$ has $\ddot{\nu}_i =
\max(\nu_i,\dot{\nu}_i)$ and $\ddot{\nu}'_i =
\min(\nu'_i,\dot{\nu}'_i)$ (with the inferrable notation).
\end{enumerate}
\end{cor}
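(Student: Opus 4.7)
The plan is to establish (1) by matching an upper bound with a matching dimension count, and then derive (2) by an explicit comparison of the parameter spaces from Proposition~\ref{prop:coagulation}.

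For (1), the upper bound $\dim_{k_E} L(1,\chi,\tau) \le \sum_i \mu_i$ will follow directly from Corollary~\ref{cor:dim-count}, where $\delta = 0$ because $\chi \neq 1$ precludes nonzero maps between the generic fibres of $\M$ (trivial) and $\M^{\vee}_{\chi}$ (equal to $\chi$). For the matching lower bound, I will use Proposition~\ref{prop:coagulation}, which exhibits $L(1,\chi,\tau)$ as the image under the generic-fibre functor of an affine space of Breuil modules $\cP^{\dagger} = \cP(0,1,0;e,\u{b},d^{\dagger};h)$, parametrized by $\sum_i \mu_i$ polynomial coefficients (one per allowed degree $t \in (p-1-\nu'_i, e']$ in each $h_i$). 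A direct count using Theorem~\ref{thm:extensions}, with $r_i + s_i = e$ and $s_i = e(K/L)\mu_i$, recovers the same total $\sum_i \mu_i$ for $\dim \Ext^1_{\BrMod}(\M, \M^{\vee}_{\chi})$. Matching the source dimension against the upper bound will then force the parametrization to be injective, so $\dim L(1,\chi,\tau) = \sum_i \mu_i$.

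For (2), I will first verify that $\ddot\tau$ itself satisfies the hypotheses of Proposition~\ref{prop:coagulation}: the bound $\ddot\nu'_i \in [p-1-e',p-1]$ is immediate from the corresponding bounds on $\nu'_i$ and $\dot\nu'_i$, and $\ddot\nu_i + \ddot\nu'_i \ge p-1$ follows by a short case analysis on the orderings of $\nu_i$ vs.\ $\dot\nu_i$ and $\nu'_i$ vs.\ $\dot\nu'_i$, applied to the inequalities $\nu_i + \nu'_i \ge p-1$ and $\dot\nu_i + \dot\nu'_i \ge p-1$. The key structural observation is that, with $\chi$ fixed, the data $(0,1,0;e,\u{b},d^{\dagger})$ in Proposition~\ref{prop:coagulation} is common to all three types: $\Nm(\u{b})^{-1}$ records only the unramified part of $\chi$, and $[d^{\dagger}_i] \bmod (p^f-1)$ depends only on $\chi|_{I_L} = \lambda\lambda'\epsilonbar$. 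The only dependence on $\tau$ is through the range $t \in (p-1-\nu'_i,e']$ of allowed degrees for each $h_i$, and these satisfy
\[
(p-1-\ddot\nu'_i, e'] \;=\; (p-1-\nu'_i, e'] \,\cap\, (p-1-\dot\nu'_i, e'].
\]
Hence the parameter space for $\ddot\tau$ is literally the intersection of those for $\tau$ and $\dot\tau$, and combining this with the injectivity of the parametrization from (1) will yield $L(1,\chi,\ddot\tau) = L(1,\chi,\tau) \cap L(1,\chi,\dot\tau)$.

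The main obstacle is the injectivity of the parametrization in (1); this is the one place where one must carefully identify the $\cP^{\dagger}$ of Proposition~\ref{prop:coagulation} with canonical-form extensions in the sense of Theorem~\ref{thm:extensions} (modulo the degree-$0$ versus degree-$e'$ exchange noted in Remark~\ref{rem:small-adjustment}), so that the matching of source and upper-bound dimensions genuinely forces the map to be one-to-one. Everything else reduces to a direct comparison of the explicit parameter spaces from Proposition~\ref{prop:coagulation}.
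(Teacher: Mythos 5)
Your structural outline matches the paper's proof closely: both arguments rest on (a) realizing every $L(1,\chi,\tau)$ for $\tau$ as in Proposition~\ref{prop:coagulation} inside a single ambient space $\Ext^1(\M_0,\N_0)$ with $\M_0 = \M(0,1,0)$ and $\N_0 = \M(e,b,d^\dagger)$ via the canonical-form parametrization of Proposition~\ref{prop:coagulation}/Theorem~\ref{thm:extensions}, and (b) observing that the subspace corresponding to $\tau$ is cut out by the interval condition $t \in (p-1-\nu'_i,e']$, so the intersection of intervals computes the intersection of subspaces. Your verification that $\ddot\tau$ satisfies the hypotheses of Proposition~\ref{prop:coagulation} and the interval identity $(p-1-\ddot\nu'_i,e'] = (p-1-\nu'_i,e']\cap(p-1-\dot\nu'_i,e']$ are both correct, modulo spelling out that $\nu_i+\nu'_i = \dot\nu_i+\dot\nu'_i$ for all $i$ (which follows because both tuples are digit expansions in $[0,p-1]$ of the same character $\lambda\lambda'\epsilonbar^{-?}$ determined by $\chi|_{I_L}$), which also explains why $d^\dagger$ and hence $\N_0$ is common to all three types.

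However there is a genuine gap in your argument for (1), and hence in the deduction of (2) from it: you need the map $\Ext^1(\M_0,\N_0) \to \Ext^1_{k_E[G_L]}(1,\chi)$ to be injective, but what you offer in place of a proof is circular. You write that ``matching the source dimension against the upper bound will then force the parametrization to be injective,'' but the upper bound of Corollary~\ref{cor:dim-count} is obtained precisely by noting that $L(1,\chi,\tau)$ is the image of the $\sum_i\mu_i$-dimensional source, so matching those two numbers is vacuous and provides no lower bound on the image's dimension. Even if you identify the $\cP^\dagger$ of Proposition~\ref{prop:coagulation} with canonical-form extension classes in the sense of Theorem~\ref{thm:extensions} (so that the parametrization faithfully lands in $\Ext^1(\M_0,\N_0)$), you still need to rule out collapse under the generic-fibre functor $T^L_{\st,2}$, i.e.\ injectivity of $\Ext^1(\M_0,\N_0) \to H^1(G_L,\chi)$. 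The paper obtains this by combining: (i) the dimension count $\dim_{k_E}\Ext^1(\M_0,\N_0) = e'f$ from Theorem~\ref{thm:extensions} (using $\delta=0$ since $\chi\neq1$), with (ii) the known surjectivity of this map onto all of $H^1(G_L,\chi)$ when $\chi|_{I_L}\neq\epsilonbar|_{I_L}$ (also $e'f$-dimensional), and onto the $e'f$-dimensional subspace of peu ramifi\'ee classes when $\chi=\epsilonbar$; a surjection between equidimensional spaces is injective. That surjectivity is an external input (essentially Raynaud/scheme-theoretic closure plus \cite[Prop.~8.2]{Edixhoven} in the cyclotomic case) which your proposal neither cites nor replaces with an alternative argument, so the injectivity claim does not follow from what you have written.
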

\begin{proof}
Let $\M_0 = \M(0,1,0)$ and $\N_0 = \M(e,b,d^{\dagger})$, with $b$ and
$d^{\dagger}$ as in the statement of Proposition~\ref{prop:coagulation}.
The map
$\Ext^1(\M_0,\N_0) \to \Ext^1_{k_E[G_L]}(1,\chi)$ is injective;
for instance, this follows from the dimension calculation in Theorem~\ref{thm:extensions}
together with the fact that the map is surjective except in the case of cyclotomic
$\chi$ when the image is the set  of peu ramifi\'ees classes.  Now
the result follows from Proposition~\ref{prop:coagulation} and
Corollary~\ref{cor:dim-count}, together with
Remark~\ref{rem:small-adjustment} in the case where  $\chi|_{I_{G_L}} =\epsilonbar|_{I_{G_L}}$. 
\end{proof}

\section{Weights and types}
\label{sec:weights-types}

\subsection{Serre weights}
\label{subsec:Serre weights}
We maintain the notation from the preceding section.  In particular, $L$ is a finite
extension of $\Q_p$ of absolute ramification degree $e'$, $K/L$ is totally ramified
of degree $p^{f'} - 1$, so that $K = L(\pi)$, $f=f'$, and $e(K/L) = p^f-1$.
We continue to assume
that the residue field is $k$ of degree $f$ over $\F_p$, and that $\sigma_i: k \into k_E$
are embeddings satisfying $\sigma_i = \sigma_{i+1}^p$ for $i = 0,\ldots,f-1$, taking
indices modulo $f$. 

Let $\rhobar: G_L \to \GL_2(k_E)$ be a reducible representation, so that
$$\rhobar \simeq  \left(\begin{array}{cc} \chi_2 & * \\ 0 & \chi_1 \end{array}\right)$$
for some characters $\chi_1,\chi_2 : G_L \to k_E^\times$.   In particular if
$\rhobar$ is decomposable, then we choose an ordering of the characters.
The ordered pair of characters $(\chi_1,\chi_2)$ will be fixed throughout
the section.

Recall that a {\em Serre weight} in our context is an isomorphism class
of absolutely irreducible representations of $\GL_2(k)$ in characteristic $p$. 
These are all defined over $k_E$ and have the form
$$\mu_{m,n} := \bigotimes_{i=0}^{f-1} 
  \left(\det{}^{m_i} \otimes \Sym^{n_i} k^2 \right) \otimes_{k,\sigma_i} k_E$$
where $m = (m_0,\ldots,m_{f-1})$ and $n = (n_0,\ldots,n_{f-1})$ are $f$-tuples
of integers satisfying $0 \le n_i \le p-1$ for all $i$.  The representations
$\mu_{m,n}$ and $\mu_{m',n'}$ are isomorphic if and only if $n = n'$
and $\sum_{i=0}^{f-1} m_{i}p^{f-i} \equiv \sum_{i=0}^{f-1} m'_{i}p^{f-i}
\pmod{p^f - 1}$.
  
A set of predicted Serre weights for $\rhobar$ is defined by
Barnet-Lamb, Gee and Geraghty in~\cite[Def.~4.1.14]{blggU2} (building
on~\cite{bdj,ScheinRamified,GeePrescribed}, and following \cite{EGHS}).  In order to give
the definition, we use the notion of a Hodge--Tate module.

\begin{defn}  \label{defn:HTtype}
A {\em Hodge--Tate module of rank $d$ (for $L$ over $E$)}
is an isomorphism class of filtered free $(L\otimes_{\Q_p}E)$-modules
of rank $d$, i.e.~of objects $(V,\Fil^\bullet)$, where 
$V$ is a free $(L\otimes_{\Q_p}E)$-module of rank $d$ and for $i \in \Z$, 
$\Fil^iV$ is a (not necessarily free) $(L\otimes_{\Q_p}E)$-submodule
such that $\Fil^j V \subseteq \Fil^iV$ if $i\le j$, $\Fil^i V = V$ for $i \ll 0$
and $\Fil^iV = 0$ for $i \gg 0$.
\end{defn}
Recall that we are assuming that $E$ contains all the embeddings
of $L$ in $\Qpbar$, so to give a Hodge--Tate module of rank $d$
is equivalent to giving, for each $\sigma: L \into \Qpbar$,
a $d$-tuple of integers $(w_{\sigma,1},\ldots,w_{\sigma,d})$
with $w_{\sigma,1} \le w_{\sigma,2} \le \cdots \le w_{\sigma,d}$.
For consistency with our conventions we normalize this
correspondence so that $(V,\Fil^\bullet)$ corresponds to the
$d$-tuples $(w_{\sigma,1},\ldots,w_{\sigma,d})$ defined by
$$-w_{\sigma,r} = \max\{\,w\,:\,  r \le \dim_E
\Fil^w(V\otimes_{L\otimes_{\Q_p} E } E) \,\},$$
where the tensor product is relative to the projection $L\otimes_{\Q_p} E \to E$
defined by $x \otimes y \mapsto \sigma(x)y$. 

\begin{defn}
  \label{defn:HTwts}
 We refer to the $d$-tuple
$(w_{\sigma,1}, w_{\sigma,2}, \ldots, w_{\sigma,d})$ as the
{\em $\sigma$-labelled Hodge--Tate weights} 
of $(V,\Fil^\bullet)$.
We say that $(V,\Fil^\bullet)$ is a {\em lift} of the Serre weight
$\mu_{m,n}$ if $d=2$ and for each $i=0,\ldots,f-1$ there is an embedding
$\tilde{\sigma}_i:L \into E$ lifting $\sigma_i$ such that:
\begin{itemize}
\item $(V,\Fil^\bullet)$ has $\tilde{\sigma}_i$-labelled Hodge--Tate weights $(m_i,m_i+n_i+1)$;
\item for each $\sigma \neq \tilde{\sigma}_i$ lifting $\sigma_i$,
$(V,\Fil^\bullet)$ has $\sigma$-labelled Hodge--Tate weights $(0,1)$.
\end{itemize}
Recall that if $\rho: G_L \to \GL_d(E)$ is crystalline, then $D_\dR(\rho)$ has the
structure of a filtered free $(L\otimes_{\Q_p}E)$-module of rank $d$ as in
Definition~\ref{defn:HTtype}.  We then define the Hodge--Tate module and
$\sigma$-labelled Hodge--Tate weights of $\rho$ to be those 
of $D_\dR(\rho)$.
  \end{defn}

\begin{defn}  \label{defn:predicted_weight}
We say that $\mu$ is a {\em predicted Serre weight} for $\rhobar$ if,
enlarging $E$ if necessary, 
$\rhobar$ has a reducible crystalline lift $\rho$
whose Hodge--Tate type is a lift of $\mu$.  We then define $W_{\expl}(\rhobar)$ to
be the set of predicted Serre weights for $\rhobar$.
\end{defn}

It is immediate from the definition that $W_{\expl}(\rhobar)\subset
W_{\expl}(\rhobar^\ssimp)$; moreover it follows from the description
of reductions of crystalline characters that
$W_{\expl}(\rhobar^\ssimp)$ is precisely the set of Serre weights
for $\rhobar^\ssimp$ predicted by Schein in \cite{ScheinRamified}
 (see \cite[Lemma~4.1.22]{blggU2}).
Recall that this is the set of $\mu_{m,n}$ such that 
\begin{equation}\label{eq:Schein_weights}
\begin{array}{rcl}
\chi_2|_{I_{G_L}} &=&
\prod_{i\in J}\omega_i^{m_i + n_i+e'-d_i}\prod_{i\not\in J}\omega_i^{m_i + e' - d_i}\\
\quad \text{and} \quad \chi_1|_{I_{G_L}} &= &
\prod_{i\in J}\omega_i^{m_i+d_i}\prod_{i\not\in J}\omega_i^{m_i + n_i + d_i}
\end{array}
\end{equation}
for some $J \subseteq \{0,\ldots,f-1\}$ and integers $d_i$ for $i =0,\ldots,f-1$
satisfying $0 \le d_i \le e'-1$ if $i\in J$ and $1 \le d_i \le e'$ if $i\not\in J$.
Thus $W_{\expl}(\rhobar^\ssimp)$ is indeed ``explicit,'' as the notation
is presumably meant to indicate; however $W_{\expl}(\rhobar)$ is less
so since it is defined in terms of reductions of extensions of crystalline
characters.

\subsection{A partition by types}
\label{subsec:partition}
We fix $\barrho$ as in \S\ref{subsec:Serre weights} and let
$W' = W_{\expl}(\rhobar^\ssimp)$.
The aim of this section is to define a partition of $W'$
under the following hypothesis on $\rhobar$:
\begin{defn}
  \label{defn:generic}
We say that $\rhobar$ is
{\em generic} if $\chi_1^{-1}\chi_2|_{I_{G_L}} = \prod_{i=1}^f \omega_i^{b_i+e'}$
for some integers $b_i$ satisfying
$$e' \le b_i + e' \le p- 1 - e'.$$
\end{defn}

We assume for the remainder of the paper that $\rhobar$ is generic, so that we have integers
$b_i$ as above.\footnote{It appears to us that it should be possible
  to replace this genericity hypothesis with a somewhat weaker
  hypothesis and still prove the main results of this
paper (using the methods of this paper). Indeed no such hypothesis
was needed in the totally ramified case \cite{GLS11}. On the other hand the discussion in
Section~\ref{sec:counterexample} below shows that with these methods  
one cannot expect to remove the genericity hypothesis entirely even for $L = \Q_{p^2}$, 
and so we have to some extent favored cleaner combinatorics over optimizing the
genericity hypothesis.}  Note in particular that this implies that $e' \le
(p-1)/2$.  We also write $\chi_1|_{I_{G_L}}= \prod_{i=1}^f \omega_i^{c_i}$
for some integers $c_i$.

Suppose that $\mu_{m,n} \in W'$, with $J$ and $d = (d_0,\ldots,d_{f-1})$ as 
in (\ref{eq:Schein_weights}).
Then $n$ satisfies the congruence:
$$\sum_{i=0}^{f-1} (b_{i} + 2d_{i})p^{f-i} \equiv
 \sum_{i\in J} n_{i}p^{f-i} - \sum_{i\not\in J} n_{i}p^{f-i} \pmod{p^f  -1}.$$
 One easily sees that given $J$ and $d$, there is a unique such $n$ unless
 $$\sum_{i=0}^{f-1} (b_{i} + 2d_{i})p^{f-i} \equiv \sum_{i \in J} (p-1)p^{f-i}\pmod{p^f  -1}.$$
 The genericity hypothesis implies that $0 \le b_i + 2d_i < p - 1$ if $i\in J$,
 and $0 < b_i + 2d_i \le p - 1$ if $i \not\in J$, so we see that $n$ is unique
 unless either $b = d = (0,\ldots,0)$ and $J=\{0,\ldots,f-1\}$, or 
 $b = (p-1-2e',\ldots,p-1-2e')$, $d = (e',\ldots,e')$ and $J =
 \varnothing$ (and so in particular unless $\chi_1^{-1} \chi_2
|_{I_{G_L}} = \epsilonbar|_{I_{G_L}}^{\pm 1}$).
 It follows that aside from these two exceptional cases, there is a
 unique $\mu_{m,n}$ for each pair $(J,d)$, and one checks that it is given by:
 \begin{equation}\label{eq:explicit-weights}\begin{array}{lll}
 m_i = c_i + p -  1 - d_i,& n_i = b_i + 2d_i,&\mbox{if $i\in J$ and $i+1\in J$;}\\
 m_i = c_i + p -  1 - d_i,& n_i = b_i + 2d_i + 1,&\mbox{if $i\in J$ and $i+1\not\in J$;}\\
 m_i = c_i + b_i + d_i - 1,& n_i = p - b_i - 2d_i,&\mbox{if $i\not\in J$ and $i+1\in J$;}\\
 m_i = c_i + b_i + d_i ,& n_i = p - 1 - b_i - 2d_i,&\mbox{if $i\not\in J$ and $i+1\not\in J$.}
  \end{array}\end{equation}
We let $\mu(J,d)$ denote the weight $\mu_{m,n}$ with $m,n$ defined by
(\ref{eq:explicit-weights}).   In the two exceptional cases, we obtain in addition
to $\mu(J,d)$ the weight $\mu'(J,d)$ defined as follows:
if $b =d = (0,\ldots,0)$ and $J=\{0,\ldots,f-1\}$, then $\mu'(J,d)=
\mu_{m,n}$ where $m_i = c_i$ and $n_i = p -1$ for all $i$,
and if $b=(p-1-2e',\ldots,p-1-2e')$, $d=(e',\ldots,e')$ and $J=\varnothing$,
then $\mu'(J,d)= \mu_{m,n}$ where
$m_i = c_i - e'$ and $n_i = p - 1$ for all $i$.

We let $W$ denote the subset of $W'$ consisting of the $\mu(J,d)$.
Note also that for $(m,n)$ as in (\ref{eq:explicit-weights}), we always
have $n_i < p-1$ for all $i$.  It follows that the additional weights
$\mu'(J,d)$ (when they occur) are not in $W$.  Note also that
both additional weights arise if $b=(0,\ldots,0)$ and $e' = (p-1)/2$, but
comparing values of $m$ shows they are distinct from each other.
Moreover the following lemma shows that the weights $\mu(J,d)$ are distinct.
\begin{lemma} \label{lem:distinct-weights} Suppose $J,J' \subseteq S$  and
that $d = (d_0,\ldots,d_{f-1})$
and $d' = (d_0',\ldots,d_{f-1}')$ are $f$-tuples of integers satisfying
$0 \le d_i \le e' - 1$ if $i \in J$, $1 \le d_i \le e'$ if $i\not\in J$, 
$0 \le d'_i \le e' -1$ if $i\in J'$ and $1 \le d'_i \le e'$ if $i\not\in J'$.
If $\mu(J,d)$ is isomorphic to $\mu(J',d')$, then $J = J'$ and $d=d'$.
\end{lemma}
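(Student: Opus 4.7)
My plan is to use the standard fact recalled in Section~\ref{subsec:Serre weights} that $\mu_{m,n} \simeq \mu_{m',n'}$ if and only if $n = n'$ and $\sum_{i=0}^{f-1} m_i p^{f-i} \equiv \sum_{i=0}^{f-1} m'_i p^{f-i} \pmod{p^f - 1}$. Accordingly I will recover $J$ (and $J'$) from the equality $n = n'$ alone, and then use the condition on $m$ modulo $p^f - 1$ to force $d = d'$.

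The first step is a parity argument. Inspecting the four cases of (\ref{eq:explicit-weights}), one sees that $n_i - b_i$ is even when $i$ and $i+1$ are both in $J$ or both outside $J$, and odd when exactly one of them lies in $J$ (here we use that $p$ is odd). Since the $b_i$ are intrinsic to $\rhobar$ and $n_i = n'_i$ by assumption, this forces the symmetric difference $J \triangle J'$ to contain $i$ if and only if it contains $i+1$, for every $i$ modulo $f$. Hence either $J = J'$ or $J' = J^c$.

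The heart of the argument is ruling out the case $J' = J^c$. In this case, equating the formulas for $n_i$ and $n'_i$ across the four subcases of (\ref{eq:explicit-weights}) yields the single uniform relation $d_i + d'_i = (p-1)/2 - b_i$. Substituting this into the formulas for $m_i$ and $m'_i$ shows that each $m_i - m'_i$ lies in $\{\pm(p-1)/2, \pm(p+1)/2\}$, and setting $\epsilon_i = 1$ if $i \in J$ and $\epsilon_i = 0$ otherwise, the four values may be uniformly written as
\[ m_i - m'_i = (2\epsilon_i - 1)\tfrac{p-1}{2} + (\epsilon_i - \epsilon_{i+1}). \]
Summing against $p^{f-i}$, the telescoping piece contributes $(1 - p)\sum_i \epsilon_i p^{f-i}$, which exactly cancels the $\epsilon$-dependent part of the first summand; one is left with $-p(p^f - 1)/2 \equiv (p^f - 1)/2 \pmod{p^f - 1}$, which is nonzero. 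This contradicts $\mu(J,d) \simeq \mu(J', d')$, so $J = J'$.

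Once $J = J'$ is known, the formula for $n_i$ in (\ref{eq:explicit-weights}) depends only on $b_i$, $d_i$ and the local shape of $J$ at $i, i+1$, all of which now agree on both sides; the equality $n_i = n'_i$ then immediately yields $d_i = d'_i$ case by case. The main obstacle is the exact cancellation in the $J' = J^c$ case: the fact that the $\epsilon$-dependent terms vanish and the sum lands precisely on $(p^f-1)/2$, the unique nontrivial element of order $2$ in $\Z/(p^f-1)\Z$, is what makes the parity approach succeed and is the only nontrivial computation in the proof.
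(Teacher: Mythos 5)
Your proof is correct, and it takes a genuinely different route from the paper's.

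The paper first twists so that $c_i=0$, observes that each $m_i$ then lies in $[0,p-1]$ with some $m_i>0$, and concludes that the congruence $\sum m_ip^{f-i}\equiv\sum m'_ip^{f-i}\pmod{p^f-1}$ forces an actual equality $m=m'$ of $f$-tuples (uniqueness of base-$p$ digits). It then rules out $J\neq J'$ in one stroke via the genericity inequality: if $i\in J\setminus J'$ then $m_i=p-1-d_i\ge p-e'>b_i+d'_i\ge m'_i$, contradicting $m=m'$; and once $J=J'$ and $m=m'$ the equality $d=d'$ is immediate. You instead extract $J$ from $n$ alone via parity of $n_i-b_i$, which reduces at once to the dichotomy $J=J'$ or $J'=J^c$; you eliminate $J'=J^c$ by an explicit calculation showing $\sum(m_i-m'_i)p^{f-i}\equiv\frac{p^f-1}{2}\pmod{p^f-1}$ (the telescoping cancellation and the resulting half-period value are verified correctly); and then $d=d'$ falls out of $n=n'$ once $J=J'$. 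A noteworthy structural difference is that your argument never invokes the genericity bound $b_i\le p-1-2e'$ (only $p$ odd), whereas the paper's proof leans on it at the crucial step, so your version would survive a weakening of the genericity hypothesis. The price is that your proof is longer and more computational; the paper's is shorter because after twisting it can compare $m$ and $m'$ as honest integer tuples rather than only modulo $p^f-1$.
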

\begin{proof}    Write $\mu(J,d) = \mu_{m,n}$ and
$\mu(J',d') = \mu_{m',n'}$ with $(m,n)$ and $(m',n')$ as in
(\ref{eq:explicit-weights}).   Twisting by $\chi_1^{-1}$,
we may suppose that $c_i = 0$ for all $i$.   Then $0 \le m_i \le p - 1$
for all $i$, and $m_i > 0$ for some $i$, so that
$0 < \sum_{i=1}^{f} m_{i}p^{f-i} \le p^f - 1$.  Since the same is true
for $m'$, we must have $m_i = m_i'$ for all $i$.  We claim that $J = J'$.
Indeed if not, then without loss of generality there is some $i \in J$
such that $i \not\in J'$, but then
$$m_i = p - 1 - d_i \ge p - e' > b_i + d_i \ge m_i',$$
giving a contradiction.  Since $J = J'$ and $m = m'$, it follows immediately
that $d = d'$.
\end{proof}

We will now define partitions of $W$ and $W'$ into
subsets indexed by $A$, where 
$A$ is the set of $f$-tuples $a = (a_0,a_1,\ldots,a_{f-1})$
with $0 \le a_i \le e'$ for all $i$.    For $a \in A$, we let $\tau_a$ denote the (at most) tamely ramified principal series inertial type 
$$\tau_a :=   \prod_{i=0}^{f-1} \om_i^{c_i+b_i+a_i}  \oplus  \prod_{i=0}^{f-1} \om_i^{c_i-a_i},$$
where $\om_i$ denotes the Teichm\"uller lift of $\omega_i$.

If $\tau$ is a principal
series type, we let $\theta_\tau$ denote the $\GL_2(\cO_L)$-type
associated to
$\tau$ by the inertial local Langlands correspondence, viewed as a
representation of $\GL_2(k)$.  If $\tau =
\tau_a$ then we write $\theta_a$ for $\theta_\tau$;
 so if $\tau_a$ is non-scalar then explicitly
$$\theta_a = \Ind_B^{\GL_2(k)} \left( \prod_{i=0}^{f-1} \om_i^{c_i+b_i+a_i}
\otimes  \prod_{i=0}^{f-1}  \om_i^{c_i-a_i}\right)$$
where $B$ is the subgroup of upper-triangular matrices in $\GL_2(k)$,
$\psi_1\otimes\psi_2$ denotes the character of $B$ sending
$\left(\begin{array}{cc} x & * \\ 0 & y \end{array}\right)$ to
$\psi_1(x)\psi_2(y)$, and we recall that we are identifying characters
$k^{\times} = \ell^{\times} \to \Qpbar^{\times}$ with characters $I_L
\to \Qpbar^{\times}$ via the local Artin map with its geometric normalisation.
Note that if $\tau_a$ is scalar, then 
$\sum_{i=0}^{f-1} (b_{i} + 2a_{i})p^{f-i} \equiv 0 \pmod{p^f - 1}$, which occurs
only if $a = b = (0,\ldots,0)$, or $a=(e',\ldots,e')$ and $b =
(p-1-2e',\ldots,p-1-2e')$.
In this case we let
$$\theta_{\tau_a} = \theta_a = \det\circ \prod_{i=0}^{f-1} \om_i^{c_i+b_i+a_i},\quad\mbox{and}
\quad \theta'_{\tau_a} = \theta'_a = \theta_a\otimes \Ind_B^{\GL_2(k)}  \mathbf{1}.
$$
We then define
$$\begin{array}{rcl}
W_a &:=& \{\, \mu \in W' \,:\, \mbox{$\mu$ is a Jordan--H\"older constituent of $\thetabar_a$}\,\},\\
\mbox{and}\ W'_a&:=&
 \{\, \mu \in W' \,:\, \mbox{$\mu$ is a Jordan--H\"older constituent of $\thetabar'_a$}\,\}.\end{array}.$$
We will see shortly that $W_a$ is in fact contained in $W$.  Note that $W'_a = W_a$ unless $a=b=(0,\ldots,0)$ in which case
$W_a = \{\mu(J,d)\}$ and $W'_a = \{\mu(J,d),\mu'(J,d)\}$ with $J= \{0,\ldots,f-1\}$ and
$d=(0,\ldots,0)$, or $a=(e',\ldots,e')$ and $b=(p-1-2e',\ldots,p-1-2e')$ in which case
$W_a = \{\mu(J,d)\}$ and $W'_a = \{\mu(J,d),\mu'(J,d)\}$ with $J= \varnothing$ and
$d=(e',\ldots,e')$.

\begin{prop}\label{prop:partition} \
\begin{enumerate}
\item $W$ (resp. $W'$) is the disjoint union of the $W_a$ (resp. $W'_a$) for $a\in A$.
\item $|W_a| = 2^{f-\delta_a}$ where 
$\delta_a = \left|\{\,i\in\{0,\ldots,f-1\}\, : \,\mbox{$a_i=0$ or $e'$}\,\}\right|$.
\item If $\mu(J,d)$ or $\mu'(J,d) \in W'_a$, then $\sum_{i=0}^{f-1}a_i = \sum_{i=0}^{f-1} d_i$.
\end{enumerate}
\end{prop}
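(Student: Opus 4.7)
The plan is to identify $W_a$ as the set of Jordan--Hölder constituents of $\thetabar_a$ that lie in $W'$, by matching the standard formula for JH constituents of tame principal series reductions against the explicit formula~(\ref{eq:explicit-weights}) for the weights $\mu(J,d)$.

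First assume that $\tau_a$ is non-scalar, so that $\theta_a = \Ind_B^{\GL_2(k)}(\psi_1 \otimes \psi_2)$ with $\psi_1\psi_2^{-1} = \prod_{i} \omega_i^{s_i}$ where $s_i := b_i + 2a_i$; by the genericity hypothesis each $s_i$ lies in $[0,p-1]$. The standard description (see e.g.~\cite{bdj}) of the JH constituents of $\thetabar_a$ expresses each factor as a $\mu_{m,n}$ depending on a subset $J \subseteq \{0,\ldots,f-1\}$ via a four-case split governed by the pair $(i \in J,\, i+1 \in J)$. A direct case-by-case comparison with~(\ref{eq:explicit-weights}) shows that a JH factor of $\thetabar_a$ lies in $W'$ precisely when it equals $\mu(J,a)$ (hence $d = a$), and that the admissible subsets $J$ are exactly those with $i \in J$ whenever $a_i = 0$ and $i \notin J$ whenever $a_i = e'$; these constraints are precisely the conditions forcing $d_i = a_i$ to lie in the range prescribed by~(\ref{eq:explicit-weights}), while at positions $i$ with $a_i \in [1,e'-1]$ the choice of $i \in J$ at that position is free.

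This identification yields parts~(2) and~(3) for $W_a$: the count of admissible $J$'s is $2^{f-\delta_a}$ since each position $i$ with $a_i \notin \{0,e'\}$ contributes a factor of two, and $d = a$ gives the equality $\sum d_i = \sum a_i$. Part~(1) for $W$ then follows from Lemma~\ref{lem:distinct-weights}: the weights $\mu(J,d)$ are pairwise distinct, so the assignment $\mu(J,d) \mapsto d$ is a well-defined map $W \to A$ whose fibre over $a$ is precisely $W_a$, giving the desired partition.

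In the two exceptional scalar cases of $\tau_a$, the pair $(J,d)$ is uniquely forced ($J = \{0,\ldots,f-1\}, d = 0$ when $a = b = 0$, and $J = \varnothing, d = (e',\ldots,e')$ when $a = (e',\ldots,e')$ and $b = (p-1-2e',\ldots,p-1-2e')$); the reduction $\thetabar_a$ consists of the single weight $\mu(J,d)$, while the twist $\theta'_a = \theta_a \otimes \Ind_B^{\GL_2(k)} \mathbf{1}$ contributes the additional weight $\mu'(J,d)$ arising from the Steinberg JH constituent of $\Ind_B^{\GL_2(k)} \mathbf{1}$. One checks directly that $\sum d_i = \sum a_i$ in each exceptional case (both sides equal $0$ or $fe'$) and that $\mu'(J,d)$ does not appear as a JH factor of any other $\thetabar_{a'}$, completing~(1)--(3) for $W'$. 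The main obstacle is the careful matching in the non-scalar case: one must align the four sub-cases of the JH formula with those of~(\ref{eq:explicit-weights}) and handle the boundary positions where $a_i \in \{0, e'\}$, at which one of the two potential choices for $i \in J$ would produce a $d_i$ outside the range prescribed by~(\ref{eq:explicit-weights}); this is what causes the count to drop from $2^f$ to $2^{f-\delta_a}$.
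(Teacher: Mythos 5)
Your plan — matching the JH--constituent formula against \eqref{eq:explicit-weights} by a four-case comparison, then counting admissible $J$'s — is the right general shape, and it is essentially what the paper does. However, the central claim of your comparison is wrong, and this propagates through the whole argument.

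You assert that a JH factor of $\thetabar_a$ lies in $W'$ precisely when it equals $\mu(J,a)$, i.e.~that $d = a$ pointwise. This is false. If you actually carry out the four-case matching between \eqref{eq:explicit-weights} and \eqref{eq:explicit-JH} with $J'=J$, the equalities of $m_i$ force
$$a_i = \left\{\begin{array}{ll}
d_i+1, & i\in J,\ i+1\not\in J,\\
d_i-1, & i\not\in J,\ i+1\in J,\\
d_i, & \mbox{otherwise,}\end{array}\right.$$
which is \eqref{eqn:a-vs-d} in the paper: the tuples $a$ and $d$ genuinely differ at the ``boundary'' positions of $J$. What \emph{is} true is that $\sum_i a_i = \sum_i d_i$, but this is precisely because the boundary contributions $\pm 1$ cancel in pairs (the number of $i$ with $i\in J$, $i+1\not\in J$ equals the number with $i\not\in J$, $i+1\in J$). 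Part~(3) therefore needs this cancellation argument; it does not fall out of a pointwise identity $d=a$. The same error infects your description of the constraints on $J$: the genuine constraint, obtained by translating the range conditions on $d$ through \eqref{eqn:a-vs-d}, is that $i+1 \in J$ whenever $a_i=0$ and $i+1\not\in J$ whenever $a_i=e'$ — the restriction is on $i+1$, not on $i$ as you state. (The final count $2^{f-\delta_a}$ is the same either way, but the argument as written is incorrect.)

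A second gap is that you never establish that $\nu(a,J')=\mu(J,d)$ forces $J'=J$. A priori a given JH factor of $\thetabar_a$ could be isomorphic to $\mu(J,d)$ for some $J\ne J'$, and the disjointness claim in part~(1) would then need more than Lemma~\ref{lem:distinct-weights}. Ruling this out uses the genericity hypothesis in an essential way (first showing $m=m'$, then $J=J'$), and that part of the argument carries most of the weight in the paper's proof; your proposal assumes it implicitly. Finally, the map $W\to A$ underlying part~(1) is not $\mu(J,d)\mapsto d$ but $\mu(J,d)\mapsto a$ with $a$ given by \eqref{eqn:a-vs-d}; with $d=a$ false, your stated map does not have fibres equal to $W_a$.
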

\begin{proof} First note that to prove the proposition, we may twist $\barrho$ so as
to assume $c_i = 0$ for $i=0,\ldots,f-1$.

To prove (1), we must show that for each $(J,d)$ as in the definition
of $W$, there is a unique $a \in A$ such that $\mu(J,d)$ is a Jordan--H\"older constituent of
$\thetabar_a$.  For this we use the explicit description of $\thetabar_a^\ssimp$ given for
example in \cite[Prop.~1.1]{DiamondDurham}.  In particular the Jordan--H\"older constituents are
of the form $\nu(a,J')$ for certain subsets $J' \subseteq \{0,\ldots,f-1\}$, where 
$\nu(a,J') = \mu_{m',n'}$ with $m' = (m_0',\ldots,m_{f-1}')$ and $n' = (n_0',\ldots,n_{f-1}')$ defined by
\begin{equation}\label{eq:explicit-JH}\begin{array}{lll}
 m'_i = p -  1 - a_i,& n'_i = b_i + 2a_i,&\mbox{if $i\in J'$ and $i+1\in J'$;}\\
 m'_i =  p -  a_i,& n'_i = b_i + 2a_i - 1,&\mbox{if $i\in J'$ and $i+1\not\in J'$;}\\
 m'_i = b_i + a_i,& n'_i = p - 2 - b_i - 2a_i,&\mbox{if $i\not\in J'$ and $i+1\in J'$;}\\
 m'_i = b_i + a_i ,& n'_i = p - 1 - b_i - 2a_i,&\mbox{if $i\not\in J'$ and $i+1\not\in J'$.}
  \end{array}\end{equation}
The constituents are then precisely the $\nu(a,J')$ for those $J'$ such that $n'_i \ge 0$ for all $i$,
except in the case that $\tau_a$ is scalar, in which case there is only one
Jordan--H\"older constituent, namely $\nu(a,J')$ with $J' = \{0,\ldots,f-1\}$ 
(resp.~$J' = \varnothing$) if  $a = b = (0,\ldots,0)$  
(resp.~$a=(e',\ldots,e')$ and $b = (p-1-2e',\ldots,p-1-2e')$).

Suppose now that $\nu(a,J') = \mu_{m',n'} \simeq \mu_{m,n} = \mu(J,d)$.
Note that $0 \le m_i' \le p$ for $i=0,1,\ldots,f-1$; we will rule out the
possibility that $m_i' = p$ for some $i$.  Indeed if  $m'_{i} = p$, then
we must have $a_{i} = 0$, $i \in J'$ and  $i+1 \not\in   J'$. It follows
that $m'_{i+1} = b_{i+1} + a_{i+1} \le p -2$, so that
$$ 0  <  \sum_{j=0}^{f-1} m'_{i-j}p^j  < p^f - 1.$$
Since $0 \le m_i \le p - 1$ for all $i$ (and not all $0$), and
$$ \sum_{j=0}^{f-1} m_{i-j}p^j \equiv  \sum_{j=0}^{f-1} m'_{i-j}p^j  \pmod{p^f - 1},$$
we see that the sums are equal.  Therefore $m_{i} \equiv m'_{i} \equiv 0 \pmod{p}$,
so in fact $m_{i} = 0$.  The definition of $m_{i}$ then implies that $b_{i} = 0$,
giving $n'_{i} = -1$, a contradiction.  Note also that if $m'_i = 0$ for all $i$,
then $a = b = (0,\ldots,0)$ and $J' = \varnothing$, which is also impossible.
Since
$$\sum_{i=1}^{f} m_{i}p^{f-i} \equiv  \sum_{i=1}^{f} m'_{i}p^{f-i}  \pmod{p^f - 1}$$
and both sums are between $1$ and $p^f-1$ (inclusive), it follows that
$(m,n) = (m',n')$.

Next we show that $J' = J$.   If $i \in J$ and $i \not\in J'$ for some $i$, then
$$m_i' = a_i + b_i \le p - 1 - e' < p - 1 - d_i = m_i,$$
giving a contradiction.  If $i \not\in J$ and $i \in J'$ for some $i$, then
the inequalities
$$m_i \le b_i + d_i \le p - 1 - e' \le p - 1 - a_i \le m_i'$$
must all be equalities, which implies that
$i + 1 \not\in J$, $i + 1 \in J'$,  $b_i = p - 1 - 2e'$ and $a_i = e'$.
Iterating gives $J' = \{0,\ldots,f-1\}$, $b = (p-1-2e',\ldots, p-1-2e')$ and $a = (e',\ldots,e')$,
which is impossible.  

Having shown that $J' = J$, it follows that $a$ is determined by the equation
\begin{equation} \label{eqn:a-vs-d} a_i = \left\{\begin{array}{ll}
d_i+1,&\mbox{if $i\in J$, $i+1\not\in J$}, \\
d_i-1,&\mbox{if $i\not\in J$, $i+1\in J$}, \\
d_i,&\mbox{otherwise.}\end{array}\right.\end{equation}
As indeed $(m',n') = (m,n)$ in this case (as well as $n_i \neq -1$ and
$a_i \in [0,e']$ for all $i$), this gives the assertion
for $W$.  The assertion for $W'$ follows upon checking that when $b = (0,\ldots,0)$ (resp. $b =
(p-1-2e',\ldots,p-1-2e')$) the constituent $\mu'(J,d)$ with $J =
\{0,\ldots,f-1\}$ and $d = (0,\ldots,0)$ (resp. $J = \varnothing$ and
$d = (e',\ldots,e')$) is not contained in $W_a$ with $a \neq
(0,\ldots,0)$ (resp. $a \neq (e',\ldots,e')$).

To prove (2) we fix $a$ and determine the $J \subseteq \{0,\ldots, f- 1\}$ for 
which (\ref{eqn:a-vs-d}) holds for some $d$ as in the definition of $W$.
The condition that $0 \le d_i \le e' - 1$ if $i \in J$ and $1 \le d_i \le e'$
if $i\not\in J$ translates into the condition that $0 \le a_i \le e' - 1$ if $i+1 \in J$,
and $1\le a_i \le e'$ if $i+1 \not\in J$.  Therefore the only restrictions on $J$ are
that  $i+1 \in J$ if $a_i = 0$, and that $i + 1 \not\in J$ if $a_i = e'$. The number of
such $J$ is $2^{f-\delta_a}$ as required.

Part (3) in the case $\mu(J,d) \in W'_a$ is immediate from (\ref{eqn:a-vs-d})
on noting that there are the same number of $i$ satisfying $i\in J$, $i+1\not\in J$
as there are satisfying $i\not\in J$, $i+1\in J$.   The formula in the case
$\mu'(J,d) \in W'_a$ is immediate from the definitions.
\end{proof}

\begin{rem}
  \label{rem:schein-partition}
  We remark that Schein~\cite[Prop.~3.2]{MR2482000} also gives a
  decomposition of $W'$ into subsets which are typically of cardinality $2^f$,
  but it is visibly different from ours; for example, 
   it is a decomposition into $(e')^f$ subsets rather than
  $(e'+1)^f$, and if $f=1$, then the subsets are constituents of the reduction of a
  supercuspidal rather than principal series type.
\end{rem}

Recall that $L(\chi_1,\chi_2,\tau)$ denotes the set of all extensions
of $\chi_1$ by $\chi_2$ that arise as the generic fibre of a model of
type $\tau$.  We translate Corollary~\ref{cor:intersections} into the
notation of this section.

\begin{thm} \label{thm:intersections-redux}
For any $a,a' \in A$, we have
\begin{enumerate}
\item $\dim_{k_E} L(\chi_1,\chi_2,\tau_a) = \sum_{i=0}^{f-1} (e' - a_i)$,
\item $L(\chi_1,\chi_2,\tau_a) \cap L(\chi_1,\chi_2,\tau_{a'}) = L(\chi_1,\chi_2,\tau_{a''})$
where $a_i'' = \max(a_i,a_i')$.
\end{enumerate}
\end{thm}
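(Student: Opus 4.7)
The plan is to deduce the theorem from Corollary~\ref{cor:intersections} by twisting and matching parameters. First I would twist by $\chi_1^{-1}$: the isomorphism $\Ext^1_{k_E[G_L]}(\chi_1,\chi_2) \isomap \Ext^1_{k_E[G_L]}(1,\chi)$ with $\chi := \chi_2\chi_1^{-1}$ carries $L(\chi_1,\chi_2,\tau_a)$ to $L(1,\chi,\tau_a\otimes \chi_1^{-1})$, and on inertia the twisted type becomes $\prod_{i} \om_i^{b_i+a_i} \oplus \prod_{i} \om_i^{-a_i}$. Using the identity $\prod_{i=0}^{f-1}\omega_i^{p-1}=\omega_0^{p^f-1}=1$, I would rewrite the second summand as $\prod_i\omega_i^{p-1-a_i}$, and match the framework of Section~\ref{sec:mainsetting} by setting $\nu_i := b_i+a_i$ and $\nu'_i := p-1-a_i$.

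Next I would verify the hypotheses of Proposition~\ref{prop:coagulation} with these parameters. The genericity bounds $0\le b_i \le p-1-2e'$ and $0\le a_i \le e'$ yield immediately $\nu'_i \in [p-1-e',p-1]$, $\nu_i+\nu'_i = b_i+p-1 \ge p-1$, and $\nu_i \le \nu'_i$ since $2a_i+b_i \le 2e'+(p-1-2e')=p-1$. The condition $\chi|_{I_{G_L}} = \lambda\lambda'\epsilonbar$ is $\prod_i\omega_i^{b_i+e'} = \prod_i\omega_i^{b_i+e'}$, which holds tautologically. Finally $\chi\neq 1$: since $e' \ge 1$ and $0 \le b_i+e' \le p-1-e'$ one has $0 < \sum_i(b_i+e')p^i < p^f-1$, so $\chi|_{I_{G_L}} \neq 1$ a fortiori. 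In particular $\mu_i = \nu'_i - (p-1-e') = e'-a_i$.

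Now Corollary~\ref{cor:intersections}(1) gives
\[ \dim_{k_E} L(\chi_1,\chi_2,\tau_a) = \sum_{i=0}^{f-1} \mu_i = \sum_{i=0}^{f-1}(e'-a_i), \]
proving (1). For (2), I would run the same matching for both $\tau_a$ and $\tau_{a'}$: the resulting $\ddot\tau$ of Corollary~\ref{cor:intersections}(2) has $\ddot\nu_i = \max(b_i+a_i,b_i+a_i') = b_i + a''_i$ and $\ddot\nu'_i = \min(p-1-a_i,p-1-a'_i) = p-1-a''_i$, so after reinstating the $\chi_1$-twist $\ddot\tau$ is precisely $\tau_{a''}$; thus Corollary~\ref{cor:intersections}(2) becomes exactly claim~(2).

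There is no genuine obstacle here: the argument is essentially a change of notation. The only real care needed is in the bookkeeping of the twist and the passage from the exponents $\{b_i+a_i,-a_i\}$ appearing naturally in $\tau_a\otimes\chi_1^{-1}$ to the normalized exponents $\{b_i+a_i,p-1-a_i\}$ in $[0,p-1]$ demanded by Proposition~\ref{prop:coagulation}, together with the verification that genericity is exactly the hypothesis needed to ensure its applicability for every $a\in A$.
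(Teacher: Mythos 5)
Your proposal is correct and follows the same route as the paper: twist to $\chi_1=1$, identify $\nu_i=a_i+b_i$ and $\nu'_i=p-1-a_i$, check the hypotheses of Proposition~\ref{prop:coagulation}/Corollary~\ref{cor:intersections} via genericity, and read off $\mu_i=e'-a_i$ and the compatibility of $\ddot\tau$ with $a''$. The one small point the paper flags that you omit is the degenerate case $a=b=(0,\ldots,0)$, where the convention forces $\nu_i=p-1$ rather than $a_i+b_i=0$; this does not affect the verification (the inequalities still hold and $\mu_i$ depends only on $\nu'_i$), but it is worth stating, and also note that your intermediate identity $\prod_i\omega_i^{p-1}=\omega_0^{p^f-1}$ should read $\omega_0^{p(p^f-1)}$, though the conclusion $\prod_i\omega_i^{p-1}=1$ is of course still correct.
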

\begin{proof}
Reduce to the case of $\chi_1 = 1$ by twisting.  Our genericity
hypothesis rules out $\chi_2=1$.  Note that for the
type $\tau_a$ we have $\nu'_i = p-1-a_i$ and $\nu_i = a_i + b_i$,
except that when $a = b = 0$ we (by convention) have $\nu_i = p-1$ for all $i$.  The
conditions $\nu'_i \in [p-1-e',p-1]$, $\nu'_i \ge \nu_i$, and $\nu'_i
+ \nu_i \ge p-1$ are all easily checked.  Now $\mu_i = e' - a_i$ and
(in the notation of Corollary~\ref{cor:intersections})
if $(\tau,\dot{\tau}) = (\tau_a,\tau_{a'})$ then $\ddot{\tau} =
\tau_{a''}$, as desired.
\end{proof}

\section{The main results}\label{sec:main-results}
\subsection{The global setting}
\label{subsec:global setting}
Let $F$ be a totally real field and $\rhobar:G_F \to \GL_2(k_E)$ a continuous
representation.  We suppose that $\rhobar$ is automorphic in the
sense that it arises as the reduction of a $p$-adic representation of $G_F$
associated to a cuspidal Hilbert modular eigenform of some weight and level,
or equivalently to a cuspidal holomorphic automorphic representation of
$\GL_2(\A_F)$.  We fix a place $v$ of $F$ dividing $p$, and we let $L
= F_v$, so that $k_v = k = \ell$ in what follows.

Let $D$ be a quaternion algebra over $F$ satisfying the following hypotheses:
\begin{itemize}
\item $D$ is split at all primes dividing $p$;
\item $D$ is split at at most one infinite place of $F$;
\item If $w$ is a finite place of $F$ at which $D$ is ramified, then
$\rhobar|_{G_{F_w}}$ is either irreducible, or equivalent to a representation of the form
$\psi \otimes \left(\begin{array}{cc} \epsilonbar & * \\ 0 & 1 \end{array}\right)$
for some character $\psi:G_F \to k_E^\times$.
\end{itemize}
 
 We let $r$ denote the number of infinite places of $F$ at which $D$ is split  (so $r = 0$ or $1$),
 and if $r = 1$ we let $\xi$ denote that infinite place and fix an isomorphism 
 $D_\xi \simeq M_2(\R)$.   We also fix a maximal order $\cO_D$ of $D$ and an
 isomorphism $\cO_{D_v} \simeq M_2(\cO_L)$. 
 
 \begin{rem} \label{rem:splitD_p}
 The hypothesis that $D$ is split at all primes dividing $p$ is made
 only to be able to invoke the results of \cite{GeeKisin} on the weight part of
 Serre's Conjecture.  
   We expect however that the proofs of the required variants of their results,
  and hence the main results of this paper, carry over if we only require
 that $D$ is split at $v$,
 without specifying the behavior of $D$ at the other primes dividing $p$.
 \end{rem}
   
 For any open compact subgroup $U$ of $D_f^\times = (D\otimes\widehat{\Z})^\times$, 
 we let $X_U$ denote the associated Shimura variety of dimension $r$:
 $$X_U = D^\times \backslash ( (\frakH^\pm)^r \times D_f^\times)/ U,$$
 where if $r =1$ then $D^\times$ acts on $\frakH^\pm = \C - \R$ via the isomorphism $D_\xi^\times \simeq
 \GL_2(\R)$, and we let $S^D(U) = H^r(X_U,k_E)$.   
 (Recall that $r$ and $\xi$ are defined just before Remark~\ref{rem:splitD_p}.)
 Let $\Sigma_U$ denote
 the set of all finite places $w$ of $F$ such that (i) $w$ does not divide $p$,
 (ii) $D$ is split at $w$, (iii) $U$ contains $\cO_{D_w}^\times$, and
 (iv) $\rhobar$ is unramified at $w$.  Then
 $S^D(U)$ is equipped with the commuting action of Hecke operators
 $T_w$ and $S_w$ for all $w \in \Sigma_U$, hence with an action
 of the polynomial algebra over $k_E$ generated by variables
 $T_w$ and $S_w$ for $w \in \Sigma_U$.  We denote this algebra
 by $\TT^{\Sigma_U}$, and let $\m_{\rhobar}^{\Sigma_U}$ denote the kernel of the
 $k_E$-algebra homomorphism $\TT^{\Sigma_U} \to k_E$ defined by
 $$T_w \mapsto  \Nm(w)\Trace(\rhobar(\Frob_w));
  \qquad S_w \mapsto  \Nm(w)\det(\rhobar(\Frob_w))$$
  for $w \in \Sigma^U$.
  We let $S^D(U)[\m_{\rhobar}^{\Sigma_U}]$ denote that set of $x \in S^D(U)$
  such that $Tx = 0$ for all $T\in \m_{\rhobar}^{\Sigma_U}$.
 
 Now let $U_v$ denote the kernel of the map $\cO_{D_v}^\times \to
 \GL_2(\kv)$ 
 defined
 by composing the restriction of our fixed $\cO_{D_v} \simeq M_2(\cO_L)$ with reduction
 mod $v$. If $U = U_vU^v$ for some open compact $U^v \subseteq \ker(D_f^\times \to D_v^\times)$,
 then the natural right action of $\cO_{D,v}^\times$ on $X_U$ induces
 a left action of $\GL_2(\kv)$ on $S^D(U)$ which commutes with that of 
 $\TT^{\Sigma_U}$.
 
 \begin{defn}  \label{defn:modular_weights}
 If $\mu$ is an irreducible representation of $\GL_2(\kv)$ over $k_E$,
 then we say that $\rhobar$ is {\em modular of weight $\mu$ with
   respect to $D$ and $v$} 
 if
 $$\Hom_{k_E[\GL_2(\kv)]} (\mu, S^D(U)[\m_{\rhobar}^{\Sigma_U}]) \neq 0$$
 for some open compact subgroup $U = U_vU^v$ as above.
 We let $W^{D,v}_{\modular}(\rhobar)$ denote the set of Serre
weights for which $\rhobar$ is modular with respect to $D$ and $v$.
\end{defn}

The weight part of Serre's Conjecture for $\rhobar$ (at $v$, with respect to $D$)
states that
\begin{equation} \label{eqn:weight_conj}
 W^{D,v}_{\modular}(\rhobar) = W_{\expl}(\rhobar|_{G_L}),
 \end{equation}
where $W_{\expl}(\rhobar|_{G_L})$
is the set of predicted Serre weights as in~\cite[Def.~4.1.14]{blggU2},
recalled in Definition~\ref{defn:predicted_weight} above
in the case that $\rhobar|_{G_L}$ is reducible.

One of the inclusions in (\ref{eqn:weight_conj}) is proved under mild
technical hypotheses by Gee and Kisin in \cite{GeeKisin}, building
on \cite{geeBDJ, geesavitttotallyramified, GLS11, blggU2, GLS2}.
More precisely, we have the following result
(\textit{cf.}~\cite[Def.~5.5.3, Cor.~5.5.4]{GeeKisin}):
\begin{thm} \label{thm:GeeKisin}
Suppose that $p>2$, $\rhobar|_{G_{F(\zeta_p)}}$ is irreducible,
and if $p=5$, then the projective image of $\rhobar|_{G_{F(\zeta_5)}}$ is not
isomorphic to $A_5$.  Then the following hold:
\begin{enumerate}
\item $W^{D,v}_{\modular}(\rhobar)$ depends only on $\rhobar|_{G_L}$;
\item $W_{\expl}(\rhobar|_{G_L}) \subseteq W^{D,v}_{\modular}(\rhobar)$;
\item $W_{\expl}(\rhobar|_{G_L})  = W^{D,v}_{\modular}(\rhobar)$ if $L$ is
unramified  or totally ramified over $\Q_p$.
\end{enumerate}
\end{thm}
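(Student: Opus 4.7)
The plan is straightforward: deduce the theorem from the cited results of Gee--Kisin~\cite{GeeKisin}, together with its prerequisites~\cite{geeBDJ, geesavitttotallyramified, GLS11, blggU2, GLS2}. Concretely, parts (1)--(3) are essentially a reformulation of \cite[Def.~5.5.3, Cor.~5.5.4]{GeeKisin} in the notation of this paper. The main task is therefore to verify that the definition of $W^{D,v}_\modular(\rhobar)$ introduced above (in terms of Hecke eigenspaces in the cohomology $S^D(U)$ of the quaternionic Shimura variety at level $U$, cut out by $\m_\rhobar^{\Sigma_U}$) agrees with the notion of modularity used in \cite{GeeKisin}, and that the genericity hypotheses we have imposed on $\rhobar$ translate to the ones required there.

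For (1), the underlying mechanism is the Taylor--Wiles--Kisin patching method: one patches together the spaces $S^D(U)[\m_\rhobar^{\Sigma_U}]$ into a module $M_\infty$ over a patched global deformation ring, which factors through the local framed deformation ring at $v$. Membership of a Serre weight $\mu$ in $W^{D,v}_\modular(\rhobar)$ is then equivalent to the non-vanishing of a certain local quantity computed from $M_\infty$ and a lattice in the type $\theta_\mu$, and this quantity depends only on $\rhobar|_{G_L}$. For (2), the strategy is: each $\mu \in W_\expl(\rhobar|_{G_L})$ comes by Definition~\ref{defn:predicted_weight} with a reducible crystalline lift of $\rhobar|_{G_L}$ having Hodge--Tate type lifting $\mu$; one globalizes this local lift (via potential automorphy) and then invokes a modularity lifting theorem, comparing Hodge--Tate weights with Serre weights in the usual way, to conclude that $\rhobar$ is modular in weight~$\mu$. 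For (3), the extra input is the weight-elimination theorems of \cite{GLS11, GLS2}, which in the unramified or totally ramified case classify the possible reductions of potentially Barsotti--Tate representations sharply enough to rule out all Serre weights outside $W_\expl(\rhobar|_{G_L})$.

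The main obstacle--indeed the motivation of the present paper--is that no such sharp classification was available at the time of writing for general ramified $L$, so the converse inclusion in (3) could not be obtained from existing $p$-adic Hodge theoretic input in full generality. Establishing (3) for generic locally reducible $\rhobar$ and arbitrary $L$ is precisely the content of Theorem~\ref{ithm:main-global}; the present theorem is used in the sequel only as a black box supplying (1), (2), and the unramified/totally ramified case of (3).
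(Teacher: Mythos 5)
Your proposal correctly identifies the theorem as a direct citation of \cite[Def.~5.5.3, Cor.~5.5.4]{GeeKisin} together with its precursor papers, which is exactly how the present paper treats it---no independent proof is offered here, only the observation that the present definition of $W^{D,v}_{\modular}(\rhobar)$ matches the notion of modularity used there. Your sketch of the underlying machinery is accurate in outline, with one small imprecision in (2): $\rhobar$ is already a global, modular representation, so no globalization of a local lift is needed; rather, one applies potential diagonalizability of the crystalline lift together with a modularity lifting theorem (in the style of Barnet-Lamb--Gee--Geraghty--Taylor) to conclude that the given global $\rhobar$ is modular of the specific weight $\mu$.
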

In particular the theorem ensures (under its hypotheses)
that $W^{D,v}_{\modular}(\rhobar)$ is independent of the
choice of $D$, which we henceforth suppress from the notation.

We will now restrict to the case where $\rhobar|_{G_L}$ is reducible and generic
(see Definition~\ref{defn:generic}).   Our main global result is the following:
\begin{thm}\label{thm:mainglobalresult}  If $\rhobar$ is as in Theorem~\ref{thm:GeeKisin} and $\rhobar|_{G_L}$
is reducible and generic, then
$$W_\expl(\rhobar|_{G_L}) = 
     W^v_\modular(\rhobar) \cap W_\expl(\rhobar|_{G_L}^\ssimp).$$
\end{thm}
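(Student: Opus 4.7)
The plan is to derive Theorem~\ref{thm:mainglobalresult} from a local three-way equivalence. The inclusion $W_\expl(\rhobar|_{G_L}) \subseteq W^v_\modular(\rhobar) \cap W_\expl(\rhobar|_{G_L}^\ssimp)$ is immediate from Theorem~\ref{thm:GeeKisin}(2) together with the definitions. For the reverse inclusion, I will use the partition of Proposition~\ref{prop:partition}: any $\mu \in W_\expl(\rhobar|_{G_L}^\ssimp)$ lies in some $W'_a$, and (after handling the exceptional weights $\mu'(J,d)$ by a direct argument, exploiting that they arise only when $\chi_1^{-1}\chi_2|_{I_{G_L}} = \epsilonbar^{\pm 1}|_{I_{G_L}}$) it suffices to prove that for each $a \in A$ and each $\mu \in W_a$ the three conditions
\begin{enumerate}
\item $\mu \in W_\expl(\rhobar|_{G_L})$,
\item $\mu \in W^v_\modular(\rhobar)$,
\item $c_{\rhobar|_{G_L}} \in L(\chi_1,\chi_2,\tau_a)$
\end{enumerate}
are equivalent; the theorem then follows from (1)$\Leftrightarrow$(2).

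Step (1)$\Rightarrow$(2) is Theorem~\ref{thm:GeeKisin}(2). For (2)$\Rightarrow$(3), I will exploit the fact that $\mu$ is by construction a Jordan--H\"older constituent of $\thetabar_a$: modularity of $\rhobar$ of weight~$\mu$ should upgrade, via the inertial local Langlands correspondence applied to $\theta_a$ and a suitable weight-lifting argument (in the spirit of Gee--Kisin~\cite{GeeKisin} and the analogous step of \cite{GLS11}), to modularity of $\rhobar$ with respect to an automorphic representation whose local component at~$v$ has type $\theta_a$. The associated $p$-adic Galois representation then furnishes a lift of $\rhobar$ whose restriction to $G_L$ is potentially Barsotti--Tate of inertial type $\tau_a$; such a lift yields a Breuil module with descent data from $K$ to $L$ realising $c_{\rhobar|_{G_L}}$ as the generic fibre of a model of type $\tau_a$, giving~(3) by definition of $L(\chi_1,\chi_2,\tau_a)$.

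Composing gives (1)$\Rightarrow$(3); the remaining implication (3)$\Rightarrow$(1) will be proved by a dimension count. Define $L^{\mathrm{crys}}_\mu \subseteq H^1(G_L,k_E(\chi_2\chi_1^{-1}))$ to be the subspace of extension classes whose corresponding representation admits a reducible crystalline lift of Hodge--Tate type lifting $\mu$. A direct computation, lifting $\chi_1,\chi_2$ to crystalline characters with the Hodge--Tate weights dictated by $\mu$ and analysing the resulting $\Ext^1$ in the crystalline category, will show that $\dim_{k_E} L^{\mathrm{crys}}_\mu = \sum_{i=0}^{f-1}(e'-a_i)$. The already-proved (1)$\Rightarrow$(3) yields $L^{\mathrm{crys}}_\mu \subseteq L(\chi_1,\chi_2,\tau_a)$, while Theorem~\ref{thm:intersections-redux}(1) computes the ambient dimension to be the same $\sum_{i=0}^{f-1}(e'-a_i)$. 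Equality of dimensions forces $L^{\mathrm{crys}}_\mu = L(\chi_1,\chi_2,\tau_a)$, whence any class $c$ satisfying~(3) also lies in $L^{\mathrm{crys}}_\mu$, yielding~(1).

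The principal obstacle will be step (2)$\Rightarrow$(3): passing from modularity of weight~$\mu$ to the existence of an automorphic lift of type $\theta_a$ at~$v$ demands a careful combination of the inertial local Langlands correspondence with the comparison between quaternionic automorphic forms of $\GL_2(\cO_L)$-type $\theta_a$ and their reductions mod~$p$, and it is here that the global hypotheses of Theorem~\ref{thm:GeeKisin} and the specific engineering of $W_a \subseteq \mathrm{JH}(\thetabar_a)$ in Section~\ref{sec:weights-types} enter substantively. The dimension count in (3)$\Rightarrow$(1) is the second delicate point, but it reduces to a concrete Fontaine--Laffaille-style computation analogous to those carried out in Sections~\ref{sec:extensions-Breuil-modules}--\ref{sec:models of principal series type}.
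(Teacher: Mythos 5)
Your proposal follows the paper's strategy: the three-way equivalence for $\mu\in W_a$ is exactly how \S\ref{subsec:proofs} proceeds, with (2)$\Rightarrow$(3) corresponding to Lemma~\ref{lem:pBT_lifts} and the dimension count for (3)$\Rightarrow$(1) corresponding to Theorem~\ref{thm:flat_vs_crys} combined with Theorem~\ref{thm:intersections-redux}(1) and Proposition~\ref{prop:partition}(3).

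However, there is a gap in your (3)$\Rightarrow$(1). You claim that ``the already-proved (1)$\Rightarrow$(3) yields $L^{\mathrm{crys}}_\mu\subseteq L(\chi_1,\chi_2,\tau_a)$,'' but your (1)$\Rightarrow$(3) is established via (2), i.e.\ via global modularity of the fixed $\rhobar$, so as stated it controls only the single class $c_{\rhobar|_{G_L}}$. To obtain the inclusion of the full local subspace $L^{\mathrm{crys}}_\mu$ you must apply the chain (1)$\Rightarrow$(2)$\Rightarrow$(3) to \emph{every} local extension class admitting a suitable reducible crystalline lift, and for that you need a globalization step: given an arbitrary $\overline{\varrho}:G_L\to\GL_2(k_E)$ with $\mu\in W_\expl(\overline{\varrho})$, realise $\overline{\varrho}\simeq\rhobar|_{G_L}$ for some global $\rhobar$ satisfying the hypotheses of Theorem~\ref{thm:GeeKisin}. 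The paper does exactly this via \cite[Cor.~A.3]{GeeKisin} inside the proof of Theorem~\ref{thm:flat_vs_crys}; without such a step the dimension count does not close. You should also be warned that the exceptional weights $\mu'(J,d)$ (arising when $\chi_1^{-1}\chi_2|_{I_L}=\epsilonbar^{\pm 1}|_{I_L}$) are not dispatched by a routine ``direct argument'': in the case $\chi_1|_{I_L}=\chi_2\epsilonbar|_{I_L}$, $J=\varnothing$, the paper requires a local-global compatibility argument at $v$ separating the unramified principal series case (in which $\rhobar$ is modular of the companion weight $\mu(J,d)$, and one reduces to the already-handled weights) from the unramified twist of Steinberg case (in which $\rhobar|_{G_L}$ is forced to split), followed by an appeal to Theorem~\ref{thm:weight_shape}.
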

In other words we prove the weight part of Serre's Conjecture holds
in this case for weights in $W_\expl(\rhobar|_{G_L}^\ssimp)$.
We will prove this theorem in \S\ref{subsec:proofs} along with our
main results in the local setting stated in \S\ref{subsec:local
  setting}.

\begin{rem}
  \label{rem:three}
  When $p=3$, the hypothesis that $\rhobar$ is generic implies that
  $e' = 1$.  Since the weight part of
  Serre's conjecture in the unramified case (i.e.~the Buzzard--Diamond--Jarvis
  conjecture) is already known in full \cite{blggU2,GLS2,GeeKisin},
  Theorem~\ref{thm:mainglobalresult} provides new information only
  when $p > 3$.
\end{rem}

\subsection{The local setting}
\label{subsec:local setting}
We will now revert to the setting of \S\ref{sec:weights-types}, where
$\rhobar: G_L \to \GL_2(k_E)$ is a reducible representation, written as
$$\rhobar \simeq  \left(\begin{array}{cc} \chi_2 & * \\ 0 & \chi_1 \end{array}\right);$$
moreover we assume $\rhobar$ is generic.

Suppose now that $\mu$ is a Serre weight in $W$ in the notation of \S\ref{subsec:partition}.
Recall that $W$ is a subset of $W_\expl(\rhobar^\ssimp)$ with complement of
cardinality at most $2$, and that $\mu = \mu(J,d)$ for some $(J,d)$ satisfying  (\ref{eq:Schein_weights}), where $J \subseteq \{0,\ldots,f-1\}$
and $d = (d_0,\ldots,d_{f-1})$ with $0\le d_i \le e'-1$ if $i\in J$, $1\le d_i \le e'$
if $i \not\in J$.

Now choose a lift $\tilde{\sigma}_i: L \into E$ of $\sigma_i$ for each $i \in \{0,\ldots,f-1\}$
and a subset $\tilde{J} \subseteq \{\sigma: L \into E\}$ such that
\begin{itemize}
\item $\tilde{\sigma}_i \in \tilde{J}$ if and only if $i \in J$, and
\item $\{\,\sigma\in \tilde{J}\,:\, \mbox{$\sigma$ is a lift of $\sigma_i$}\,\}$ has cardinality $e' - d_i$.
\end{itemize}
Choose also a crystalline character $\tilde{\chi}_1:G_L \to E^\times$ lifting $\chi_1$
whose Hodge--Tate module $V_1$ has $\sigma$-labelled weights:
\begin{itemize}
\item $1$, if $\sigma\not\in \tilde{J}$ and $\sigma\not\in\{\,\tilde{\sigma}_i\,:\,i = 0,\ldots,f-1\,\}$;
\item $0$, if $\sigma\in \tilde{J}$ and $\sigma\not\in\{\,\tilde{\sigma}_i\,:\,i = 0,\ldots,f-1\,\}$;
\item $m_i+n_i+1$, if $\sigma = \tilde{\sigma}_i \not \in J$;
\item $m_i$, if $\sigma = \tilde{\sigma}_i \in J$.
\end{itemize}
That such a crystalline character exists follows for example from 
Lubin--Tate theory, or from \cite[Prop.~B.3]{conradlifting}; 
moreover such a character
is unique up to an unramified twist with trivial reduction.
Similarly, let $\tilde{\chi}_2:G_L \to E^\times$ be a lift of $\chi_2$
whose Hodge--Tate module $V_2$ has $\sigma$-labelled weights:
\begin{itemize}
\item $0$, if $\sigma\not\in \tilde{J}$ and $\sigma\not\in\{\,\tilde{\sigma}_i\,:\,i = 0,\ldots,f-1\,\}$;
\item $1$, if $\sigma\in \tilde{J}$ and $\sigma\not\in\{\,\tilde{\sigma}_i\,:\,i = 0,\ldots,f-1\,\}$;
\item $m_i$, if $\sigma = \tilde{\sigma}_i \not \in J$;
\item $m_i+n_i+1$, if $\sigma = \tilde{\sigma}_i \in J$.
\end{itemize}
Note that $V_1 \oplus V_2$ is a Hodge--Tate module lifting $\mu$.

We let $L_{\cris,E}(\tilde{\chi}_1,\tilde{\chi}_2)$ denote the subspace 
of $\Ext^1_{E[G_L]}(\tilde{\chi}_1,\tilde{\chi}_2)$ corresponding to
the set of extensions which are crystalline.  We let
$L_{\cris,\cO_E}(\tilde{\chi}_1,\tilde{\chi}_2)$ denote the preimage of
$L_{\cris,E}(\tilde{\chi}_1,\tilde{\chi}_2)$ in
$\Ext^1_{\cO_E[G_L]}(\tilde{\chi}_1,\tilde{\chi}_2)$, and let
$L_{\cris,k_E}(\tilde{\chi}_1,\tilde{\chi}_2)$ denote the image of
$L_{\cris,\cO_E}(\tilde{\chi}_1,\tilde{\chi}_2)$ in
$\Ext^1_{k_E[G_L]}(\chi_1,\chi_2)$.  Thus 
$L_{\cris,k_E}(\tilde{\chi}_1,\tilde{\chi}_2)$ consists of the set of
extensions arising as reductions of crystalline representations of the form
$\left(\begin{array}{cc} \tilde{\chi}_2 & * \\ 0 & \tilde{\chi}_1 \end{array}\right)$.

Recall that we have defined a partition of $W$ into
subsets $W_a$ indexed by $f$-tuples $(a_0,a_1,\ldots,a_{f-1})$ with $0 \le a_i \le e'$
for all $i$ (Proposition~\ref{prop:partition}).  Our main result comparing
reductions of crystalline and potentially Barsotti--Tate extensions is the
following.
\begin{thm} \label{thm:flat_vs_crys}  
If $\mu \in W_a$, then 
$$L_{\cris,k_E}(\tilde{\chi}_1,\tilde{\chi}_2) = L(\chi_1,\chi_2,\tau_a).$$
\end{thm}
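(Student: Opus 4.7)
The plan is to prove both inclusions of the equality $L_{\cris, k_E}(\tilde{\chi}_1, \tilde{\chi}_2) = L(\chi_1, \chi_2, \tau_a)$ separately, following the global-to-local strategy sketched in the introduction.

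First I would establish $L_{\cris, k_E}(\tilde{\chi}_1, \tilde{\chi}_2) \subseteq L(\chi_1, \chi_2, \tau_a)$ via a globalization argument. Let $c$ be a class in $L_{\cris, k_E}(\tilde{\chi}_1, \tilde{\chi}_2)$ and let the reducible representation $\rhobar: G_L \to \GL_2(k_E)$ realize the corresponding extension of $\chi_1$ by $\chi_2$. By hypothesis $\mu \in W_{\expl}(\rhobar)$. Choose a totally real field $F$ with a place $v$ satisfying $F_v \simeq L$, together with an automorphic $\rhobar^{\mathrm{glob}}: G_F \to \GL_2(k_E)$ whose restriction to $G_{F_v}$ realizes $\rhobar$, arranged so that the hypotheses of Theorem~\ref{thm:GeeKisin} are satisfied. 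Theorem~\ref{thm:GeeKisin}(2) then yields $\mu \in W^v_{\modular}(\rhobar^{\mathrm{glob}})$. Since $\mu$ is a Jordan--H\"older constituent of $\thetabar_a$, a standard argument (lifting a mod-$p$ automorphic eigenform of weight $\mu$ to a characteristic-zero eigenform on a quaternionic Shimura variety with level at $v$ encoding $\theta_a$, then applying local-global compatibility) produces a lift $\rho_0: G_F \to \GL_2(\cO_E)$ of $\rhobar^{\mathrm{glob}}$ such that $\rho_0|_{G_{F_v}}$ is potentially Barsotti--Tate of inertial type $\tau_a$. This restriction is classified by a Breuil module of type $\tau_a$ whose generic fibre realizes the extension $\rhobar$, so $c \in L(\chi_1, \chi_2, \tau_a)$ by the discussion following Definition~\ref{defn:lflat}.

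For the reverse inclusion I would compare dimensions. Theorem~\ref{thm:intersections-redux}(1) gives $\dim_{k_E} L(\chi_1, \chi_2, \tau_a) = \sum_{i=0}^{f-1} (e' - a_i)$. On the crystalline side, one may compute $\dim_{k_E} L_{\cris, k_E}(\tilde{\chi}_1, \tilde{\chi}_2)$ via a Bloch--Kato style computation together with the explicit Hodge--Tate weights of $\tilde{\chi}_1 \tilde{\chi}_2^{-1}$ prescribed by $\mu$. The genericity of $\rhobar$ avoids exceptional degeneracies and yields the same value $\sum_{i=0}^{f-1} (e' - a_i)$. Combined with the first inclusion, this forces the desired equality.

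The main obstacle is the production of the potentially Barsotti--Tate lift of type $\tau_a$: given modularity of $\rhobar^{\mathrm{glob}}$ of weight $\mu \in W_a$, one must construct a characteristic-zero automorphic eigenform of appropriate level at $v$ reflecting the $\GL_2(\cO_L)$-type $\theta_a$, whose associated Galois representation has the desired inertial type at $v$. This requires care via Jacquet--Langlands, classical modularity lifting results, and local-global compatibility; the dimension computation in the second step is more elementary but relies on the genericity hypothesis.
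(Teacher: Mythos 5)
Your proposal is correct and follows essentially the same strategy as the paper's proof, differing only in the order of presentation: you establish the inclusion $L_{\cris,k_E}(\tilde\chi_1,\tilde\chi_2) \subseteq L(\chi_1,\chi_2,\tau_a)$ first and then equalize dimensions, whereas the paper computes both dimensions first (via Nekov\'a\v{r}'s formula $\dim_E L_{\cris,E} = \dim_E(V/\Fil^0 V) = \sum_i (e'-d_i)$, torsion-freeness of $\Ext^1_{\cO_E[G_L]}$ from genericity, and Proposition~\ref{prop:partition}(3) to convert $\sum d_i$ to $\sum a_i$) to reduce to proving the same single inclusion. The globalization input you cite is exactly the paper's use of \cite[Cor.~A.3]{GeeKisin} together with Theorem~\ref{thm:GeeKisin}(2), and the ``standard argument'' you sketch for producing a potentially Barsotti--Tate lift of type $\tau_a$ and extracting the flat model is precisely the content of Lemma~\ref{lem:pBT_lifts} (which the paper isolates, resting on \cite[Prop.~2.10]{bdj}, local--global compatibility, and \cite[Cor.~5.2]{MR2822861}).
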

\begin{rem}\label{rem:choice-indepedence}
Note in particular that not only is $L_{\cris,k_E}(\tilde{\chi}_1,\tilde{\chi}_2)$
independent of the weight $\mu \in W_a$, but also of the various choices of lifts
$\tilde{\sigma}_i$, $\tilde{J}$, $\tilde{\chi}_1$ and $\tilde{\chi}_2$.
\end{rem}

Next we state the main result on the possible forms of $W_\expl(\rhobar)$.
Recall that the partition of $W$ into the $W_a$ extends to a partition of
$W_\expl(\rhobar^\ssimp)$ into subsets $W_a'$ defined in \S\ref{subsec:partition}.
To treat the case that $\rhobar$ is equivalent to a representation of the form 
$\chi_1 \otimes \left(\begin{array}{cc} \epsilonbar & * \\ 0 & 1 \end{array}\right)$,
recall that such a representation is {\em tr\`es ramifi\'ee} if the splitting field
of its projective image is {\em not} of the form $L(\alpha_1^{1/p},\ldots,\alpha_s^{1/p})$
for some $\alpha_1,\ldots,\alpha_s \in \cO_L^\times$.

\begin{thm} \label{thm:weight_shape}  We have 
$$W_\expl(\rhobar) =  \coprod_{a \le a^{\max}} W'_a$$
for some $a^{\max} \in A$ depending on $\rhobar$, unless
$\rhobar$ is tr\`es ramifi\'ee, in which case 
$W_\expl(\rhobar) = \{\mu_{m,n}\}$ where $\chi_1|_{I_L} = \prod_{i=0}^{f-1}\omega_i^{m_i}$
and $n = (p-1,\ldots,p-1)$.
\end{thm}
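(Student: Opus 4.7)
The plan is to combine Theorem~\ref{thm:flat_vs_crys}, which translates membership in $W_\expl(\rhobar)$ into membership in the extension spaces $L(\chi_1,\chi_2,\tau_a)$, with the lattice structure of these spaces from Theorem~\ref{thm:intersections-redux}, and then handle the two exceptional weights $\mu'(J,d)$ separately.

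By Theorem~\ref{thm:flat_vs_crys}, for each $\mu \in W_a$,
$$\mu \in W_\expl(\rhobar) \iff c_{\rhobar} \in L(\chi_1,\chi_2,\tau_a).$$
Set $S := \{\,a\in A : c_{\rhobar} \in L(\chi_1,\chi_2,\tau_a)\,\}$. Theorem~\ref{thm:intersections-redux}(2) gives $L_a \cap L_{a'} = L_{a''}$ with $a''_i = \max(a_i,a'_i)$, whence $L_a \subseteq L_{a'}$ whenever $a \ge a'$. Therefore $S$ is down-closed and closed under componentwise maximum, so if $S$ is nonempty it has a unique maximum $a^{\max}$ with $S = \{a \le a^{\max}\}$, yielding $W_\expl(\rhobar) \cap W = \coprod_{a \le a^{\max}} W_a$. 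Outside the two exceptional cases of Proposition~\ref{prop:partition}, we have $W = W'$ and $S$ is automatically nonempty (since $\dim L_0 = e'f = \dim H^1(G_L,k_E(\chi_2\chi_1^{-1}))$ in the non-cyclotomic setting), completing the proof.

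The remaining task is to analyse the exceptional weights $\mu'(J,d)$, which occur only in (i) $b=0$, $J=\{0,\ldots,f-1\}$, $d=0$, where $\chi_2\chi_1^{-1}|_{I_L}=\epsilonbar|_{I_L}$ and $\mu' \in W'_0$; or (ii) $b_i=p-1-2e'$, $J=\varnothing$, $d=(e',\ldots,e')$, where $\chi_2\chi_1^{-1}|_{I_L}=\epsilonbar^{-1}|_{I_L}$ and $\mu' \in W'_{(e',\ldots,e')}$. In each case $\mu'$ has $n_i = p-1$ for all $i$, so the lift has Hodge--Tate gap $p$ at the distinguished embeddings; a direct computation using the Lubin--Tate formula for reduction of a crystalline character shows that the inertia-matching constraint $\tilde{\chi}_1|_{I_L}=\chi_1|_{I_L}$ uniquely determines (up to unramified twist) the HT-weight assignment between $\tilde{\chi}_1$ and $\tilde{\chi}_2$. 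In case (ii) this forces $\tilde{\chi}_2\tilde{\chi}_1^{-1}$ to have all negative HT weights, so Bloch--Kato gives $\dim H^1_f = 0$, and $\mu' \in W_\expl(\rhobar)$ iff $c_{\rhobar}=0 \in L_{(e',\ldots,e')}$, i.e.~iff $(e',\ldots,e') \in S$. In case (i) the character $\tilde{\chi}_2\tilde{\chi}_1^{-1}$ has HT weights $(p,1,\ldots,1)$, and by varying unramified twists of $\tilde{\chi}_1$ and $\tilde{\chi}_2$ one shows---using the flexibility afforded by the HT weight $p$---that $\mu' \in W_\expl(\rhobar)$ for every $\rhobar$ in case (i), whether peu or tr\`es ramifi\'ee.

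Combining: if $S \ne \varnothing$ then $W_\expl(\rhobar) = \coprod_{a \le a^{\max}} W'_a$, with each exceptional $\mu'$ lying in the appropriate $W'_a$ precisely when the analysis of the previous paragraph matches $a \le a^{\max}$; and if $S = \varnothing$ this can only happen in case (i) with $\rhobar$ tr\`es ramifi\'ee (since $0 \in S$ otherwise), in which case $W_\expl(\rhobar) \cap W = \varnothing$ while $\mu' \in W_\expl(\rhobar)$ still holds, giving $W_\expl(\rhobar) = \{\mu'\}$. The main obstacle, as I see it, is the construction of Steinberg-type crystalline lifts in case (i) with $\rhobar$ tr\`es ramifi\'ee---this goes beyond the Breuil module framework of Sections~\ref{sec:extensions-Breuil-modules}--\ref{sec:models of principal series type} and requires explicit Hodge-theoretic computations (e.g.~via Fontaine--Laffaille modules or integral $p$-adic Hodge theory) to show that the unramified-twist freedom moves the image of $H^1_f$ off the peu ramifi\'ees subspace, as well as the parallel verification in case (ii) that the uniqueness of the HT-weight assignment genuinely rules out every non-split $\rhobar$.
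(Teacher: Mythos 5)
Your proposal follows the same overall strategy as the paper: translate via Theorem~\ref{thm:flat_vs_crys} into membership of $c_{\rhobar}$ in the spaces $L(\chi_1,\chi_2,\tau_a)$, use the lattice structure from Theorem~\ref{thm:intersections-redux}(2) to extract $a^{\max}$ and obtain $W_\expl(\rhobar)\cap W=\coprod_{a\le a^{\max}}W_a$, and then treat the exceptional weights $\mu'(J,d)$ by computing $\dim L_{\cris,k_E}$. Your analysis of case (ii), where the Hodge--Tate weights of $\tilde\chi_2\tilde\chi_1^{-1}$ are all negative so $L_{\cris,k_E}=\{0\}$ and hence $\mu'(J,d)\in W_\expl(\rhobar)$ iff $\rhobar$ splits iff $(e',\ldots,e')\le a^{\max}$, is precisely the paper's argument.

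There are however two gaps, one flagged and one not. The unflagged one is the parenthetical ``since $0\in S$ otherwise'': when $\chi_2=\chi_1\epsilonbar$ exactly one has $\dim\Ext^1_{k_E[G_L]}(\chi_1,\chi_2)=e'f+1$ but $\dim L(\chi_1,\chi_2,\tau_{(0,\ldots,0)})=e'f$, so $L_0$ is a codimension-one subspace and you must identify it with the peu ramifi\'ee subspace before you can conclude that $S=\varnothing$ is equivalent to $\rhobar$ being tr\`es ramifi\'ee. The paper does this via Kummer theory together with (the generalization to $L$ of) \cite[Prop.~8.2]{Edixhoven}, showing that the classes coming from models of the trivial type $\tau_{(0,\ldots,0)}$ correspond to $\cO_L^\times/(\cO_L^\times)^p\otimes k_E$; without that identification the tr\`es ramifi\'ee dichotomy is not established. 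The gap you do flag — showing that in case (i) with $\chi_1=\chi_2\epsilonbar$ the codimension-one spaces $L_{\cris,k_E}(\tilde\chi_1,\tilde\chi_2)$ sweep out all of $\Ext^1_{k_E[G_L]}(\chi_1,\chi_2)$ as the lifts vary, so that $\mu'(J,d)\in W_\expl(\rhobar)$ even for tr\`es ramifi\'ee $\rhobar$ — is resolved in the paper by citing \cite[Prop.~5.2.9]{GLS11}, an explicit integral $p$-adic Hodge theory computation of exactly the sort you anticipated.
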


\begin{rem}\label{rem:weight_shape}
It will also be clear from the proof that given any pair of characters $\chi_1$, $\chi_2$
such that $\chi_1 \oplus \chi_2$ is generic, every element of $A$ arises as $a^{\max}$
for some peu ramifi\'ee extension of $\chi_1$ by $\chi_2$.   The theorem therefore
completely determines the possible values of $W_\expl(\rhobar)$ for generic
$\rhobar$.  As indicated in the footnote after Definition~\ref{defn:generic},
we expect a similar description to be valid under hypotheses weaker than genericity,
but not in full generality; see Section~5.
\end{rem}

\subsection{The proofs}
\label{subsec:proofs}
In this section we will prove Theorems~\ref{thm:mainglobalresult}, 
\ref{thm:flat_vs_crys} and~\ref{thm:weight_shape}, but first we note
the following lemma.
\begin{lem}\label{lem:pBT_lifts}
Suppose that $\rhobar:G_F \to \GL_2(k_E)$ is as in 
Theorem~\ref{thm:GeeKisin} 
with $\rhobar|_{G_L}\simeq  \left(\begin{array}{cc} \chi_2 & * \\ 0 &
    \chi_1 \end{array}\right)$, and that $\tau$ is a principal series type. 
If $W^v_\modular(\rhobar)$ contains a Jordan--H\"older factor of
$\overline{\theta}_{\tau}$,
then the extension defined by
$\rhobar|_{G_L}$ is in $L(\chi_1,\chi_2,\tau)$.
\end{lem}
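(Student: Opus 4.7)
The plan is to exhibit a characteristic-zero automorphic lift of $\rhobar$ whose associated Galois representation is potentially Barsotti--Tate at $v$ with inertial type $\tau$; reducing the restriction to $G_L$ modulo $p$ and applying the equivalence between finite flat $\cO_E$-group schemes over $\cO_K$ with descent data from $K$ to $L$ (where $K$ is a tame extension trivializing $\tau$) and $\BrMods$ will yield a rank-two Breuil module $\cP$ with generic fibre $\rhobar|_{G_L}$. A lattice-and-scheme-theoretic-closure argument applied to the sub-representation $\chi_2 \subset \rhobar|_{G_L}$ then produces a short exact sequence $0 \to \M_2 \to \cP \to \M_1 \to 0$ in $\BrMods$, whose rank-one terms have generic fibres $\chi_2,\chi_1$ respectively. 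The descent-data exponents $c_i,d_i$ appearing in Lemma~\ref{lem:rank-one} must be compatible with $\tau \simeq \lambda \oplus \lambda'$ acting semisimply on $\rho_\pi|_{I_L}$, which forces $\{\sigma_i\circ\etabar^{c_i},\sigma_i\circ\etabar^{d_i}\} = \{\lambda,\lambda'\}$ for all $i$ and makes $\M_1,\M_2$ into models of type $\tau$ in the sense of Definition~\ref{defn:model-of-type-tau}. Consequently the class of $\rhobar|_{G_L}$ lies in $L(\chi_1,\chi_2,\tau)$ by Definition~\ref{defn:lflat}.

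To produce the lift, I would run a standard lattice/Hecke-algebra argument. Since $\mu$ is a Jordan--H\"older constituent of $\overline{\theta}_\tau$, there is a $\GL_2(\cO_L)$-stable $\cO_E$-lattice $\theta^{\circ} \subset \theta_\tau$ whose reduction has $\mu$ as its cosocle. Modularity of $\mu$ gives non-vanishing of $\Hom_{k_E[\GL_2(\kv)]}(\mu, S^D(U_vU^v)[\m_\rhobar^{\Sigma_U}])$ for some $U^v$, and Frobenius reciprocity combined with Nakayama's lemma propagates this to non-vanishing of the $\Hom$-space of $\theta^{\circ}$ into the corresponding space of $\cO_E$-valued algebraic modular forms of level $U^v$ localised at $\m_\rhobar^{\Sigma_U}$, first modulo $\varpi$ and then (by flatness) in characteristic zero. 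Selecting a Hecke eigenform yields a cuspidal automorphic representation $\pi$ of $\GL_2(\A_F)$ of parallel weight $2$ with $\overline{\rho}_\pi \simeq \rhobar$ and $\Hom_{\GL_2(\cO_L)}(\theta_\tau,\pi_v) \neq 0$. By inertial local Langlands, $\pi_v$ has Weil--Deligne inertial type $\tau$, and since $\pi$ has parallel weight $2$ the representation $\rho_\pi|_{G_L}$ is then potentially Barsotti--Tate with inertial type $\tau$, as required.

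The main obstacle is the lifting step: propagating modularity of a Serre weight $\mu$ that appears merely as some Jordan--H\"older factor (not necessarily the cosocle of the full integral type) of $\overline{\theta}_\tau$ to modularity of a characteristic-zero form of type $\tau$. Arguments of this shape are by now standard within the BDJ/Gee--Kisin framework (cf.\ \cite{geeBDJ, GeeKisin}), and the hypotheses of Theorem~\ref{thm:GeeKisin}, together with our running assumption that $D$ splits at every prime dividing $p$, place us precisely within the setting to which those techniques apply. The subsequent descent from the characteristic-zero potentially Barsotti--Tate lift to a Breuil-module model of type $\tau$ is then a routine application of the theory developed in Sections~\ref{sec:extensions-Breuil-modules} and~\ref{sec:models of principal series type}, together with the observation (immediate from Lemma~\ref{lem:rank-one} and the classification of rank-one models) that the inertial type of the generic fibre of a rank-one object of $\BrMods$ is read off from its descent-data exponents $c_i$.
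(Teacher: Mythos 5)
Your proposal follows essentially the same route as the paper's proof: produce a parallel-weight-two automorphic lift $\pi$ with $\pi_v$ of $K$-type $\theta_\tau$, invoke local--global compatibility to conclude $\rho_\pi|_{G_L}$ is potentially Barsotti--Tate of inertial type $\tau$, and then pass to the associated Breuil module with descent data. The paper condenses your two substantive steps into two citations: the lifting step is precisely \cite[Prop.~2.10]{bdj} (with the remark that the argument there, stated for $r=1$ and $p$ unramified, carries over; cf.\ also \cite[Lem.~3.4]{geesavitttotallyramified}), and the passage from the pBT lift to a rank-two Breuil module with a filtration by models of type $\tau$ is \cite[Cor.~5.2]{MR2822861}, which you have unpacked via scheme-theoretic closure. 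The only point you omit is a small bookkeeping remark at the end of the paper's proof: since the lift may require enlarging $E$ to $E'$, one must note that $L(\chi_1,\chi_2,\tau)\otimes_{k_E}k_{E'}$ agrees with the analogous space over $k_{E'}$ (which follows e.g.\ from Theorem~\ref{thm:extensions}), so the conclusion is valid without extending scalars.
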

\begin{proof}  By \cite[Prop.~2.10]{bdj} (stated there only for $r=1$ and
$p$ unramified in $F$, but the proof carries over to our setting; see
also the proof of \cite[Lem.~3.4]{geesavitttotallyramified}), 
we have (replacing $E$ by an extension $E'$ if necessary) that $\rhobar \simeq \rhobar_\pi$ for some cuspidal holomorphic automorphic representation $\pi$ of $\GL_2(\A_F)$ such that $\pi_\infty$
is holomorphic of weight $(2,\ldots,2)$ with trivial central character and $\pi_v$ has
$K$-type $\theta_\tau$.  (Note that our normalizations for the local
Langlands correspondence differ from those of \cite{bdj};  in our case
$I(\psi_1\otimes\psi_2)$ corresponds to
$|\cdot|^{1/2}(\psi_1\oplus\psi_2)$.)

 Local--global compatibility at $v$ of the Langlands 
correspondence (see the Corollary in the introduction to \cite{MR2373358}) 
 therefore implies that
$\rho_\pi|_{G_L}$ is potentially Barsotti--Tate with associated Weil--Deligne
representation of type $\tau$.  (Note that when $\tau$ is scalar, by
definition $\theta_\tau$ is not a twist of the Steinberg representation.)   It follows from~\cite[Cor.~5.2]{MR2822861} that $\rhobar|_{G_L}$ is the generic fibre
of a model of type $\tau$ 
in the sense of 
Definition~\ref{defn:model-of-type-tau},  and hence that its associated
extension class is in $L(\chi_1,\chi_2,\tau)$.  Note furthermore that
replacing $E$ by $E'$ has the effect of replacing $L(\chi_1,\chi_2,\tau)$
by $L(\chi_1,\chi_2,\tau)\otimes_{k_E}k_{E'}$ (for example as an
application of 
Theorem~\ref{thm:extensions}), so that the conclusion holds
without having extended scalars.
\end{proof}

\noindent{\em Proof of Theorem~\ref{thm:flat_vs_crys}.}
Since $\tilde{\chi}_1$ and $\tilde{\chi}_2$ are distinct characters,
it follows from \cite[Prop.~1.24(2)]{nekovar} that
$$\dim_E L_{\cris,E}(\tilde{\chi}_1,\tilde{\chi_2}) = \dim_E(V/\Fil^0(V)),$$
where $V$ is the Hodge--Tate module $\Hom_E(V_1,V_2)$. 
 Note that $\dim_E(V/\Fil^0(V))$ is simply the number
of $\sigma:L \into E$  such that the $\sigma$-labelled Hodge--Tate weight of
$V_2$ is greater than that of $V_1$, which is the case if and only if 
$\sigma \in \tilde{J}$.  Therefore
$$\dim_E L_{\cris,E}(\tilde{\chi}_1,\tilde{\chi_2}) = | \tilde{J}|
  = \sum_{i=0}^{f-1} (e' - d_i).$$

The genericity hypothesis implies that $\chi_1 \neq \chi_2$, from which
it follows that $\Ext^1_{\cO_E[G_L]}(\tilde{\chi}_1,\tilde{\chi}_2)$ is torsion-free.
Therefore $L_{\cris,\cO_E}(\tilde{\chi}_1,\tilde{\chi}_2)$ is torsion-free over
$\cO_E$ of rank $\dim_E L_{\cris,E}(\tilde{\chi}_1,\tilde{\chi_2})$, and it follows that
$$\dim_{k_E} L_{\cris,k_E}(\tilde{\chi}_1,\tilde{\chi}_2) = \sum_{i=0}^{f-1} (e' - d_i).$$
By Proposition~\ref{prop:partition}(3) and Theorem~\ref{thm:intersections-redux}(1),
this is the same as the dimension of $L(\chi_1,\chi_2,\tau_a)$, so it suffices to
prove that
$$L_{\cris,k_E}(\chi_1,\chi_2) \subseteq L(\chi_1,\chi_2,\tau_a).$$
Moreover since these subspaces of $\Ext^1_{k_E[G_L]}(\chi_1,\chi_2)$
are well-behaved under extension of scalars, we may enlarge $E$ in
order to prove the inclusion.

Suppose now that we are given a representation $\overline{\varrho}:G_L \to \GL_2(k_E)$
giving rise to an extension class in $L_{\cris,k_E}(\chi_1,\chi_2)$.
By \cite[Cor.~A.3]{GeeKisin} 
we have that $\overline{\varrho} \simeq \rhobar|_{G_L}$
for some totally real field $F$, representation $\rhobar:G_F \to \GL_2(k_E)$ and embedding
$F \into L$ such that Theorem~\ref{thm:GeeKisin} applies (enlarging $E$ if necessary).
Since  $\mu \in W_\expl(\overline{\varrho})$, Theorem~\ref{thm:GeeKisin} implies that
$\mu \in W^v_\modular(\rhobar)$, and hence Lemma~\ref{lem:pBT_lifts} 
implies that the extension defined by $\overline{\varrho}$ is in $L(\chi_1,\chi_2,\tau_a)$.
\hfill$\square$

\bigskip

\noindent{\em Proof of Theorem~\ref{thm:weight_shape}.}
From Theorem~\ref{thm:flat_vs_crys}, Remark~\ref{rem:choice-indepedence}, and the definitions of $W_\expl(\rhobar)$ and
 $L_{\cris,k_E}(\tilde{\chi}_1,\tilde{\chi}_2)$, we see that if $\mu \in W_a$, then 
 $\mu \in W_\expl(\rhobar)$ if and only if the extension
class associated to $\rhobar$ is in $L(\chi_1,\chi_2,\tau_a)$.
Let $A_{\rhobar}$ denote the set of $a\in A$ for which this holds, so that
$$W_\expl(\rhobar) \cap W = \coprod_{a\in A_{\rhobar}} W_a.$$

By Theorem~\ref{thm:intersections-redux}(1), we have
$\dim_{k_E}L(\chi_1,\chi_2,\tau_{(0,\ldots,0)})  = e'f$.
If $\chi_2 \neq \chi_1\epsilonbar$, then this is the same as the dimension of
$$\Ext^1_{k_E[G_L]}(\chi_1,\chi_2) \simeq  H^1(G_L,\chi_1^{-1}\chi_2),$$
so we have that $(0,\ldots,0) \in A_{\rhobar}$, and in particular $A_{\rhobar}$
is nonempty.  In the case that $\chi_2 = \chi_1\epsilonbar$, we have the isomorphism
$$\Ext^1_{k_E[G_L]}(\chi_1,\chi_2) \simeq L^\times/(L^\times)^p \otimes k_E$$
of Kummer theory.  Note that the genericity hypothesis implies that
$\zeta_p \not\in L$, so these spaces have dimension $e'f + 1$.
The subspace $\cO_L^\times/(\cO_L^\times)^p \otimes k_E$
has dimension $e'f$, and the corresponding classes arise as generic fibres
of models of type $\tau_{(0,\ldots,0)} = (\chi_1 \oplus
\chi_1)|_{I_L}$.  To see this,  twist by $\chi_1^{-1}$ to reduce to the case
where $\tau_{(0,\ldots,0)}$ is trivial, and then apply
 \cite[Prop.~8.2]{Edixhoven} (or
more precisely the analogous statement with~$L$ in place of $\Qp$,
which follows by the same proof).
Therefore $L(\chi_1,\chi_2,\tau_{(0,\ldots,0)})$ corresponds to 
$\cO_L^\times/(\cO_L^\times)^p \otimes k_E$, and 
$(0,\ldots,0) \in A_{\rhobar}$ if and only if $\rhobar$ is not
tr\`es ramifi\'ee.

Suppose now that $\rhobar$ is not tr\`es ramifi\'ee.    In particular $A_{\rhobar}$
is nonempty and Theorem~\ref{thm:intersections-redux}(2) implies that
$$\bigcap_{a\in A_{\rhobar}}  L(\chi_1,\chi_2,\tau_a) 
 = L(\chi_1,\chi_2,\tau_{a^{\max}})$$
 where $a_i^{\max} = \max_{a\in A_{\rhobar}} \{a_i\}$.  Moreover
$a \in A_{\rhobar}$ if and only $a \le a^{\max}$, so
$$W_\expl(\rhobar) \cap W = \coprod_{a \le a^{\max}} W_a.$$
On the other hand if $\rhobar$ is tr\`es ramifi\'ee, then we see that
$W_\expl(\rhobar) \cap W = \varnothing$.

To complete the proof of the theorem, we must treat the two possible
additional weights $\mu'(J,d)$ arising when 
$\chi_1^{-1}\chi_2|_{I_L} = \epsilonbar|^{\pm 1}_{I_L}$.

Note that the dimension calculations at the
beginning of the proof of Theorem~\ref{thm:flat_vs_crys} apply equally
with $n = (0,\ldots,0)$ replaced by $n = (p-1,\ldots,p-1)$. 
In the case
$\chi_1|_{I_L} = \chi_2\epsilonbar|_{I_L}$, $J = \varnothing$ and 
$d = (e',\ldots,e')$, this gives  $L_{\cris,k_E}(\tilde{\chi}_1,\tilde{\chi}_2) = \{0\}$,
so that 
$$\mu'(J,d) \in W_\expl(\rhobar)\quad \Leftrightarrow \quad\mbox{$\rhobar$ splits}\quad
\Leftrightarrow\quad \mu(J,d) \in W_\expl(\rhobar).$$
In the case
$\chi_2|_{I_L} = \chi_1\epsilonbar|_{I_L}$, $J = \{0,\ldots,f-1\}$ and $d = (0,\ldots,0)$,
we find that
$$\dim_{k_E}L_{\cris,k_E}(\tilde{\chi}_1,\tilde{\chi}_2) = e'f,$$
and we must show that $\mu'(J,d) \in W_\expl(\rhobar)$.
If $\chi_1 \neq \chi_2\epsilonbar$, then this holds since
$$L_{\cris,k_E}(\tilde{\chi}_1,\tilde{\chi}_2) = \Ext^1_{k_E[G_L]}(\chi_1,\chi_2).$$
If $\chi_1 = \chi_2\epsilonbar$, then we must show that every class in 
$$ \Ext^1_{k_E[G_L]}(\chi_1,\chi_2) \simeq H^1(G_L,k_E(\epsilonbar))$$
is in the codimension one subspace $L_{\cris,k_E}(\tilde{\chi}_1,\tilde{\chi}_2)$
for some choice of lifts $\tilde{\chi}_1$, $\tilde{\chi_2}$ as in \S\ref{subsec:local setting}
with $n = (p-1,\ldots,p-1)$, enlarging $E$ if necessary.  This follows
by exactly the same proof as that of
\cite[Prop.~5.2.9]{GLS11}.
\hfill$\square$

\bigskip

\noindent{\em Proof of Theorem~\ref{thm:mainglobalresult}.}
We must show that if $\mu \in W' \cap W^v_\modular(\rhobar)$, then 
$\mu \in W_\expl(\rhobar|_{G_L})$.  Suppose first that $\mu \in W_a$
for some $a \in A$.  By Lemma~\ref{lem:pBT_lifts}, we have that the
extension class associated to $\rhobar|_{G_L}$ is in $L(\chi_1,\chi_2,\tau_a)$,
hence in $L_{\cris,k_E}(\tilde{\chi}_1,\tilde{\chi}_2)$ by Theorem~\ref{thm:flat_vs_crys},
and therefore in $W_\expl(\rhobar|_{G_L})$.

Now we must deal with the two exceptional weights.  If
$\chi_2|_{I_L} = \chi_1\epsilonbar|_{I_L}$, then Theorem~\ref{thm:weight_shape}
 implies that $\mu'(J,d) \in W_\expl(\rhobar|_{G_L})$, where
 $J = \{0,\ldots,f-1\}$ and $d = (0,\ldots,0)$.
Finally suppose that $\chi_1|_{I_L} = \chi_2\epsilonbar|_{I_L}$
and that 
$\mu'(J,d) \in W^v_\modular(\rhobar)$, where
 $J = \varnothing$ and $d = (p-1,\ldots,p-1)$.
 In this case the same argument as in the proof of Lemma~\ref{lem:pBT_lifts}
 shows that $\rhobar \simeq \rhobar_\pi$ for some 
cuspidal holomorphic automorphic representation $\pi$ of $\GL_2(\A_F)$ such that $\pi_\infty$
is holomorphic of weight $(2,\ldots,2)$ with trivial central character and $\psi\otimes\pi_v$
has a vector invariant under $U_0(v)$, where $\psi = [\chi_2]^{-1}\circ\det$ and $[\chi_2]$
denotes the Teichm\"uller lift of $\chi_2$.
Therefore $\psi\otimes\pi_v$ is either an unramified principal series, or an unramified
twist of the Steinberg representation.  If $\psi\otimes\pi_v$ is unramified, then
in fact $\rhobar$ is modular of weight $\mu(J,d)$, so it follows
from the cases already proved that $\mu(J,d) \in W_\expl(\rhobar|_{G_L})$, and
hence $\mu'(J,d) \in W_\expl(\rhobar|_{G_L})$ by Theorem~\ref{thm:weight_shape}.  
(In fact we see from the proof of Theorem~\ref{thm:weight_shape} that in this
case $\rhobar|_{G_L}$ is split.)  If $\psi\otimes\pi_v$ is an unramified twist of
the Steinberg representation, then local-global compatibility at $v$ gives
that $\rho_\pi|_{G_L}$ is an unramified twist of a representation of the form
$[\chi_2]\otimes \left(\begin{array}{cc}\epsilon  & * \\ 0 & 1 \end{array}\right)$.
Since $\chi_1|_{I_L} \neq \chi_2|_{I_L}$, it follows that $\rhobar|_{G_L}$
is split, and hence that $\mu'(J,d) \in W_\expl(\rhobar|_{G_L})$
in this case as well.
\hfill$\square$

\section{A remark on the genericity hypothesis}
\label{sec:counterexample}

In this section we show that the genericity hypothesis 
on $\rhobar$ is, in general, necessary in order for our arguments in
the proof of Theorem~\ref{ithm:main-global} to
go through.   That is, we give an example of a field $L$, characters $\chi_1$,
$\chi_2 : G_L \to \Fpbar^{\times}$, and a weight $\mu$ such that the
subset  $L_{\textrm{cris}} \subseteq H^1(G_{L},\Fpbar(\chi_2
\chi_1^{-1}))$ corresponding to the representations $\rhobar$ with $\mu \in
W_{\expl}(\rhobar)$ is not equal to the space
$L(\chi_1,\chi_2,\tau)$ for any principal series type $\tau$ such that
$\mu$ is  a Jordan--H\"older constituent of $\overline{\theta}_\tau$.

Let $L = \Q_{p^2}$ be the unramified quadratic extension of $\Qp$, so
that $f = 2$ and $e' = 1$.  Take
$\chi_1$ to be trivial, and $\chi_2 = \chi$ to be  any extension to $G_L$ of
$\omega_0^{p-1} \omega_1^b$, where $b \in [1,p-2]$ is an integer.
Observe that the weight $\mu_{m,n}$ with
$$ m = (p-1,b-1),\qquad n = (p-1,p-b-1) $$
lies in $W_\expl(\rhobar^{\textrm{ss}})$.  One checks that $J = \{0\}$ is the only
subset $J \subseteq \{0,1\}$  such that $$\prod_{i \in J} \omega_i^{m_i +
  d_i} \prod_{i \not\in J} \omega_i^{m_i+n_i+d_i} = 1$$ with $d_i = 0$
if $i \in J$ and $d_i = 1$ otherwise.  It follows as in the proof of Theorem~\ref{thm:flat_vs_crys} that $\dim_{k_E}
L_{\textrm{cris},k_E}(\tilde{\chi}_1,\tilde{\chi}_2) = (1-1) + (1-0) = 1$,
where $\tilde{\chi}_1$ and $\tilde{\chi}_2$ are defined as in \S\ref{sec:mainsetting}.
By \cite[Rem.~7.13]{MR2776609}, the spaces $L_{\textrm{cris},k_E}(\tilde{\chi}_1,\tilde{\chi}_2)$ are independent of the choice of $\tilde{\chi}_1$, $\tilde{\chi}_2$, so in fact
$\dim_{k_E}L_{\textrm{cris}} = 1$.

On the other hand, one checks (e.g. from
\cite[Prop.~1.1]{DiamondDurham}) that there is exactly one principal
series type $\tau$ such that $\mu$ is a Jordan--H\"older constituent
of $\overline{\theta}_{\tau}$, namely 
$$ \tau \simeq \omega_0^{p-2} \omega_1^{p-1} \oplus \omega_0^{p-1}
\omega_1^{b-1}.$$
The weight $\mu_{m',n'}$ with $m' = (0,0)$ and $n'=(p-2,b-1)$ is also
a Jordan-H\"older constituent of $\overline{\theta}_\tau$ as well as
an element of $W_\expl(\rhobar^{\textrm{ss}})$;  moreover the 
space $L_{\textrm{cris},k_E}(\tilde{\chi}_1',\tilde{\chi}_2')$
corresponding to $\mu_{m',n'}$ has dimension $2$,
hence is equal to $\Ext^1_{k_E[G_L]}(\chi_1,\chi_2)$.
As in the proof of Theorem~\ref{thm:flat_vs_crys}, we have that
$L_{\textrm{cris},k_E}(\tilde{\chi}_1',\tilde{\chi}_2') \subseteq L(\chi_1,\chi_2,\tau)$,
so that  $L(\chi_1,\chi_2,\tau) = \Ext^1_{k_E[G_L]}(\chi_1,\chi_2)$ properly
contains $L_{\textrm{cris}}$.

We remark however that in the case $f=2$ and $e'=1$,
 Corollary~5.13 and Theorem~7.12 of \cite{MR2776609} yield
a partition of $W_{\expl}(\rhobar^{\textrm{ss}})$ into subsets $W_a'$ for
$a \in \{0,1\}^2$ such that Theorem~\ref{thm:weight_shape} holds even without
the genericity hypothesis.  Indeed in the example above (with $\chi_1^{-1}\chi_2|_{I_L} = 
\omega_0^{p-1}\omega_1^b$ for some $b\in [1,p-2]$), one even has that
each $W_a'$ is a singleton exactly as in the generic case.
On the other hand if $\chi_1^{-1}\chi_2|_{I_L} = \omega_1^b$
for some $b\in[1,p-1]$, then one of the two subsets $W_a'$ with $a_0 + a_1 = 1$
must be empty, while the other has cardinality two.

\bibliographystyle{plain} 
\bibliography{refs} 

\end{document}